\newtheorem{theorem}{Theorem}[section]
\newtheorem{proposition}[theorem]{Proposition}
\newtheorem{corollary}[theorem]{Corollary}
\newtheorem{lemma}[theorem]{Lemma}
\newtheorem{remark}[theorem]{Remark}
\newcommand{\cali}[1]{\mathscr{#1}}
\newcommand{\supp}{{\rm supp}}
\newcommand{\dist}{{\rm dist}}
\newcommand{\ddc}{{dd^c}}
\newcommand{\dc}{{d^c}}
\newcommand{\dbar}{{\overline\partial}}
\newcommand{\ddbar}{{\partial\overline\partial}}
\newcommand{\Dom}{{\rm Dom}}
\renewcommand{\div}{{\rm div}}
\renewcommand{\Re}{{\rm Re}}
\newcommand{\Cc}{\cali{C}}
\newcommand{\Dc}{\cali{D}}
\newcommand{\Ec}{\cali{E}}
\newcommand{\Fc}{\cali{F}}
\newcommand{\Gc}{\cali{G}}
\newcommand{\Hc}{\cali{H}}
\newcommand{\Lc}{\cali{L}}
\newcommand{\C}{\mathbb{C}}
\newcommand{\D}{\mathbb{D}}
\newcommand{\N}{\mathbb{N}}
\newcommand{\R}{\mathbb{R}}
\newcommand{\T}{\mathbb{T}}
\newcommand{\B}{\mathbb{B}}
\newcommand{\U}{\mathbb{U}}
\renewcommand{\S}{\mathbb{S}}
\renewcommand{\P}{\mathbb{P}}
\title{Heat equation and ergodic theorems for Riemann surface
  laminations}
\author{T.-C. Dinh, V.-A. Nguy{\^e}n and N. Sibony}
\begin{document}

\maketitle

\begin{abstract}
We introduce the heat equation relative to a positive $\ddbar$-closed current
and apply it to the invariant currents associated with Riemann surface
laminations possibly with singularities. The main examples are holomorphic foliations by Riemann surfaces in  projective spaces. We prove two kinds of
ergodic theorems for such currents: one associated to the heat
diffusion and one close to Birkhoff's averaging on orbits of a
dynamical system. The heat diffusion theorem with respect to a harmonic measure 
is also developed for real laminations.
\end{abstract}

\noindent
{\bf Classification AMS 2010:} 37F75, 37A.

\noindent
{\bf Keywords:} lamination, heat equation, ergodic theorem, Poincar{\'e} metric.

\bigskip
\noindent
{\bf Notation.} Throughout the paper, $\D$ denotes the unit disc
in $\C$, $r\D$ denotes the disc of center 0 and of radius $r$ and
$\D_R\subset\D$ is the disc of center 0 and of radius $R$ with
respect to the Poincar{\'e} metric on $\D$,
i.e. $\D_R=r\D$ with $R:=\log[(1+r)/(1-r)]$. Poincar{\'e}'s metric on
a Riemann surface, in particular on $\D$ and on the leaves of a lamination, is
given by a positive $(1,1)$-form that we denote by $\omega_P$. 
The notation $\U\simeq \B\times\T$ is a flow box which is often
identified with an open set of the lamination. Here, $\T$ is a transversal
and $\B$ is an open set in $\R^n$ for real laminations or in $\C$ for
Riemann surface laminations.

\section{Introduction} \label{intro}

When  $\Fc$ is  a smooth foliation  of a compact Riemannian manifold $(M,g)$  with  
smooth leaves (or more generally a lamination by Riemannian
manifolds), L.  Garnett  \cite{Garnett}  has 
studied  a diffusion process on the leaves of the foliation. 
The metric  $g$ induces
a Laplace operator $\Delta$ along the leaves and the diffusion  process  is  associated to  the  heat  equation
$$\frac{du}{dt}-\Delta u=0\quad \mbox{and} \quad 
u(0,\cdot)=u_0.$$
Since  the  leaves  have  bounded  geometry, the  classical theory, based on Malliavin \cite{Malliavin} and McKean \cite{McKean} analytic estimates,
applies and  
one can study the  diffusion process  associated to that  equation.
L.  Garnett  proved also  an ergodic theorem  for the semi-group
$S(t)$  of diffusion operators
associated to the heat equation.  Recall that a 
positive harmonic measure for $\Fc$ is  a positive  measure $m$
such that   $ \langle \Delta u,m\rangle=0$
for all smooth  functions $u$.  It can be decomposed in a flow box 
as an average of measures on plaques which are given by harmonic
forms. 
We refer to Candel-Conlon \cite{Candel2,CandelConlon2}   
and Walczak \cite{Walczak} for a more recent treatment of L. Garnett's
theory. 
Their approach relies on uniform estimates of the heat kernel 
using that the leaves have bounded geometry.

The theory does not apply to Riemann surface laminations with singularities,
e.g. to the study of polynomial  vector fields in $\C^{k+1},$  
for which we can associate a foliation in
$\P^k$.  More  precisely,  let  
$$F(z):=\sum_{j=0}^k  F_j(z) \frac{\partial}{\partial z_j} $$
with $F_j$ homogeneous polynomials of degree $d \geq 1.$  It induces  a  
foliation with singularities  in $\P^k.$  The  singularities correspond either to
indeterminacy points of  $F= [F_0  : \cdots : F_k ]$ or to fixed
points of $F$ in $\P^k$.  
In general, the leaves  are  not of bounded  geometry nor even complete for the induced metric. They have bounded geometry with respect to the Poincar{\'e} metric on the leaves that we will consider, but then the metric is not in general transversally continuous. For general results on foliations in $\P^k$, see the book by Ilyashenko-Yakovenko \cite{IlyashenkoYakovenko} which focuses on dimension 2. The survey by Forn\ae ss-Sibony \cite{FornaessSibony2} emphasizes the use of currents.

In this paper we  construct  the heat diffusion in a slightly
different context.  
We  consider laminations by Riemann surfaces with singularities
in a compact hermitian   manifold or abstract laminations by
Riemannian leaves without singularities.   
In the real case, assume that we have a  Laplacian  $\Delta$ along leaves  such that
for a test function $u$,  regular enough,     
$\Delta u$  is  continuous. Then    an application of the  
Hahn-Banach theorem permits to obtain a
harmonic measure $m,$ 
see e.g. Garnett \cite{Garnett}. 
In the complex case with singularities, the construction of $m$ is
different, see \cite{BerndtssonSibony}. One has to use the notion of
plurisubharmonic functions which have the property to be subharmonic
on every leaf independently of the lamination.  One can also use an averaging process as in \cite{FornaessSibony1}.  

When  a  harmonic  measure is  given  we  can
consider 
other natural Laplacians, which vary only measurably and $m$  is still harmonic with respect to
these  Laplacians.  We then introduce  a heat equation associated to $m.$
We can develop a  Hilbert space  theory  with respect to   that
equation 
using Lax-Milgram  and  Hille-Yosida  theorems.  More precisely, given
$u_0$ in the domain $\Dom(\Delta)$ of $\Delta$,
we solve 
$$\frac{du}{dt}=\Delta u\quad \mbox{and}\quad
  u(0,\cdot)=u_0$$
with $u(t,\cdot)\in \Dom(\Delta)$. 
The theory is  sufficient to get an ergodic theorem for  that diffusion. So, we rather get the heat equation in the space $(M,\Fc,m)$. The Laplacians are not necessarily symmetric operators in $L^2(m)$ and the natural ones depend on $m$.

We give a self-contained proof of the ergodic theorem in the Riemannian case without any use of bounded geometry nor delicate estimates on the heat kernel. This will permit further generalizations. 
We get also the mixing for the diffusion associated to a natural Laplacian with coefficients defined only $m$-almost everywhere.
We then apply the same ideas to the complex case with singularities. In some sense, we treat harmonic measures and $\ddc$-closed currents as manifolds and we solve the heat equation relatively to those measures and currents. The case of $\dbar$-equation induced on a current was studied by Berndtsson and the third author in \cite{BerndtssonSibony}. 

In the  second part, for compact Riemann  surface laminations  with
singularities   
we get   an ergodic theorem  with more geometric  flavor than the ones  associated  to
a diffusion.  Let $(X,\Lc,E)$  be a lamination
by Riemann  surfaces.   Assume for simplicity that the  singularity set $E$ of $\Lc$ is
a  finite  set of points (several results still hold for a tame singular set, e.g. a complete pluripolar set).  
Then  every hyperbolic leaf $L$ is  covered by the unit disc $\D.$
Let $\phi_a:\D\rightarrow L_a$  denote  a universal
covering map of  the leaf $L_a$ passing through $a$ with  $\phi_a(0)=a.$
We  consider the  associated measure
$$m_{a,R}:=\frac{1}{M_R}(\phi_a)_* \big( \log^+ \frac{r}{|
  \zeta|}\omega_P\big) \quad\mbox{with}\quad 
R:=\log{1+r\over 1-r}$$
which is obtained by averaging until ``hyperbolic time'' $R$ along the leaves. Here,
$M_R$ is  a constant to normalize the mass.
Recall that $\omega_P$ denotes the Poincar{\'e} metric on $\D$ and also on
the leaves of $X$. 

Let $T$ be an extremal positive harmonic current on $X$ directed by the lamination.
Consider the measure $m_P:=T\wedge \omega_P$ (which is always finite when the singularities are linearizable) and for simplicity assume that $m_P$ is a probability measure. 
So, this is a natural harmonic measure on $X$ having no mass on parabolic leaves. We prove in particular that $m_{a,R}$ 
tends to $m_P$ for 
$m_P$-almost every $a$. This is  a lamination  version 
of the classical Birkhoff Theorem. 
Here, we introduce operators $B_R$, $R\in\R_+$, 
which are the analogue, for hyperbolic foliations, 
of the Birkhoff sums in discrete dynamics, see also 
Bonatti-G{\'o}mez-Mont-Viana \cite{BonattiGomezMont,BGMV} and \cite{FornaessSibony1}. 
For a test function $u$, the function $B_Ru$ is given by 
$$B_Ru(a):=\langle m_{a,R},u\rangle.$$
Our result is equivalent to the convergence $B_Ru(a)\to \langle
m_P,u\rangle$ for $m_P$-almost every $a$ and for $u\in L^1(m_P)$.

\section{Currents on a lamination} \label{section_current}

In this section, we will give some basic notions and properties of
currents for laminations. We refer to Demailly \cite{Demailly} and
Federer \cite{Federer} for currents on manifolds.
 
Let $X$ be a locally compact space. Consider an atlas $\Lc$ of $X$ with charts 
$$\Phi_i:\U_i\rightarrow \B_i\times \T_i,$$
where $\T_i$ is a locally compact metric space, $\B_i$ is a domain in $\R^n$ and
$\Phi_i$ is a homeomorphism defined on 
an open subset $\U_i$ of
$X$. 
We say that $(X,\Lc)$ is a
{\it real lamination of dimension $n$} if all the 
changes of coordinates $\Phi_i\circ\Phi_j^{-1}$ are of the form
$$(x,t)\mapsto (x',t'), \quad x'=\Psi(x,t),\quad t'=\Lambda(t)$$
where $\Psi,\Lambda$ are continuous functions, $\Psi$ is smooth with
respect to $x$ and its partial derivatives of any order with respect to $x$ are continuous.

The open set $\U_i$ is called a {\it flow
  box} and the manifold $\Phi_i^{-1}\{t=c\}$ in $\U_i$ with $c\in\T_i$ is a {\it
  plaque}. The property of the above coordinate changes insures that
the plaques in different flow boxes are compatible in the intersection of
the boxes.
A {\it leaf} $L$ is a minimal connected subset of $X$ such
that if $L$ intersects a plaque, it contains the plaque. So, a leaf $L$
is a connected real manifold of dimension $n$ immersed in $X$ which is a
union of plaques. It is not difficult to see that $\overline L$ is
also a lamination.
We will only consider {\it oriented laminations}, i.e. the case where
$\Phi_i$ preserve the canonical orientation on $\R^n$. So, the leaves
of $X$ inherit the orientation given by the one of $\R^n$.
A {\it transversal} in a flow box is a closed set of the box which intersects every
plaque in one point. In particular, $\Phi_i^{-1}(\{x\}\times \T_i)$ is a
transversal in $\U_i$ for any $x\in \B_i$.  In order to
simplify the notation, we often identify $\T_i$
with $\Phi_i^{-1}(\{x\}\times \T)$ for some $x\in \B_i$ or even
identify $\U_i$ with $\B_i\times\T_i$ via the map $\Phi_i$.

From now on, we fix an atlas on $X$ which is locally finite. For
simplicity, assume that the associated local coordinates extend to a
neighbourhood of the closure of each flow box in the atlas. If $\Phi:\U\rightarrow
\B\times \T$ is such a flow box, we assume for simplicity that $\B$
is contained in the ball of center 0 and of radius 3 in $\R^n$ and $\T$ is a locally compact 
metric space of diameter $\leq 1$. If the lamination is embedded in a Riemannian
manifold, it is natural to consider the metric on that manifold. In
the abstract setting, it is useful to introduce a metric for the
lamination. First, consider the metric on the flow box $\U\simeq
\B\times \T$ which is induced by the ones
on $\R^n$ and on $\T$. So, the flow box has diameter $\leq
7$ with respect to this metric. 
Consider two points $a,b\in X$, a sequence $a_0,\ldots,a_m$ with
$a_0=a$, $a_m=b$ and $a_i,a_{i+1}$ in a same flow box $\U_i$. Denote by
$l_i$ the distance between $a_i$ and $a_{i+1}$ in $\U_i$. 
Define the
distance between $a,b$ as the infimum of $\sum l_i$ over all choices
of $a_i$ and $\U_i$. This distance is locally equivalent to the
distance in flow boxes.

We recall now the notion of currents on a manifold.
Let $M$ be a real oriented manifold of dimension $n$. We fix an atlas
of $M$ which is locally finite. Up to reducing slightly the charts, we
can assume that the local coordinates system associated to
each chart is defined on a neighbourhood of the closure of this chart. For $0\leq p
\leq n$ and $l\in\N$, denote by $\Dc^p_l(M)$ the space of $p$-forms of
class $\Cc^l$ with compact support in $X$ and $\Dc^p(X)$ their
intersection. If $\alpha$ is a $p$-form on $X$, denote by
$\|\alpha\|_{\Cc^l}$ the sum of the $\Cc^l$-norms of the coefficients
of $\alpha$ in the local coordinates. These norms induce a
topology on $\Dc^p_l(M)$ and $\Dc^p(M)$. In particular, a
sequence $\alpha_j$ converges to $\alpha$ in $\Dc^p(M)$ if
these forms are supported in a fixed compact set and if
$\|\alpha_j-\alpha\|_{\Cc^l}\rightarrow 0$ for every $l$. 

A current of degree $p$ and of dimension $n-p$ on $M$ (a $p$-current
for short) is a continuous linear form
$T$ on $\Dc^{n-p}(M)$ with values in $\C$. 
The value of $T$ on a test form $\alpha$ in  $\Dc^{n-p}(M)$ is
denoted by $\langle T,\alpha\rangle$ or $T(\alpha)$.
The current $T$ is
of order $\leq l$ if it can be extended to a continuous linear form on 
$\Dc^{n-p}_l(M)$. The order of $T$ is the minimal integer $l\geq 0$ satisfying this condition.
It is not difficult to see that the restriction of $T$ to a relatively compact open
set of $M$ is always of finite order. Define
$$\|T\|_{-l,K}:=\sup\Big\{|\langle T,\alpha\rangle|,\quad
\alpha\in\Dc^{n-p}(M),\quad \|\alpha\|_{\Cc^l}\leq 1,\quad
\supp(\alpha)\subset K\Big\}$$
for $l\in\N$ and $K$ a compact subset of $M$. 
This quantity may be infinite when the order of $T$ is larger than $l$.

Consider now a real lamination of dimension $n$ as above.
The notion of differential forms on manifolds can be extended
to laminations, see Sullivan \cite{Sullivan}. A $p$-form on $X$ can be seen on the flow box
$\U\simeq \B\times\T$ as
a $p$-form on $\B$ depending on the parameter $t\in \T$. 
For $0\leq p\leq n$, denote by $\Dc^p_l(X)$ the space of 
$p$-forms $\alpha$ with compact support satisfying the following property: 
$\alpha$ restricted to each flow box $\U\simeq \B\times\T$ is a 
$p$-form of class $\Cc^l$ on the plaques whose coefficients and
all their derivatives up to order $l$ depend continuously on the
plaque. The norm $\|\cdot\|_{\Cc^l}$ on
this space is defined as in the case of real manifold using a locally finite
atlas of $X$. We also define $\Dc^p(X)$ as the intersection of
$\Dc^p_l(X)$ for $l\geq 0$.
{\it A current of bidegree $p$ and of dimension $n-p$} on $X$ is a continuous
linear form on the space $\Dc^{n-p}(X)$ with values in $\C$. A
$p$-current is of order $\leq l$ if it can be extended to a linear
continuous form on $\Dc^{n-p}_l(X)$. The restriction of a current to a
relatively compact open set of $X$ is always of finite order. 
The norm $\|\cdot\|_{-l,K}$ on currents is defined as in the case of
manifolds. The following result gives us the local structure of a
current. It shows in particular that we can consider the restriction
of a current to a measurable family of plaques.

\begin{proposition} \label{prop_struc_current}
Let $T$ be a $p$-current on a lamination $X$ and let
$\U\simeq \B\times\T$ be
a flow box as above which is relatively compact in $X$. 
Let $l$ be the order of the restriction of $T$ to $\U$. 
Then there is a positive Radon measure $\mu$ on $\T$ and a
measurable family of $p$-currents $T_a$ of order $l$ on $\B$ for 
$\mu$-almost every $a\in \T$ such that if $K$ is compact in $\B$, the integral
$\langle\mu, \|T_a\|_{-l,K}\rangle$ is finite and 
$$\langle T,\alpha\rangle =\int_\T \langle T_a,\alpha(\cdot,a)\rangle
d\mu(a)\quad
\mbox{for } \alpha \in \Dc^{n-p}_l(\U).$$ 
\end{proposition}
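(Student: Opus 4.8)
The plan is to reduce the statement to a disintegration-of-measures argument. First I would fix a locally finite sub-atlas of $\B$ and a countable dense family $(\alpha_k)$ of test forms in $\Dc^{n-p}_l(\B)$, together with an exhaustion $K_1\subset K_2\subset\cdots$ of $\B$ by compact sets. The key idea is that $T$ restricted to $\U\simeq\B\times\T$, being of order $l$, induces for each fixed test form $\beta\in\Dc^{n-p}_l(\B)$ a distribution on $\T$ via $\psi\mapsto\langle T,\beta\otimes\psi\rangle$; since multiplying $\beta$ by a function of $t$ only and using the continuity estimate $|\langle T,\gamma\rangle|\le\|T\|_{-l,K}\,\|\gamma\|_{\Cc^l}$, this distribution is in fact a complex Radon measure on $\T$, absolutely continuous with respect to a single positive Radon measure $\mu$ (take $\mu$ to be, say, $\sum_j 2^{-j}(\|T\|_{-l,K_j}+1)^{-1}$ times the total variation measures associated to the $\alpha_k$ restricted appropriately, then normalize). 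So for every $k$ there is a $\mu$-integrable function $g_k$ on $\T$ with $\langle T,\alpha_k\otimes\psi\rangle=\int_\T g_k(a)\psi(a)\,d\mu(a)$.

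Next I would build the family $T_a$. On a full $\mu$-measure set of $a\in\T$ the assignment $\alpha_k\mapsto g_k(a)$ extends, by density and the uniform bounds $|g_k(a)|\le h(a)\|\alpha_k\|_{\Cc^l}$ coming from the Radon–Nikodym derivatives (which one checks are dominated on each $K_j$ by a $\mu$-integrable $h_j$), to a continuous linear functional $T_a$ on $\Dc^{n-p}_l(\B)$, i.e.\ a $p$-current of order $\le l$ on $\B$, with $\|T_a\|_{-l,K_j}\le h_j(a)$ and hence $\langle\mu,\|T_a\|_{-l,K_j}\rangle<\infty$. Measurability of $a\mapsto\langle T_a,\alpha_k\rangle$ is automatic since it equals $g_k$. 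The identity $\langle T,\alpha\rangle=\int_\T\langle T_a,\alpha(\cdot,a)\rangle\,d\mu(a)$ then holds for all $\alpha$ of the special form $\alpha_k\otimes\psi$, and I would upgrade it to arbitrary $\alpha\in\Dc^{n-p}_l(\U)$ in two stages: first to finite sums $\sum \alpha_k\otimes\psi_k$ by linearity, then to general $\alpha$ by a Stone–Weierstrass / partition-of-unity approximation on the compact support of $\alpha$, using that such finite sums are dense in $\Dc^{n-p}_l(\U)$ in the $\Cc^l$-topology along the plaques and that both sides are continuous for that topology (the left side by definition of order $l$, the right side by dominated convergence against $h$).

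The main obstacle is the measurable-selection step: producing the family $(T_a)$ \emph{simultaneously} for $\mu$-a.e.\ $a$ from the countably many functions $g_k$, with uniform control of the operator norms, so that the pointwise-defined functional $T_a$ is genuinely continuous on all of $\Dc^{n-p}_l(\B)$ and not merely on the countable dense set. This requires carefully choosing $\mu$ and the dominating functions $h_j$ \emph{before} extracting Radon–Nikodym derivatives, so that outside a single $\mu$-null set the bound $|\langle T_a,\alpha_k\rangle|\le h_j(a)\|\alpha_k\|_{\Cc^l}$ holds for all $k$ at once (here one uses that there are only countably many $\alpha_k$, so a countable union of null sets is null). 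The rest — absolute continuity of the $t$-distributions, the extension by density, and the final approximation argument — is routine functional analysis once this scaffolding is in place.
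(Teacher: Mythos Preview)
Your approach differs substantially from the paper's, and the step you yourself flag as the main obstacle is a genuine gap that the outline does not close.

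The paper's proof is short and proceeds by successive reductions rather than by a direct slicing. A partition of unity reduces to $T$ compactly supported in the flow box; wedging with the coordinate $(n-p)$-forms $dx_{i_1}\wedge\cdots\wedge dx_{i_{n-p}}$ reduces to $p=n$, i.e.\ $T$ is a distribution; then, since $T$ has order $l$, the embedding $\varphi\mapsto(\partial^I\varphi)_{|I|\le l}$ into a finite product of $\Cc^0$-spaces together with Hahn--Banach yields $T=\sum_I T_I(\partial^I\cdot)$ with each $T_I$ of order $0$; writing each $T_I$ as a difference of positive measures reduces to $T$ a positive measure; finally one sets $\mu:=\pi_*T$ for the projection $\pi:\U\to\T$ and invokes the classical disintegration of measures along the fibres of $\pi$. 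The elementary remark that decompositions of $T$ and $T'$ assemble to one for $T+T'$ via the measure $\mu+\mu'$ justifies reassembling after each splitting.

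In your construction the phrase ``which one checks are dominated on each $K_j$ by a $\mu$-integrable $h_j$'' is the whole content of the proposition, and it does not follow from what you have written. From $\|\nu_k\|\le C_j\|\alpha_k\|_{\Cc^l}$ you obtain only $\int|g_k|\,d\mu\le C_j\|\alpha_k\|_{\Cc^l}$, which gives no $k$-uniform pointwise control: a countable family of nonnegative functions each of integral $\le 1$ can have pointwise supremum of infinite integral (take $k\,\chi_{[0,1/k]}$ on $[0,1]$). Your remark about countably many null sets is beside the point --- the difficulty is not discarding one null set per $k$ but producing a \emph{single} integrable $h_j$ that dominates every ratio $|g_k|/\|\alpha_k\|_{\Cc^l}$. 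What you are implicitly invoking is a Radon--Nikodym theorem for the vector measure $\psi\mapsto T(\cdot\otimes\psi)$ taking values in the dual of $\Cc^l(K_j)$, and that dual (already for $l=0$, the space of Radon measures) fails the Radon--Nikodym property. The paper's reduction to order zero is precisely the device that sidesteps this: once $T$ is a positive measure and $\mu=\pi_*T$, the conditional measures on the plaques are probabilities and one may take $h_j\equiv 1$.
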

\proof
Observe that if we have a local disintegration as above for two currents $T,T'$, then it is easy to get such a disintegration for $T+T'$ using the sum $\mu+\mu'$ of the corresponding measures $\mu,\mu'$. This property allows us to make several reductions below.
Using a partition of unity, we can reduce the problem to the case
where $T$ has compact support in $\U\subset \R^n\times\T$. If $(x_1,\ldots,x_n)$
  is a coordinate system in $\R^n$ and $1\leq i_1<\cdots<i_{n-p}\leq n$,
we only have to prove the proposition for the current $T\wedge dx_{i_1}\wedge\ldots \wedge
dx_{i_{n-p}}$. Therefore, we can 
assume that $T$ is an $n$-current, i.e. a distribution.

Now, since $T$ is of order $l$, it can be seen as a continuous linear
form on the derivatives $\varphi_I$ of order $l$ of a test function
$\varphi\in\Dc_l^0(X)$. By Hahn-Banach
theorem, there are distributions $T_I$ of order $0$ such that $T=\sum
T_I(\varphi_I)$. It is enough to prove the proposition for each $T_I$
instead of $T$. So, we can assume that
$T$ is of order 0, i.e.  a Radon measure. 

Since $T$ can be written as a difference of two positive measures, we
only have  to consider the case where $T$ is positive. Define 
$\pi$ the canonical projection from $U$ to $\T$ and
$\mu:=\pi_*(T)$.
The disintegration of $T$ along the fibers of $\pi$ gives the result.
\endproof

The following result shows that the above local decomposition of a
current is almost unique.

\begin{proposition} \label{prop_struc_current_unique}
With the notation of Proposition \ref{prop_struc_current}, if $\mu'$
and $T'_a$ are associated with another decomposition of $T$ in $\U$, then
there is a measurable function $\lambda>0$ on a measurable set
$\S\subset \T$ such that $T_a=0$ for $\mu$-almost every $a\not\in \S$,
 $T_a'=0$ for $\mu'$-almost every $a\not\in \S$,
$\mu'=\lambda\mu$ on $\S$
and $T_a=\lambda(a)T_a'$ for $\mu$ and
$\mu'$-almost every $a\in\S$.
\end{proposition}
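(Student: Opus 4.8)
The proposition is the current-theoretic counterpart of the (essential) uniqueness of the disintegration of a measure, and the plan is to follow the same two-step scheme: first put the two decompositions over one common transverse measure, then prove that a measurable family of currents integrating to zero against every test form must vanish almost everywhere.

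The first step: set $\nu:=\mu+\mu'$ and, by the Radon--Nikodym theorem, write $\mu=f\nu$ and $\mu'=g\nu$ with measurable functions $f,g\colon\T\to[0,1]$ and $f+g=1$ $\nu$-almost everywhere. Replacing $T_a$ by $S_a:=f(a)T_a$ and $T'_a$ by $S'_a:=g(a)T'_a$, one checks, using the finiteness of $\langle\mu,\|T_a\|_{-l,K}\rangle$ and $\langle\mu',\|T'_a\|_{-l,K}\rangle$ from Proposition \ref{prop_struc_current}, that $(S_a)$ and $(S'_a)$ are measurable families of $p$-currents of order $\le l$ on $\B$ with $\langle\nu,\|S_a\|_{-l,K}\rangle<\infty$ and $\langle\nu,\|S'_a\|_{-l,K}\rangle<\infty$ for every compact $K\subset\B$, and that
$$\int_\T\langle S_a,\alpha(\cdot,a)\rangle\,d\nu(a)=\langle T,\alpha\rangle=\int_\T\langle S'_a,\alpha(\cdot,a)\rangle\,d\nu(a)\qquad\text{for all }\alpha\in\Dc^{n-p}_l(\U).$$
Hence, with $R_a:=S_a-S'_a$, everything reduces to the following claim: if $(R_a)$ is a measurable family of $p$-currents of order $\le l$ on $\B$ with $\langle\nu,\|R_a\|_{-l,K}\rangle<\infty$ for all compact $K$, and $\int_\T\langle R_a,\alpha(\cdot,a)\rangle\,d\nu(a)=0$ for all $\alpha\in\Dc^{n-p}_l(\U)$, then $R_a=0$ for $\nu$-almost every $a$.

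To prove the claim I would test against product forms. For a fixed form $\beta\in\Dc^{n-p}_l(\B)$ and a fixed $\phi\in C_c(\T)$ the form $\alpha(x,a):=\phi(a)\beta(x)$ lies in $\Dc^{n-p}_l(\U)$, so $\int_\T\phi(a)\langle R_a,\beta\rangle\,d\nu(a)=0$; since $a\mapsto\langle R_a,\beta\rangle$ is $\nu$-integrable (it is dominated by $\|\beta\|_{\Cc^l}\,\|R_a\|_{-l,\supp\beta}$) and $\phi$ ranges over all of $C_c(\T)$, this forces $\langle R_a,\beta\rangle=0$ for $\nu$-almost every $a$, the exceptional set depending on $\beta$. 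Now exhaust $\B$ by compacts $K_1\subset K_2\subset\cdots$ with $\B=\bigcup_m\mathrm{int}(K_m)$ and, for each $m$, fix a countable family $\{\beta^{(m)}_j\}_j$ of class-$\Cc^l$ forms supported in $K_m$ that is dense for the $\Cc^l$-norm (possible because class-$\Cc^l$ forms on a compact set form a separable space). Outside a single $\nu$-null set $N$, all norms $\|R_a\|_{-l,K_m}$ are finite and $\langle R_a,\beta^{(m)}_j\rangle=0$ for every $m,j$; then, for $a\notin N$, continuity of $R_a$ in the $\Cc^l$-norm on forms supported in $K_m$ yields $\langle R_a,\beta\rangle=0$ for every such $\beta$, and letting $m\to\infty$ gives $R_a=0$. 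This establishes $f(a)T_a=g(a)T'_a$ for $\nu$-almost every $a$.

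It then remains to read off the statement. Put $\S:=\{a\in\T:\ f(a)>0,\ g(a)>0,\ T_a\neq0,\ T'_a\neq0\}$; this is measurable because, with the countable families above, $\{T_a\neq0\}$ agrees up to a null set with $\bigcup_{m,j}\{\langle T_a,\beta^{(m)}_j\rangle\neq0\}$, and similarly for $T'_a$. Set $\lambda:=g/f>0$ on $\S$. Since $f>0$ $\mu$-almost everywhere and $g>0$ $\mu'$-almost everywhere, a short case analysis of $f(a)T_a=g(a)T'_a$ gives the four assertions: $T_a=0$ for $\mu$-almost every $a\notin\S$; $T'_a=0$ for $\mu'$-almost every $a\notin\S$; $T_a=\lambda(a)T'_a$ for $\mu$- and $\mu'$-almost every $a\in\S$; and $\mu'=g\nu=\lambda(f\nu)=\lambda\mu$ on $\S$. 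The step I expect to be the genuine obstacle is the vanishing claim in the previous paragraph --- upgrading ``$\langle R_a,\beta\rangle=0$ $\nu$-a.e. for each fixed $\beta$'' to ``$R_a=0$ $\nu$-a.e.'' --- which is exactly where one must use the separability of the test-form spaces together with the fact, provided by Proposition \ref{prop_struc_current}, that $R_a$ has finite order $\le l$ on compacts for $\nu$-almost every $a$; the remaining arguments are routine measure theory and the management of countably many null sets.
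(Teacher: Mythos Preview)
Your proof is correct and uses the same core ingredient as the paper's: the ``vanishing claim'' that a measurable family of currents integrating to zero against all test forms must vanish almost everywhere, established by testing against products $\phi(a)\beta(x)$ and passing to a countable $\Cc^l$-dense family of $\beta$'s. The paper isolates this claim as the special case $T=0$ and then applies it directly to the two decompositions---first to the sets $E=\{T_a\neq0,\,T'_a=0\}$ and $E'=\{T_a=0,\,T'_a\neq0\}$ to see that $\mu(E)=\mu'(E')=0$, then once more to $T_a-\lambda(a)T'_a$ after obtaining $\lambda$ from the mutual absolute continuity of $\mu$ and $\mu'$ on $\S$---whereas you first pass to the common dominating measure $\nu=\mu+\mu'$ and apply the claim a single time to $R_a=f(a)T_a-g(a)T'_a$. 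Your organization is the standard disintegration-uniqueness template and is slightly more streamlined; the paper's version, in exchange, makes the set $\S=\{T_a\neq0,\,T'_a\neq0\}$ appear more directly without the auxiliary densities $f,g$. Neither route buys anything the other does not.
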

\proof
Consider first the case where $T=0$. We show that $T_a=0$ for
$\mu$-almost every $a$. Let $\alpha$ be a test form in
$\Dc^{n-p}(\B)$. Define $\eta(a):=\langle T_a,\alpha\rangle$. If $\chi$
is a continuous function with compact support in $\T$, we have by
Proposition \ref{prop_struc_current}
$$\int\chi\eta d\mu= \langle T,\chi\alpha\rangle=0.$$
It follows that $\eta(a)=0$ for $\mu$-almost every $a$. Applying this
property to a dense sequence of test forms $\alpha_j\in\Dc^{n-p}(\B)$
allows us to conclude that $T_a=0$ for $\mu$-almost every $a$.

Consider now the general case. Define
$$\S:=\{a\in\T,\ T_a\not=0, T_a'\not=0\}.$$
Observe that the restriction of $T$ to 
$$E:=\{a\in\T,\ T_a\not=0,\ T_a'=0\} \quad \mbox{and}\quad 
E':=\{a\in\T,\ T_a=0,\ T_a'\not=0\}$$ 
vanishes.  Then, using the first case, we obtain
that $\mu$ has no mass on $E$ and $\mu'$ has no mass on $E'$.
Therefore, $T_a=0$ for $\mu$-almost every $a\not\in\S$ and 
$T_a'=0$ for $\mu'$-almost every $a\not\in\S$.
Using the
first case, we also deduce that a measurable subset of $\S$ has
positive $\mu$ measure if and only if this is the case for
$\mu'$. Therefore, there is a function $\lambda>0$ such that
$\mu'=\lambda\mu$ on $\S$. Observe that $T_a-\lambda T_a'$ and $\mu$ define a
decomposition of 0 in $\U$. It follows that $T_a=\lambda T_a'$ for
$\mu$-almost every $a\in\S$.
\endproof 

A 0-current $T$ on a lamination is {\it positive} if it is of order 0 and if in
the local description as in Proposition \ref{prop_struc_current}, the currents
$T_a$ are given by  positive functions on $\B$ for $\mu$-almost every
$a\in \T$. Consider a Riemannian metric $g$ which is smooth on the leaves of $X$ and such that its restriction to a flow box depends in a measurable way on the plaques. 
A 0-current $T$ of order 0 is called {\it $g$-harmonic} (or simply {\it harmonic} if there is no confusion) if in the local description as above, the currents $T_a$ are given by $g$-harmonic functions.

Consider now the complex setting.
In the definition of the lamination $(X,\Lc)$, 
when the $\B_i$ are domains in $\C^n$ and $\Phi_i$ are holomorphic with
respect to $x$, we say that $X$ is a {\it complex lamination of dimension
  $n$}. In this case, the complex structure on $\B_i$ induces a complex
structure on the leaves of $X$. Therefore,
in the definition of lamination, it is enough to
assume that $\Psi_i(x,t)$
depends continuously on $t$; indeed Cauchy's  formula implies that all partial
derivatives of this function with respect to $x$ satisfy the same
property. 

Let $X$ be a complex lamination of dimension $n$. 
Denote by $\Dc^{p,q}_l(X)$ and $\Dc^{p,q}(X)$ the spaces of
forms in $\Dc^{p+q}_l(X)$ and $\Dc^{p+q}(X)$ respectively whose
restriction to plaques is of bidegree $(p,q)$. A $(p+q)$-current is of
bidegree $(p,q)$ if it vanishes on forms of bidegree $(n-p',n-q')$
with $(p',q')\not=(p,q)$.
The operators $\partial$ and $\overline\partial$ act on currents as in
the case of manifolds. If
$T$ is a $(p,q)$-current then $\partial T$ and $\dbar T$
are defined by
$$\langle\partial T,\alpha\rangle:=-\langle T,\partial \alpha\rangle
\quad \mbox{for all test } (n-p-1,n-q)\mbox{-form } \alpha$$
and
$$\langle\dbar T,\alpha\rangle:=-\langle T,\dbar \alpha\rangle
\quad \mbox{for all test } (n-p,n-q-1)\mbox{-form } \alpha.$$
We call {\it $\ddbar$-closed current or pluriharmonic current} a 
$(0,0)$-current $T$ on $X$ such that
$\ddbar T=0$ (in dimension $n=1$, we will say simply ``harmonic"
instead of ``pluriharmonic"). The following result shows that 
in dimension $n=1$ this notion coincides with the notion of $g$-harmonic current if we consider 
Poincar{\'e}'s metric on the hyperbolic leaves of $X$ and standard metrics
on the parabolic ones or more generally conformal metrics on the leaves.

\begin{proposition} \label{prop_current_local_c}
Let $T$ be a pluriharmonic current on a complex lamination $X$. Let $\U\simeq \B\times\T$ be a flow
box as above which is relatively compact in $X$. Then $T$ is a normal current, i.e. $T$ and $dT$ are 
of order $0$. Moreover, there is a positive Radon
measure $\mu$ on $\T$ and for $\mu$-almost every $a\in \T$ there is a
pluriharmonic function $h_a$ on $\B$ 
such that if $K$ is compact in $\B$ 
the integral $\langle \mu, \|h_a\|_{L^1(K)}\rangle$ is finite and
$$\langle T,\alpha\rangle=\int_\T \Big(\int_\B h_a(z) \alpha(z,a)\Big) d\mu(a)
\quad \mbox{for}\quad  \alpha\in \Dc^{n,n}_0(X).$$
\end{proposition}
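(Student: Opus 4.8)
The plan is to reduce the statement to Proposition \ref{prop_struc_current} by first establishing that a pluriharmonic $(0,0)$-current is automatically normal, and then identifying the fiber currents $T_a$ as harmonic functions via the mean-value property. The starting point is that in dimension $n$ (restricted to a flow box $\U\simeq\B\times\T$), a pluriharmonic current $T$ satisfies $\ddbar T=0$; on $\B$, pluriharmonicity of the fibers will turn out to force them to be genuine functions. First I would show $T$ has order $0$: this is the key regularity input. The heuristic is that $\ddbar T = 0$ means $T$ is, fiberwise, pluriharmonic in the sense of distributions, and a pluriharmonic distribution on a domain in $\C^n$ is represented by a (pluri)harmonic, hence smooth, function — so it is in particular a measure. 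To make this precise one convolves $T$ in the $z$-variables with a smooth approximate identity $\rho_\epsilon(z)$: the regularizations $T*\rho_\epsilon$ are smooth in $z$ (with coefficients depending measurably on $t$), still pluriharmonic in $z$, and by the mean-value property for pluriharmonic functions they are locally uniformly bounded in $\epsilon$ on compact subsets of $\B$ once tested against a positive bump, giving uniform mass bounds; passing to the limit shows $T$ is of order $0$ locally, and similarly one controls $dT$. (Alternatively, one invokes the known fact — e.g. from \cite{BerndtssonSibony} or the foliation literature — that positive $\ddbar$-closed currents are normal, and reduces the general case to the positive one by a local decomposition as in the proof of Proposition \ref{prop_struc_current}.)

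Once $T$ is of order $0$, Proposition \ref{prop_struc_current} applies with $l=0$: there is a positive Radon measure $\mu$ on $\T$ and a measurable family of order-$0$ currents, i.e. distributions, $T_a$ on $\B$ with $\langle\mu,\|T_a\|_{-0,K}\rangle<\infty$ and
$$\langle T,\alpha\rangle=\int_\T\langle T_a,\alpha(\cdot,a)\rangle\,d\mu(a),\qquad \alpha\in\Dc^{n,n}_0(\U).$$
The next step is to transfer the equation $\ddbar T=0$ to the fibers. Testing $\ddbar T=0$ against forms of the type $\chi(t)\beta(z)$ with $\chi\in\Cc^0_c(\T)$, $\beta\in\Dc^{n-1,n-1}(\B)$ and using the local description yields $\int_\T\chi(a)\langle\ddbar T_a,\beta\rangle\,d\mu(a)=0$ for all such $\chi$, hence $\langle\ddbar T_a,\beta\rangle=0$ for $\mu$-almost every $a$; applying this to a countable dense family of $\beta$ gives $\ddbar T_a=0$ for $\mu$-almost every $a$. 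Thus $T_a$ is, for $\mu$-a.e. $a$, a pluriharmonic distribution on $\B\subset\C^n$, which by elliptic regularity (pluriharmonic $\Rightarrow$ harmonic $\Rightarrow$ $\Cc^\infty$) is represented by a pluriharmonic function $h_a$ on $\B$. The integrability $\langle\mu,\|h_a\|_{L^1(K)}\rangle<\infty$ is exactly the statement $\langle\mu,\|T_a\|_{-0,K}\rangle<\infty$ reread in terms of the function $h_a$, and the integral formula for $\langle T,\alpha\rangle$ on $\alpha\in\Dc^{n,n}_0(X)$ follows by writing $\langle T_a,\alpha(\cdot,a)\rangle=\int_\B h_a(z)\alpha(z,a)$.

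The main obstacle is the first step — proving the automatic normality (order $0$) of a pluriharmonic current on a lamination — since the fiber currents $T_a$ produced by Proposition \ref{prop_struc_current} are only known a priori to have some finite order $l$, and one must genuinely use the regularizing effect of the equation $\ddbar T=0$ rather than any transverse regularity, which is absent here. The delicate point is that the regularization must be performed only in the leaf directions while keeping uniform control over the transverse parameter; one handles this by regularizing within each flow box and checking that the mass estimates are uniform on compacts, using the positivity of test bumps and the sub–mean-value inequality for the (pluri)subharmonic components. A secondary, purely measure-theoretic point is to arrange the countable dense family of test forms so that the $\mu$-null sets can be taken uniform in $\beta$; this is routine once separability of $\Dc^{n-1,n-1}(\B)$ in the relevant topology is noted.
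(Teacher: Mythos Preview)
Your outline is correct, but you have made the proof harder than it needs to be by insisting on establishing order $0$ \emph{before} disintegrating. The paper reverses these two steps, and this dissolves precisely the ``main obstacle'' you single out.

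Concretely, the paper applies Proposition~\ref{prop_struc_current} straight away, at whatever finite order $l$ the restriction of $T$ to $\U$ happens to have. One then shows (exactly as you do, by testing against $\chi(t)\beta(z)$ and a countable dense family) that $\ddbar T_a=0$ for $\mu$-almost every $a$, so each $T_a$ is a pluriharmonic distribution on $\B$, hence a smooth function $h_a$. Now the point is the elementary interior estimate for (pluri)harmonic functions: for $K\Subset L\Subset\B$ one has $\|h_a\|_{L^1(K)}\lesssim\|h_a\|_{-l,L}$, uniformly in $a$. Since $\langle\mu,\|T_a\|_{-l,L}\rangle<\infty$ by Proposition~\ref{prop_struc_current}, this immediately gives $\langle\mu,\|h_a\|_{L^1(K)}\rangle<\infty$, which is exactly the statement that $T$ has order $0$. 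The same reasoning with $\|dh_a\|_{L^1(K)}\lesssim\|h_a\|_{L^1(L)}$ gives that $dT$ has order $0$.

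So your regularization argument in the leaf variables, with its attendant worries about uniform control over the transverse parameter, is not needed: the regularizing effect of $\ddbar T=0$ is used \emph{fiberwise}, after disintegration, via the classical interior estimate, rather than globally on the current. Both approaches work, but the paper's ordering is cleaner and avoids the delicate uniformity issues you flag.
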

\proof
Using Proposition \ref{prop_struc_current}, we easily deduce that $T_a$ is
$\ddbar$-closed for $\mu$-almost every $a\in \T$. It follows that
$T_a$ is given by a pluriharmonic function $h_a$ on $\B$. If $K,L$ are  compacts in $\B$ with $K\Subset L$, 
the harmonic property implies that $\|h_a\|_{L^1(K)}\lesssim \|h_a\|_{-l,L}$ for $0\leq l<\infty$ and 
$\|dh_a\|_{L^1(K)}\lesssim \|h_a\|_{L^1(L)}$. 
This implies that $T$ and $dT$ are of order 0 and completes the proof.
\endproof

For complex lamination, there is a notion of positivity for
currents of bidegree $(p,p)$ which extends the same notion for
$(p,p)$-currents on complex manifolds. We shortly recall the last
one that we will use later.

A $(p,p)$-form on a complex manifold $M$ of dimension $n$ is {\it
  positive} if it can be written at every point as a combination with
positive coefficients of forms of type
$$i\alpha_1\wedge\overline\alpha_1\wedge\ldots\wedge
i\alpha_p\wedge\overline\alpha_p$$
where the $\alpha_j$ are $(1,0)$-forms. A $(p,p)$-current or a $(p,p)$-form $T$ on $M$ is
{\it weakly positive} if $T\wedge\varphi$ is a positive measure for
any smooth positive $(n-p,n-p)$-form $\varphi$. A $(p,p)$-current $T$
is {\it positive} if $T\wedge\varphi$ is a positive measure for
any smooth weakly positive $(n-p,n-p)$-form $\varphi$. 
If $M$ is given with a Hermitian metric $\omega$, 
$T\wedge \omega^{n-p}$ is a positive measure on
$M$. The mass of  $T\wedge \omega^{n-p}$
on a measurable set $E$ is denoted by $\|T\|_E$ and is called {\it the mass of $T$ on $E$}.
{\it The mass} $\|T\|$ of $T$ is the total mass of  $T\wedge \omega^{n-p}$.
We will use the following local property of positive $\ddbar$-closed
currents which is due to Skoda \cite{Skoda}.
Recall that $\dc:={i\over2\pi}(\dbar-\partial)$ and $\ddc = {i\over \pi}\ddbar$. 
\begin{lemma} \label{lemma_skoda}
Let $B_r$ denote the ball of
  center $0$ and of radius $r$ in $\C^n$. 
Let $T$ be a positive $\ddbar$-closed $(p,p)$-current in
  a ball $B_{r_0}$.
Define $\beta:=\ddc\|z\|^2$
  the standard K{\"a}hler form where $z$ is the canonical
  coordinates on $\C^n$.
Then the function $r\mapsto \pi^{-(n-p)} r^{-2(n-p)}\|T\wedge\beta^{n-p}\|_{B_r}$
is increasing on $0<r\leq r_0$. In particular, it is bounded on 
$]0,r_1]$ for any  $0<r_1<r_0$.
\end{lemma}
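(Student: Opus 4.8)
The plan is to reduce to the classical Lelong--Jensen type formula. First I would recall that $\beta = \ddc \|z\|^2$ is, up to a constant, the Fubini--Study form pulled back, and that $\beta^n = c\, \Leb$ for an explicit constant. Set $\nu(r) := \pi^{-(n-p)} r^{-2(n-p)} \|T\wedge \beta^{n-p}\|_{B_r} = \pi^{-(n-p)} r^{-2(n-p)} \int_{B_r} T\wedge \beta^{n-p}$. The goal is to show $\nu$ is increasing in $r$.

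The key computation is to compare $\beta$ with $\alpha := \ddc \log\|z\|^2$ on the annulus. On $\C^n \setminus \{0\}$ one has the pointwise identity $\|z\|^2 \alpha = \beta - \ddc'$ where the correction is nonnegative in the weak sense; more usefully, the standard fact is that for $0 < s < r$,
\[
\frac{1}{r^{2(n-p)}}\int_{B_r} T\wedge\beta^{n-p} - \frac{1}{s^{2(n-p)}}\int_{B_s} T\wedge\beta^{n-p}
= \int_{B_r \setminus B_s} T\wedge (\ddc \log\|z\|^2)^{n-p} + (\text{boundary terms}).
\]
One derives this by writing $\beta^{n-p}$ on the annulus in terms of $(\ddc\log\|z\|^2)^{n-p}$ and using Stokes' theorem together with $\ddbar T = 0$ (so that $T$ can be integrated by parts freely against the relevant forms, all of whose coefficients are smooth away from the origin). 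Since $(\ddc \log\|z\|^2)^{n-p}$ is a positive form on $\C^n\setminus\{0\}$ and $T$ is a positive current, the integral over the annulus $B_r\setminus B_s$ is nonnegative, and the boundary terms either cancel or contribute with the correct sign; this gives $\nu(r) \ge \nu(s)$.

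The main technical obstacle is justifying the integration by parts in the presence of the singularity at $0$: the forms $\log\|z\|^2$ and $(\ddc\log\|z\|^2)^{n-p}$ are singular there, so one must work on the annulus $B_r\setminus \overline{B_\epsilon}$, apply Stokes with the positive $\ddbar$-closed current $T$ (which by the general theory has locally finite mass, so the cutoff errors are controlled), and then let $\epsilon \to 0$, checking that the inner-boundary integrals over $\partial B_\epsilon$ tend to $\nu(0^+)\cdot(\text{const})$ or vanish appropriately. This is exactly Skoda's argument, so I would cite \cite{Skoda} for the delicate limit and present only the formal computation. The boundedness on $]0,r_1]$ is then immediate from monotonicity: $\nu(r) \le \nu(r_1)$ for $r\le r_1 < r_0$.
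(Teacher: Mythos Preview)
The paper does not prove this lemma at all: it is stated as a classical result due to Skoda and simply cited via \cite{Skoda}, with the sentence immediately following the statement only remarking that the limit as $r\to 0$ defines the Lelong number. Your sketch is a correct outline of the standard argument (Lelong--Jensen formula, comparison of $\beta^{n-p}$ with $(\ddc\log\|z\|^2)^{n-p}$, Stokes on annuli using $\ddbar T=0$, positivity of the annular integrand), and since you yourself propose citing \cite{Skoda} for the delicate limit, your treatment is in fact more detailed than the paper's, which offers no argument whatsoever.
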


The limit of the above function when $r\rightarrow 0$ is called {\it
  the Lelong number} of $T$ at $0$. The lemma shows that Lelong's
number exists and is finite.

\begin{lemma} \label{lemma_ds}
Let $T$ be a positive current of bidimension $(1,1)$ with compact support on a complex manifold $M$. Assume that $\ddc T$ is a negative measure on $M\setminus E$ where $E$ is a finite set. Then $T$ is a $\ddc$-closed current on $M$.
\end{lemma}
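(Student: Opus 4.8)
The plan is to localize near the finite set $E$ and show that the negative measure $\ddc T$ cannot in fact carry any mass on $E$, once we know that $T$ itself has finite mass. The starting point is that $T$ is a positive current of bidimension $(1,1)$ with compact support, so its total mass $\|T\|$ with respect to a fixed Hermitian metric $\omega$ is finite. Write $\ddc T = \nu$, a measure which is negative on $M\setminus E$. Since $T$ has compact support, $\int_M \ddc T = \langle \ddc T, 1\rangle = 0$ (testing against a smooth function equal to $1$ on a neighbourhood of $\supp T$). Decompose $\nu = \nu_0 + \sum_{p\in E} c_p \delta_p$, where $\nu_0$ is the part of $\nu$ on $M\setminus E$, hence $\nu_0 \le 0$, and the $c_p$ are the (a priori real) point masses at the finitely many points of $E$. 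The goal is to prove $c_p \ge 0$ for each $p$, because then $\nu = \nu_0 + \sum c_p\delta_p$ with $\int \nu_0 \le 0$, $\int \sum c_p \delta_p \ge 0$, and total integral $0$ will force $\nu_0 = 0$ and $c_p = 0$, i.e. $\ddc T = 0$.

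The key step is therefore the local claim: if $S$ is a positive $(1,1)$-current on a ball $B_{r_0}\subset\C^n$ with $\ddc S \le 0$ on $B_{r_0}\setminus\{0\}$ and $\ddc S = c\,\delta_0 + (\text{negative measure})$, then $c\ge 0$. For this I would use the monotonicity of the Lelong-type mass function. Consider $f(r) := \pi^{-(n-1)} r^{-2(n-1)}\|S\wedge\beta^{n-1}\|_{B_r}$ with $\beta = \ddc\|z\|^2$. A direct computation (integration by parts, exactly as in the proof of Skoda's Lemma \ref{lemma_skoda}) shows that the derivative of $f$ is, up to a positive normalizing constant, controlled by the mass of $\ddc S$ on $B_r$. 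More precisely, one gets a formula of the shape
$$
f(r_2) - f(r_1) \;=\; \text{(positive const)}\int_{r_1}^{r_2} s^{-2(n-1)}\Big(\int_{B_s}\ddc S\Big)\frac{ds}{s} \;+\; \text{(nonnegative boundary terms)}.
$$
If $c<0$, then for small $s$ the inner integral $\int_{B_s}\ddc S$ is bounded above by $c/2 < 0$ (the negative measure only helps), so $f(r)\to +\infty$ as $r\to 0$; but $f$ is bounded near $0$ since $\|S\wedge\beta^{n-1}\|_{B_r}\le \|S\wedge\beta^{n-1}\|_{B_{r_1}} < \infty$ for $r\le r_1$. This contradiction gives $c\ge 0$. (Alternatively: when $\ddc S\le 0$ everywhere the function $f$ is non-increasing, and adding back a negative point mass can only make it blow up faster; the quantitative inequality above is what one needs.)

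The main obstacle is making the integration-by-parts computation rigorous in the presence of the singular set $E$: one must justify the boundary-term manipulations at the spheres $\partial B_s$ for almost every $s$ (Sard-type argument, the slicing of $S$ by spheres being well defined for a.e. radius), and one must be careful that $S\wedge\beta^{n-1}$ and $S\wedge\dc\|z\|^2\wedge\beta^{n-2}$ are genuine measures/currents of order $0$ so that the restrictions to $B_s$ and the Stokes formula are legitimate — this is where positivity of $S$ is essential, via the standard fact that positive currents have measure coefficients. Once the local claim is established, the globalization is immediate: cover $E$ by disjoint small coordinate balls, apply the claim in each, conclude $c_p\ge 0$ for all $p\in E$, and combine with $\int_M\ddc T = 0$ and $\ddc T\le 0$ off $E$ to deduce $\ddc T = 0$ on all of $M$.
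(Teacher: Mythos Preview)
Your global reduction has a logical gap: knowing $\nu_0 \le 0$ and each $c_p \ge 0$ together with $\int\nu_0 + \sum_p c_p = 0$ does \emph{not} force $\nu_0 = 0$ and $c_p = 0$; it only gives $\sum_p c_p = -\int\nu_0 \ge 0$, which is vacuous. What actually closes the argument is the \emph{opposite} inequality $c_p \le 0$, since then $\ddc T$ is a nonpositive measure on all of $M$, and a nonpositive measure of total mass $\langle\ddc T,1\rangle = \langle T,\ddc 1\rangle = 0$ must vanish. This is precisely the paper's route: it quotes \cite{FSW} for the fact that the negative measure $\ddc T$ on $M\setminus E$ extends across the finite set $E$ as a negative measure on $M$, and then concludes in one line.

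More seriously, your local claim ``$c \ge 0$'' is false. Take $S := \|z\|^{-2}\,\beta$ on a ball in $\C^2$: this is a positive current of bidimension $(1,1)$ with finite mass, and since $\|z\|^{-2}$ is the fundamental solution of the Laplacian in $\R^4$ one gets $\ddc S = (\ddc\|z\|^{-2})\wedge\beta = c\,\delta_0$ with $c < 0$, while $\ddc S = 0 \le 0$ off the origin. A direct computation gives $\|S\wedge\beta\|_{B_r} = \mathrm{const}\cdot r^2$, so the Lelong quotient $f(r)$ is constant and there is no blow-up to contradict. The specific slip in your argument is the sentence ``$f$ is bounded near $0$ since $\|S\wedge\beta^{n-1}\|_{B_r}\le\|S\wedge\beta^{n-1}\|_{B_{r_1}}$'': this bounds only the numerator of $f$, not $f$ itself (and the exponent should be $2$, not $2(n-1)$, since $S$ has bidimension $(1,1)$). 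Note also that your decomposition $\ddc T = \nu_0 + \sum_p c_p\delta_p$ already presupposes that $\ddc T$ is a measure on $M$ and that $\nu_0$ has locally finite mass near $E$; neither is obvious a priori, and establishing this is part of the content of the cited result from \cite{FSW}.
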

\proof
Since $E$ is finite, $\ddc T$ is a negative measure on $M$, see \cite{FSW}. On the other hand, we have 
$$\langle \ddc T,1\rangle=\langle T,\ddc 1\rangle =0.$$
It follows that $\ddc T=0$ on $M$. 
\endproof

\section{Riemann surface laminations} \label{section_riemann}

In this section, we consider a {\it Riemann surface lamination}, i.e. a
complex lamination $X$ as above of dimension $n=1$. 
The lamination has no singular points but we do not assume that it is compact.
What we have in mind as an example is the regular part of a compact lamination with singularities. 
Consider also a Hermitian metric on $X$, i.e. Hermitian metrics on the
leaves of $L$ whose restriction to each flow box defines Hermitian
metrics on the plaques that depend continuously on the plaques. It is
not difficult to construct such a metric using a partition of
unity. Observe that all the Hermitian metrics on $X$ are locally
equivalent. So, from now on, fix a Hermitian metric on $X$. It is
given by a strictly positive smooth $(1,1)$-form $\omega$ on $X$. 

We will need some basic properties of these laminations. Let $S$ be a
hyperbolic Riemann surface, i.e. a Riemann surface whose universal
covering is the unit disc $\D$ in $\C$. Let $\phi:\D\rightarrow
S$ be a universal covering map which is unique up to an automorphism on $\D$. 
The fundamental group $\pi_1(S)$ can be identified with a group of
automorphisms of $\D$.
Since the Poincar{\'e} metric on
$\D$ is invariant under the automorphism group, it
induces via $\phi$ a metric on $S$ that we also call {\it the Poincar{\'e}
  metric}. It is smooth and the surface $S$ is complete with respect to that metric.
By convention, Poincar{\'e}'s metric (pseudo-metric to be precise) on a
parabolic Riemann surface vanishes identically. 

Poincar{\'e}'s metric on the leaves of $X$ defines a positive
$(1,1)$-form $\omega_P$, which a priori is not necessarily transversally continuous. 
The continuity is proved in some important cases, see Candel-G{\'o}mez-Mont \cite{CandelGomezMont} and \cite{FornaessSibony2}. 
Consider a hyperbolic leaf $L_a$ passing through
a point $a$ and a universal covering map $\phi_a:\D\rightarrow L_a$ 
such that $\phi_a(0)=a$. The map $\phi_a$ is unique up to a rotation on $\D$. 
Define 
$$\vartheta(a):=\|D\phi_a(0)\|^{-2},$$
where $\|D\phi_a(0)\|$ is the norm of the differential of $\pi$ at $0$
with respect to the Euclidian metric on $\D$ and the fixed
Hermitian metric on $L$. Recall that at 0 the Poincar{\'e} metric on $\D$ 
is equal to two times the Euclidian metric. The above definition does not depend on the
choice of $\phi_a$.
We obtain that 
$$\omega_P=4\vartheta\omega.$$
Recall also that $\omega_P$ is an extremal metric in the sense that 
if $\tau:\D\rightarrow L_a$ is a holomorphic map
such that $\tau(0)=a$, then $\|D\tau(0)\|\leq \vartheta(a)^{-1/2}$. The
equality occurs in the last estimate only when $\tau$ is a universal
covering map of $L_a$. 

Consider an open set $V\subset \C$ and a sequence of holomorphic maps
$\tau_n: V\rightarrow X$, 
i.e. holomorphic maps from $V$ to leaves of $X$. We say that
$\tau_n$ {\it converge locally uniformly} to a holomorphic map
$\tau:V\rightarrow X$ if any point $z_0\in V$ admits a
neighbourhood $V_0$ such that for $n$ large enough $\tau_n$ and $\tau$ restricted
to $V_0$ have values in the same flow box and $\tau_n$ converge
uniformly to $\tau$ on $V_0$. This notion coincides with the
local uniform convergence with respect to the metric on $X$ introduced
in Section \ref{section_current}.
A family $\Fc$ of holomorphic maps from
$V$ to $X$ is said to be {\it normal} if any infinite set
$\Fc'\subset \Fc$ admits a sequence which converges locally uniformly
to a holomorphic map. We have the following proposition.

\begin{proposition} \label{prop_poincare_measurable}
Let $X$ be a Riemann surface lamination as above. Then the
  Poincar{\'e} metric $\omega_P$ is a measurable $(1,1)$-form on
  $X$. In particular, the union of parabolic leaves is a measurable
  set. Moreover, the function $\vartheta$ associated with $\omega_P$
  is locally bounded.
\end{proposition}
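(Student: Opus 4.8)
The plan is to exhibit $\vartheta$ (equivalently $\omega_P = 4\vartheta\omega$) as a limit of measurable — in fact, locally constructible — functions, using the extremal characterisation of the Poincar\'e metric recalled just above. First I would fix a point $a\in X$ lying in a flow box $\U\simeq\B\times\T$ with $\B\subset\C$, and consider, for each holomorphic map $\tau\colon\D\to X$ with $\tau(0)=a$, the quantity $\|D\tau(0)\|$ computed with the fixed Hermitian metric $\omega$. The extremality statement says $\vartheta(a)^{-1/2} = \sup_\tau \|D\tau(0)\|$, the supremum over all such $\tau$, with the convention that on parabolic leaves the supremum is $+\infty$ (so $\vartheta(a)=0$ there). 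So it suffices to show that $a\mapsto \sup_\tau \|D\tau(0)\|$ is measurable and that $\vartheta$ is locally bounded below away from $0$ — the latter giving local boundedness of $\vartheta$.

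For measurability, the idea is a diagonal/compactness argument. I would first show local boundedness of $\vartheta$ from below (equivalently, a local upper bound on $\|D\tau(0)\|$): over a relatively compact flow box, any holomorphic $\tau\colon\D\to X$ with $\tau(0)$ in a slightly smaller box stays, on a definite subdisc $r_0\D$, inside a fixed larger flow box by a standard Schwarz-type / monodromy argument (the leaf through $\tau(0)$ cannot escape too fast because the metric is comparable to the Euclidean one on the box), and then the Cauchy estimates on $r_0\D$ bound $\|D\tau(0)\|$ by a constant depending only on the box. This proves the last assertion of the proposition. Next, to get measurability of the sup, I would approximate: for each $n$, restrict attention to maps $\tau$ defined and holomorphic on a neighbourhood of $\overline{\D}$ obtained by rescaling, i.e. consider $\sup\{\|D\tau(0)\|\colon \tau\colon (1+1/n)\D\to X \text{ holomorphic},\ \tau(0)=a\}$; these decrease to $\vartheta(a)^{-1/2}$ as $n\to\infty$. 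Each such sup, call it $g_n(a)$, I would argue is lower semicontinuous in $a$: given $\tau$ nearly achieving $g_n(a)$, the family of plaque-to-plaque graphs through nearby points $a'$ in the same box can be perturbed (using the continuity in $t\in\T$ of the plaques and normal-family compactness of holomorphic maps into a fixed larger flow box) to produce competitors $\tau'$ with $\tau'(0)=a'$ and $\|D\tau'(0)\|$ close to $\|D\tau(0)\|$. Hence $g_n$ is Borel, and $\vartheta^{-1/2}=\lim g_n$ is Borel, so $\vartheta$ and $\omega_P$ are measurable; the parabolic locus is $\{\vartheta=0\}$, a measurable set.

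The main obstacle is the semicontinuity / perturbation step: one must genuinely transfer a near-extremal covering-type map $\tau$ based at $a$ to a near-extremal map based at a neighbouring point $a'$, and this requires controlling how the leaf and its universal-cover parametrisation vary transversally, when a priori $\omega_P$ need not be transversally continuous. The trick is not to move inside a single leaf but to exploit that over a fixed flow box every holomorphic disc with bounded image lies in a normal family (Montel, since the ambient box embeds in $\C^{n}$), so a sequence $a_j\to a$ with $g_n(a_j)$ bounded yields, after passing to a subsequence, a limit disc based at $a$; this gives $\limsup_j g_n(a_j)\le g_n(a)$ in the relevant direction and, combined with the easy direction (restricting an extremal map for $a$ slightly and reparametrising to base it at $a_j$ — using the holomorphic dependence of plaques on $z$ and continuous dependence on $t$), pins down the Borel character of $g_n$. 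I expect the bookkeeping for "reparametrising to base at $a_j$" to be the fiddly part, but it is routine once the flow-box structure and the normal-family compactness are in place, and one may, as the authors surely do, restrict throughout to a single relatively compact flow box and patch with a countable cover.
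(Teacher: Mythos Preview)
Your argument for local boundedness is backwards. Since $\vartheta(a)^{-1/2}=\sup_\tau\|D\tau(0)\|$, an \emph{upper} bound on $\vartheta$ requires a \emph{lower} bound on this supremum, i.e.\ the existence of \emph{one} competitor $\tau$ with $\|D\tau(0)\|\geq c>0$. You instead try to bound $\|D\tau(0)\|$ from above for \emph{all} $\tau$, which would give $\vartheta\geq c$, the wrong inequality (and false whenever parabolic leaves are present, since there $\vartheta=0$). Worse, the Schwarz-type claim you invoke --- that any $\tau:\D\to X$ with $\tau(0)$ in a small box must stay in a larger flow box on a definite subdisc $r_0\D$ --- is simply false: if $L_a$ is parabolic, take $\phi:\C\to L_a$ and set $\tau_r(\zeta)=\phi(r\zeta)$; then $\tau_r(0)=a$ but $\tau_r(r_0\D)$ sweeps out all of $L_a$ as $r\to\infty$. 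The paper's argument is instead a one-line construction: in a flow box $\U\simeq\B\times\T$ with $\B$ a disc of radius $3$, the map $\tau(z)=(z+b,t)$ sends $\D$ into a single plaque with $\|D\tau(0)\|$ bounded below, hence $\vartheta$ is bounded above.

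For measurability, your idea of writing $\vartheta^{-1/2}$ as a monotone limit of semicontinuous approximants is exactly what the paper does, but your specific $g_n$ and the accompanying semicontinuity discussion are muddled: you first argue $g_n$ is lower semicontinuous via perturbation, then switch to a normal-family limit giving $\limsup_j g_n(a_j)\leq g_n(a)$, which is upper semicontinuity. The perturbation you propose (moving a near-extremal $\tau$ to a nearby leaf through a chain of flow boxes) needs the image of $\tau$ to be covered by finitely many flow boxes, which is not automatic since $X$ need not be compact and $\tau\big((1+1/n)\D\big)$ need not be relatively compact. The paper sidesteps all of this by imposing compactness directly: it fixes an exhaustion $K_n\Subset K_{n+1}$ of $X$, sets $\Fc_n:=\{\tau:\D\to K_n,\ \|D\tau\|_\infty\leq n\}$, which is a compact family, and then $\xi_n(a):=\sup\{\|D\tau(0)\|:\tau\in\Fc_n,\ \tau(0)=a\}$ is upper semicontinuous by a routine limit argument. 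One then checks $\vartheta^{-1/2}=\sup_n\xi_n$ by rescaling a universal cover (or a $\C$-parametrisation in the parabolic case).
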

\proof
By definition, $\vartheta$ is a non-negative function. We first show that it is
locally bounded. Consider a small neighbourhood $W$ of a point $a\in
X$ and a flow box $\U\simeq \B\times \T$ containing $W$. We can assume
that $\B$ is the disc of center 0 and of radius 3 in $\C$ and that $W$
is contained in $\D\times\T$ where $\D$ is the unit disc in
$\C$. Consider the family of holomorphic map $\tau:\D\rightarrow
\U$ such that $\tau(z)=(z+b,t)$ with $(b,t)\in W$. It is clear that
$\|D\tau(0)\|$ is bounded from below by a strictly positive constant
independent of $(b,t)$. Therefore, the extremality of Poincar{\'e}'s
metric implies that $\vartheta$ is bounded from above on $W$. This gives
the last assertion in the proposition. 

It remains to show that $\vartheta$ is a measurable function. 
Fix a sequence $K_n$ of compact subsets of $X$
such that $K_n$ is contained in the interior of $K_{n+1}$ and that
$X=\cup K_n$. 
We only have to show that $\vartheta$ is measurable on
$K_0$.
For all positive integer $n$, denote by
$\Fc_n$ the family of holomorphic maps $\tau:\D\rightarrow
K_n$ such that $\|D\tau\|_\infty\leq n$. 
It is not difficult to see
using flox boxes that this family is compact. Therefore, the function
$$\xi_n(a):=\sup\Big\{\|D\tau(0)\|:\quad \tau\in
\Fc_n\mbox{ with } \tau(0)=a \Big\}$$
is upper semi-continuous on $a\in K_0$. The extremality of Poincar{\'e}'s metric
implies that $\xi_n\leq \vartheta^{-1/2}$. It is now enough to show that
$\vartheta^{-1/2}=\sup_n \xi_n$. We distinguish two cases.

Let $a\in K_0$ be a point such that $L_a$ is hyperbolic and consider a universal covering map
$\phi_a:\D\rightarrow L_a$ with $\phi_a(0)=a$.  
Define $\tau_r:=\phi_a(rz)$ for $0<r<1$. It is clear
that $\tau_r(\D)$ is relatively compact in $X$ and $\|D\tau_r\|$
is bounded. So, $\tau_r$ belongs to $\Fc_n$ for $n$ large
enough. On the other hand, we have
$$\vartheta^{-1/2}(a)=\|D\phi_a(0)\|=\lim_{r\rightarrow 1} \|D\tau_r(0)\|.$$
Therefore, $\vartheta^{-1/2}=\sup_n \xi_n$ on hyperbolic leaves. 

Let $a\in K_0$ be a point such that $L_a$ is parabolic and consider a map
$\phi_a:\C\rightarrow L_a$ with $\phi_a(0)=a$. Define
$\tau_r(z):=\phi_a(rz)$. It is also clear that $\tau_r$ belongs to $\Fc_n$ for $n$ large
enough and we have
$$\vartheta^{-1/2}(a)=+\infty=\lim_{r\rightarrow \infty } \|D\tau_r(0)\|.$$
It follows that $\vartheta^{-1/2}=\sup_{n\geq 0} \xi_n$
and this completes the proof.
\endproof

Consider now a flow box $\Phi:\U\rightarrow \B\times \T$ as
above. Recall that for simplicity,  we identify $\U$ with $\B\times\T$
and $\T$ with the transversal
$\Phi^{-1}(\{z\}\times\T)$ for some point $z\in \B$.
We have the following result.

\begin{proposition} \label{prop_lusine}
Let  $\nu$ be a positive Radon 
  measure on $\T$. Let
  $\T_1\subset \T$ be a measurable set such that $\nu(\T_1)>0$ and $L_a$ is
  hyperbolic for any $a\in\T_1$. Then for every $\epsilon>0$ there
  is a compact set $\T_2\subset \T_1$ with $\nu(\T_2)>\nu(\T_1)-\epsilon$ 
and a family of universal covering maps $\phi_a:\D\rightarrow L_a$
with $\phi_a(0)=a$ and $a\in \T_2$ that depends continuously on $a$.
\end{proposition}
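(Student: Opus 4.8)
The plan is to build the continuous family of covering maps by a normal-families argument, after first passing to a compact set on which the Poincaré metric (equivalently, the function $\vartheta$) behaves nicely. First I would invoke Lusin's theorem: since by Proposition \ref{prop_poincare_measurable} the function $\vartheta$ is measurable and locally bounded, and $L_a$ is hyperbolic for $a\in\T_1$ (so $\vartheta(a)>0$ there), there is a compact set $\T_2\subset\T_1$ with $\nu(\T_2)>\nu(\T_1)-\epsilon$ on which $\vartheta$ is continuous and bounded away from $0$ and $\infty$. On this compact set $\vartheta^{-1/2}=\|D\phi_a(0)\|$ is a positive continuous function of $a$.

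Next I would use the extremality characterization of Poincaré's metric recalled in the excerpt to pin down the covering maps as solutions of a compact extremal problem. Fix a flow box and, for $a\in\T_2$, consider holomorphic maps $\tau:\D\to X$ with $\tau(0)=a$; among these the universal covering map $\phi_a$ (normalized, say, by $D\phi_a(0)>0$ after a rotation, using that $\B\subset\C$ so the derivative is a scalar) is the unique one maximizing $\|D\tau(0)\|$, with maximal value $\vartheta(a)^{-1/2}$. Given a sequence $a_j\to a$ in $\T_2$, the maps $\phi_{a_j}$ form a normal family: their images eventually lie in relatively compact parts of $X$ on compact subsets of $\D$ (this is where one needs the leaves through $\T_2$ not to escape to infinity too fast, which follows from the uniform lower bound on $\vartheta$ via a Koebe/Schwarz-type estimate controlling $\phi_{a_j}$ on $r\D$ by $\|D\phi_{a_j}(0)\|$ and the geometry of the flow boxes). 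Extracting a locally uniform limit $\phi$, one checks $\phi(0)=a$, $D\phi(0)>0$, $\|D\phi(0)\|=\lim\|D\phi_{a_j}(0)\|=\vartheta(a)^{-1/2}$ by continuity of $\vartheta$ on $\T_2$, and hence by the equality case of extremality $\phi$ is itself a normalized universal covering map of $L_a$, so $\phi=\phi_a$. Since every subsequence has a further subsequence converging to the same limit $\phi_a$, the whole family $a\mapsto\phi_a$ is continuous.

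The main obstacle I expect is the normality/non-escaping step: one must rule out that, as $a$ varies over $\T_2$, the covering maps $\phi_{a_j}$ run off to infinity in $X$ on a fixed compact subset of $\D$, i.e. one needs a uniform (in $a\in\T_2$) bound on how far $\phi_{a_j}(r\D)$ can travel inside $X$. The uniform upper bound on $\vartheta$ on $\T_2$ gives a uniform \emph{lower} bound on the Poincaré metric in terms of the fixed Hermitian metric $\omega$, hence a uniform bound on the $\omega$-diameter of $\phi_{a_j}(\D_R)$ for each fixed hyperbolic radius $R$; combined with the fact that a bounded-diameter connected piece of a leaf through a point of the relatively compact transversal $\T$ stays in a compact part of $X$, this closes the gap. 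The rest — measurable selection to even define $\phi_a$ for all $a\in\T_1$ before restricting, and the rotation normalization — is routine.
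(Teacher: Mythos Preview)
Your overall architecture --- normalize $\phi_a$ by $D\phi_a(0)>0$, use Lusin to make $\vartheta$ continuous on a large compact $\T_2$, then pass to limits via extremality --- matches the paper's, and your endgame (any locally uniform limit of $\phi_{a_j}$ must be $\phi_a$ by the equality case of extremality) is exactly what the paper does once normality is in hand. The genuine gap is the normality step itself, and it is not closed by the argument you sketch. The bound on $\vartheta$ you obtain is only \emph{on the transversal} $\T_2$, i.e.\ at the basepoints $a$, whereas what you need is a lower bound on $\vartheta$ \emph{along the images} $\phi_a(r\D)$. Indeed, since $\phi_a$ is a Poincar\'e isometry one has $\|D\phi_a(z)\|=(1-|z|^2)^{-1}\,\vartheta(\phi_a(z))^{-1/2}$, so bounding $\|D\phi_a\|$ on $r\D$ is \emph{equivalent} to bounding $\vartheta$ from below on $\phi_a(r\D)$; your proposed route is circular. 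Nor does a Koebe-type distortion estimate help: Koebe requires univalence, which $\phi_a$ need not have, and in any case there is no intrinsic distortion theorem for holomorphic maps from $\D$ into an abstract lamination that bounds $\phi_a|_{r\D}$ solely in terms of $\|D\phi_a(0)\|$ and local flow-box geometry. Since $X$ is not assumed compact and $\vartheta$ may approach $0$ near parabolic leaves or near infinity, there is a priori no reason the images $\phi_a(r\D)$ stay in a fixed compact set as $a$ ranges over $\T_2$.

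The paper resolves this by a different mechanism. It works not with $\phi_a$ directly but with the compact approximating families $\Fc_n=\{\tau:\D\to K_n,\ \|D\tau\|_\infty\le n\}$ from the proof of Proposition~\ref{prop_poincare_measurable}, selects (via a measurable-selection theorem) normalized extremizers $\tau_{n,a}\in\Fc_n$ with $\tau_{n,a}(0)=a$ and $\|D\tau_{n,a}(0)\|=\xi_n(a)$, and applies Lusin to make the family $a\mapsto\tau_{n,a}$ continuous on a large compact set. Since $\tau_{n,a}\to\phi_a$ locally uniformly as $n\to\infty$ for each fixed $a$, the sets $\{a:\ \|D\tau_{n,a}\|\le N\text{ on }r\D,\ \tau_{n,a}(r\D)\subset K_N\ \forall n\}$ exhaust as $N\to\infty$, so one can pick $N$ large to capture almost all the measure; this yields, for each $r<1$, a compact $\T_r$ of nearly full measure on which $\phi_a|_{r\D}$ is uniformly bounded. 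Intersecting over $r_n\uparrow 1$ with geometrically decreasing losses gives the final $\T_2$. The point is that the uniform control on $\phi_a|_{r\D}$ is obtained measure-theoretically (Egorov/exhaustion), not from an a priori analytic estimate, and the compact set $\T_2$ must be refined at each scale $r$ rather than fixed once at the outset.
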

\proof
Recall that the universal covering maps $\phi_a:\D\rightarrow L_a$
are obtained from each other by composing with a rotation on
$\D$. For the rest of the proof, denote by $\phi_a$ the universal
covering map such that in the coordinates on the flow box $\U$,
the derivative of $\phi_a$ at 0 is a positive real number. 
We can replace $\T_1$ with some compact set in the
support of $\nu$ in order to assume that
$\T_1$ is compact and contained in the support of $\nu$. 
We will use the notation in the previous proposition with
$K_0$ larger than $\T_1$. By Lusin's theorem, we can replace 
$\T_1$ by a suitable compact set in order to assume that $\xi_n$ and
$\vartheta$ are continuous on $\T_1$.
So, the sequence $\xi_n$ is increasing and converge uniformly to $\vartheta^{-1/2}$ on $\T_1$.

\medskip
\noindent
{\bf Claim.} Let $0<r<1$ and $\delta>0$ be two constants. 
Then, there is a compact set $\T_r\subset \T_1$ with
$\nu(\T_r)>\nu(\T_1)-\delta$ and an integer $N$ such that 
$\|D\phi_a\|\leq N$ on $r\D$ and $\phi_a(r\D)\subset K_N$
for all $a\in \T_r$. 

\medskip

We first explain how to deduce the proposition from the claim. Using this
property for $r_n=1-1/n$, $\delta_n=2^{-n}\epsilon$ with $n\geq 2$, define 
$\T_2:=\cap \T_{r_n}$. It is clear that
$\nu(\T_2)>\nu(\T_1)-\epsilon$ and the family $\{\phi_a\}_{a\in\T_2}$
is locally bounded on $\D$. If $a_n\rightarrow a$ in $\T_2$, since $\vartheta$ is
continuous, any limit value $\phi$ of $\phi_{a_n}$ satisfies
$\|D\phi(0)\|=\vartheta(a)^{-1/2}$. Therefore, $\phi$ is a universal covering
map of $L_a$. We deduce that $\phi$ is equal to $\phi_a$ because
the derivatives of 
$\phi_{a_n}$ and $\phi_a$ are
real positive. Hence, the family $\{\phi_a\}_{a\in\T_2}$ is continuous.

It remains to prove the above claim. 
Let $\Ec_n$ denote the family of $\tau\in \Fc_n$ such that
$a:=\tau(0)$ is in $\T_1$ and $\|D\tau(0)\|=\xi_n(a)$. This family is
not empty since $\Fc_n$ is compact. Let $\Ec_n^+$ be the family of
$\tau\in\Ec_n$ as above such that in the coordinates on the flow box $\U$,
the derivative of $\tau$ at 0 is a positive real number. We can obtain
such a map by composing a map in $\Ec_n$ with an appropriate rotation
on $\D$. The continuity of $\xi_n$ implies that $\Ec_n$ and $\Ec_n^+$ are compact.

The map which associates to $\tau\in\Ec_n^+$ its value at 0 is
continuous. 
We recall that if $f:X_1\rightarrow X_2$ is a continuous surjective map between
two compact metric spaces, then $f$ admits an  inverse measurable
selection, i.e. there is $g:X_2\rightarrow X_1$ measurable such that
$f\circ g$ is identity, \cite[p.82]{Christensen}. So, the map
$\tau\mapsto \tau(0)$ on $\Ec_n^+$ 
admits a measurable inverse map.
More precisely, there is a measurable family $\{\tau_{n,a}\}_{a\in\T_1}\subset
\Ec_n^+$ such that $\tau_{n,a}(0)=a$.
Therefore, the measure $\nu$ on $\T_1$ induces a measure on
$\Ec_n^+$. We can extract from $\{\tau_{n,a}\}_{a\in\T_1}$ a compact
subset of measure almost equal to $\nu(\T_1)$. Hence, there is a compact
set $\T_1'\subset \T_1$ such that $\nu(\T_1')>\nu(\T_1)-\delta$ and
$\{\tau_{n,a}\}_{a\in\T_1'}$ is compact for every $n$. In other
words, the family  $\{\tau_{n,a}\}_{a\in\T_1'}$ depends continuously
on $a\in \T_1'$.

For each $a$ fixed in $\T_1'$, the extremal property of Poincar{\'e}'s
metric implies that $\tau_{n,a}\rightarrow \phi_a$ locally
uniformly when $n\rightarrow\infty$. Define for all positive integer $N$, $\T_{1,N}$ the set of
$a\in\T_1'$ such that $\|D\tau_{n,a}\|\leq N$ on $r\D$ and
$\tau_{n,a}(r\D)\subset K_N$ for all
$n$. This is an increasing sequence of compact sets which converges
to $\T_1'$. So, for $N$ large enough,
$\nu(\T_{1,N})>\nu(\T_1)-\delta$. We can choose a compact subset
$\T_r\subset \T_{1,N}$ such that $\nu(\T_r)>\nu(\T_1)-\delta$.
Clearly, $\T_r$ satisfies the claim.
\endproof

Let $\phi_a:\D\rightarrow L_a$ be a covering map of $L_a$ with $\phi_a(0)=a$. Denote
$L_{a,R}:=\phi_a(\D_R)$, where $\D_R\subset \D$ is the disc of
center 0 and of radius $R$. Here, the radius is with respect to the
Poincar{\'e} metric on $\D$. Since $\phi_a$ is unique up to a
rotation on $\D$, $L_{a,R}$ is independent
of the choice of $\phi_a$. We will need the following result.

\begin{corollary} \label{cor_lusin}
Let $R>0$ be a positive constant.
Then, under the hypothesis of Proposition
  \ref{prop_lusine}, there is  a countable family of compact sets $\S_n\subset \T_1$, $n\geq1$, with
  $\nu(\cup_n \S_n)=\nu(\T_1)$ such that $L_{a,R}\cap \S_n=\{a\}$ for every $a\in \S_n$. Moreover, there are universal covering maps $\phi_a:\D\rightarrow L_a$ with $\phi_a(0)=a$ 
which depend continuously on $a\in\S_n$.
\end{corollary}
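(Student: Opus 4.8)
The plan is to start from the conclusion of Proposition \ref{prop_lusine}: for every $\epsilon>0$ we can find a compact set $\T_2\subset \T_1$ with $\nu(\T_2)>\nu(\T_1)-\epsilon$ together with universal covering maps $\phi_a:\D\rightarrow L_a$, $\phi_a(0)=a$, depending continuously on $a\in\T_2$. Applying this with $\epsilon=\epsilon_k\to 0$ and taking a countable union, it suffices to produce, for each such compact $\T_2$, a \emph{further} countable decomposition of (a full-measure subset of) $\T_2$ into compact pieces $\S_n$ on which the extra separation property $L_{a,R}\cap \S_n=\{a\}$ holds; the continuity of $a\mapsto\phi_a$ is then simply inherited by restriction. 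So the whole issue is the local separation of the transversal from the $R$-balls of the leaves through its points.

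The key geometric input is that the map $(a,z)\mapsto \phi_a(z)$ is continuous on $\T_2\times\overline{\D_R}$ (with values in $X$), by the continuity of $a\mapsto\phi_a$ and local uniform convergence, and that $\overline{\D_R}$ is compact. First I would fix a point $a_0\in\T_2$. The set $\phi_{a_0}(\partial\D_R)$ is a compact subset of the leaf $L_{a_0}$ not containing $a_0$ (note $a_0=\phi_{a_0}(0)$ and $0\notin\partial\D_R$), so it stays at positive distance from $a_0$ in the ambient metric of $X$. By continuity of $(a,z)\mapsto\phi_a(z)$, there is a relatively open neighbourhood $\T_{a_0}$ of $a_0$ in $\T_2$ and a small ball $U$ around $a_0$ in a flow box such that for all $a\in\T_{a_0}$ the image $\phi_a(\overline{\D_R})$ meets $U$ only in a single plaque-like piece through $\phi_a$; more precisely one arranges that $\phi_a(\overline{\D_R})\cap\T_{a_0}$ is contained in the plaque of the flow box through $\phi_a(0)=a$, hence, since a transversal meets each plaque in exactly one point, $\phi_a(\overline{\D_R})\cap\T_{a_0}=\{a\}$. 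This gives, for every $a_0\in\T_2$, a relatively open $\T_{a_0}\ni a_0$ with $L_{a,R}\cap\T_{a_0}=\{a\}$ for all $a\in\T_{a_0}$. Covering $\T_2$ by countably many such $\T_{a_0}$ (possible since $\T_2$ is second countable, being a compact metric space) and shrinking each to a compact subset whose interiors still cover $\T_2$, one obtains countably many compact sets $\S_n$ with the required property and $\nu(\bigcup_n\S_n)=\nu(\T_2)$.

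I would then assemble the pieces: run the above over the exhausting family from Proposition \ref{prop_lusine} with $\epsilon_k=2^{-k}$, obtaining compact sets $\T_2^{(k)}$ with $\nu(\T_2^{(k)})>\nu(\T_1)-2^{-k}$, decompose each into countably many compact $\S_n^{(k)}$ as above, and take the countable family $\{\S_n^{(k)}\}_{k,n}$. Its union has $\nu$-measure $\nu(\T_1)$, each $\S_n^{(k)}$ is compact with $L_{a,R}\cap\S_n^{(k)}=\{a\}$ for $a\in\S_n^{(k)}$, and the covering maps $\phi_a$ restrict continuously to each $\S_n^{(k)}$. This yields the corollary.

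\textbf{Main obstacle.} The delicate point is the step where one claims $\phi_a(\overline{\D_R})\cap\T_{a_0}=\{a\}$ for $a$ near $a_0$: this requires controlling the \emph{entire} image $\phi_a(\overline{\D_R})$ uniformly in $a$, using the joint continuity of $(a,z)\mapsto\phi_a(z)$ on the compact set $\T_{a_0}\times\overline{\D_R}$, and then using that the transversal $\T_{a_0}$ is locally contained in a single transversal of a flow box so that it can meet each plaque at most once. One must be a little careful that $\phi_a(\overline{\D_R})$, while compact, may pass through many flow boxes, so the separation argument should be localized near $a_0$: only the part of $\phi_a(\overline{\D_R})$ that is close to $a_0$ can possibly meet $\T_{a_0}$, and that part lies (for $a$ close to $a_0$) in one flow box, in the plaque through $a$.
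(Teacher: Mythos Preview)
There is a genuine gap in the separation step. You claim that for \emph{every} $a_0\in\T_2$ one can find a relatively open neighbourhood $\T_{a_0}\subset\T_2$ with $L_{a,R}\cap\T_{a_0}=\{a\}$ for all $a\in\T_{a_0}$, and you justify it by saying that the part of $\phi_a(\overline{\D_R})$ near $a_0$ lies in the single plaque through $a$. That last assertion is exactly what can fail. The finite set $\phi_{a_0}^{-1}(a_0)\cap\D_R$ may contain points $w\neq 0$; this happens whenever $L_{a_0}$ carries a homotopically non-trivial loop based at $a_0$ of Poincar\'e length $<R$. For $a$ close to $a_0$ the point $\phi_a(w)$ lands in the flow box near $a_0$, but the holonomy of that loop generally places it in a \emph{different} plaque from the one through $a=\phi_a(0)$. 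The corresponding transversal point belongs to $L_{a,R}\cap\T$, lies arbitrarily close to $a_0$, and there is no reason it should avoid $\T_{a_0}$. Your observation that $\phi_{a_0}(\partial\D_R)$ stays away from $a_0$ only ensures that $\phi_a^{-1}(U)\cap\overline{\D_R}$ is contained in $\D_R$; it does not force this preimage to be connected, which is what you would need.

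The paper does not attempt to separate around an arbitrary $a_0$. Instead it sets $\Sigma_a:=L_{a,R}\cap\T'$ for a fixed neighbourhood $\T'$ of $\T_2$, observes that $\#\Sigma_a$ is uniformly bounded on the support $\T_2'$ of $\nu|_{\T_2}$ and lower semi-continuous in $a$, and then picks $a_0\in\T_2'$ at which $\#\Sigma_{a_0}$ is \emph{maximal}. Maximality forces $\#\Sigma_a=\#\Sigma_{a_0}$ nearby, so $\Sigma_a$ varies continuously and no new intersection points can appear near $a_0$; one then obtains a single compact $\S$ with $\nu(\S)>0$ and the separation property, and a routine exhaustion yields the full-measure countable family. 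The missing idea in your argument is precisely this maximality trick to suppress the holonomy returns.
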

\proof
We first show that there is a compact set $\S\subset \T_2$ with $\nu(\S)>0$ such that
$L_{a,R}\cap \S=\{a\}$ for every $a\in \S$. By Proposition \ref{prop_lusine}, the last assertion in the corollary holds for $\S$. 
Consider $\T_2'$ the support of the restriction of $\nu$ to $\T_2$ and an open neighbourhood $\T'$
of $\T_2$ which is relatively compact in $\T$. By Proposition
\ref{prop_lusine}, the number of points in $\Sigma_a:=L_{a,R}\cap\T'$ is bounded independently on $a\in\T_2'$ because the minimal number plaques covering $\overline L_{a,R}$ is bounded.  
Fix an $a_0\in \T_2'$ such that 
$\#\Sigma_{a_0}$ is maximal. Also by Proposition \ref{prop_lusine}, 
$\#\Sigma_a$ is lower semi-continuous on
$a\in \T_2'$. The maximality of $\#\Sigma_{a_0}$ implies that if $V$
is a neighbourhood of $a$, small enough, $\#\Sigma_a=\#\Sigma_{a_0}$
for $a\in \T_2'\cap \overline V$. It follows that $\Sigma_a$ depends
continuously on $a\in \T_2'\cap \overline V$. We then deduce that if $V$ is small enough,
$\Sigma_a\cap \overline V=\{a\}$ for  $a\in \T_2'\cap \overline V$. It is enough
to take $\S:=\T_2'\cap\overline V$; we have $\nu(\S)>0$ by definition of $\T_2'$. 

Consider now the family $\Gc$ of all countable unions $G$ of such compact sets $\S$. 
Let $\lambda$ denote the supremum of $\nu(G)$ for $G\in\Gc$. So, there is a sequence $G_n$ in $\Gc$ 
such that $\nu(G_n)\to \lambda$.  The union $G_\infty:=\cup_n G_n$ is also an element of $\Gc$. So, we have $\nu(G_\infty)=\lambda$. Now, it is enough to check that $\lambda=\nu(\T_1)$. If not, we have $\nu(\T_1\setminus G_\infty)>0$. Hence, we can apply the above construction of $\S$ in $\T_1\setminus G_\infty$ instead of $\T_1$. We necessarily have  $\nu(G_\infty\cup\S)>\lambda$ which is a contradiction. So, we can choose compact sets $\S_n$ satisfying the corollary with $\cup \S_n=G_\infty$.
\endproof

\section{Laminations with singularities} \label{section_singular}

We call {\it Riemann surface lamination with singularities} the data
$(X,\Lc,E)$ where $X$ is a locally compact space, $E$ a closed
subset of $X$ and $(X\setminus E,\Lc)$ is a Riemann surface
lamination. The set $E$ is {\it the singularity set} of the lamination.
In order to simplify the presentation, we will mostly consider the
case where $X$ is a closed 
subset of a
complex manifold $M$ of dimension $k\geq 1$ and $E$ is a locally finite subset
of $X$. 
We assume that $M$ is endowed with a Hermitian metric $\omega$. 
We also assume that the complex structures on
the leaves of the foliation coincide with the ones induced by $M$,
that is, the leaves of $(X\setminus E,\Lc)$ are Riemann surfaces holomorphically immersed in
$M$. 
The main example we have in mind is a foliation by Riemann surfaces in
the projective space $\P^k$ described in the introduction.

\begin{proposition} \label{prop_current_extension}
Let $(X,\Lc,E)$ be a lamination with isolated singularities
in a complex manifold $M$ as above. Let $T$ be a positive harmonic
current of $X\setminus E$. Then the linear form
$\alpha\mapsto \langle T,\alpha_{|X}\rangle$ for  
$\alpha\in \Dc^{1,1}(M\setminus E)$
defines a positive $\ddbar$-closed current on $M\setminus
E$. Moreover, it has locally finite mass on $M$ and when $X$ is compact, the extension of $T$ by zero, always denoted by $T$, is a
positive $\ddbar$-closed current of bidimension $(1,1)$ on $M$.
\end{proposition}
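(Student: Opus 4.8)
The statement has three parts, which I would handle in turn: (a) that $T$ extends to a positive $\ddbar$-closed current on $M\setminus E$; (b) that this extension has locally finite mass near each point of $E$; (c) that, when $X$ is compact, the trivial extension is $\ddbar$-closed on $M$.

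\textbf{Part (a).} This is local and the singularities play no role. In a flow box $\U\simeq\B\times\T$ of $X\setminus E$ relatively compact in $M\setminus E$, the plaques $P_a$ are holomorphic discs in $M$. By Proposition \ref{prop_current_local_c}, $T$ is a normal current, in particular of order $0$, and on $\U$ it writes $\langle T,\alpha\rangle=\int_\T\big(\int_{P_a}h_a\,\alpha\big)\,d\mu(a)$ with $\mu\ge0$ a Radon measure and $h_a>0$ harmonic on $P_a$ for $\mu$-a.e.\ $a$, and with $\langle\mu,\|h_a\|_{L^1(K)}\rangle<\infty$ on compacts. Since $T$ is of order $0$ and $\alpha\mapsto\alpha_{|X}$ is continuous from $\Dc^{1,1}(M\setminus E)$ to $\Dc^{1,1}(X\setminus E)$, the assignment $\alpha\mapsto\langle T,\alpha_{|X}\rangle$ is a well-defined current on $M\setminus E$, globally defined because $T$ itself is global on $X\setminus E$; locally it is, up to a measurable average against $\mu$, a superposition of the positive bidimension-$(1,1)$ currents $h_a[P_a]$, hence positive and of locally finite mass. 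For the $\ddbar$-closedness: for $f\in\Dc^0(M\setminus E)$ one has, since $\ddc$ commutes with restriction to holomorphic plaques, $\langle\ddc T,f\rangle=\langle T,(\ddc f)_{|X}\rangle=\langle T,\ddc(f_{|X})\rangle=\langle\ddc T,f_{|X}\rangle=0$, because $T$ is harmonic on the lamination $X\setminus E$.

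\textbf{Part (b), the main point.} Fix $p\in E$ and a coordinate ball $B_{r_0}\subset\C^k$ centered at $p$ with $E\cap B_{r_0}=\{p\}$; by (a), $T$ restricts to $B_{r_0}\setminus\{p\}$ as a positive $\ddbar$-closed current of bidimension $(1,1)$, directed by the lamination. Since $\omega$ is comparable to $\beta:=\ddc\|z\|^2$ on the chart, it suffices to bound $\|T\wedge\beta\|_{B_{r_1}}$ for $r_1<r_0$. For $0<s<r<r_0$ the annulus $B_r\setminus\overline{B_s}$ lies in $B_{r_0}\setminus\{p\}$, where $\ddc T=0$, so Stokes on the annulus gives (with the integrals over spheres understood via slicing $T$ by $\|z\|^2$)
$$\|T\wedge\beta\|_{B_r\setminus\overline{B_s}}=\int_{\partial B_r}T\wedge\dc\|z\|^2-\int_{\partial B_s}T\wedge\dc\|z\|^2=g(r)-g(s).$$
Hence $g$ is non-decreasing on $(0,r_0)$, and $\|T\wedge\beta\|_{B_{r_1}\setminus\{p\}}=g(r_1)-g(0^+)$ is finite precisely when $g$ is bounded below near $0$. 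Proving this lower bound is the crux and, I expect, the main obstacle: for a general positive $\ddbar$-closed $(1,1)$-current with an isolated singularity it can fail, whereas here $g(\rho)=\int_{\partial B_\rho}T\wedge\dc\|z\|^2$ is, by the local structure of $T$ and Stokes along the (holomorphic) leaf pieces, a weighted integral of areas of leaves intersected with $B_\rho$, hence $\ge0$; making this rigorous requires controlling how the leaves of $X\setminus\{p\}$ accumulate at $p$ — using that $X$ is closed and $X\setminus\{p\}$ is honestly laminated, in the spirit of the compactness arguments of Propositions \ref{prop_poincare_measurable}--\ref{prop_lusine}, together with the positivity of the weights $h_a$. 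Granting $g\ge0$, we obtain $\|T\|_{B_{r_1}}<\infty$ near every $p\in E$, so the extension of $T$ by $0$ across $E$ — defined by $\langle T,\alpha\rangle:=\lim_j\langle T,\chi_j\alpha\rangle$ for cut-offs $\chi_j\uparrow1$ vanishing near $E$, the limit being independent of $\{\chi_j\}$ by finiteness of the mass — is a well-defined positive current of bidimension $(1,1)$ on $M$ with locally finite mass.

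\textbf{Part (c).} Assume $X$ compact; then $E$ is finite (discrete and compact) and the extended current $T$ has compact support in $M$. By construction $\ddc T=0$ on $M\setminus E$, so in particular $\ddc T$ is a negative measure there; since $E$ is finite, Lemma \ref{lemma_ds} applies to $T$ and yields $\ddc T=0$ on all of $M$. Hence $T$ is a positive $\ddbar$-closed current of bidimension $(1,1)$ on $M$, which completes the proof.
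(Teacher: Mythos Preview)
Your Parts (a) and (c) are correct and match the paper: the paper also uses the local description of $T$ together with the observation that restriction to holomorphic plaques commutes with $\ddbar$, and it invokes Lemma \ref{lemma_ds} exactly as you do for the final step.

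The divergence is in Part (b). The paper does not attempt any direct Stokes/monotonicity argument; it simply cites \cite{FSW} for the fact that a positive current $T$ of bidimension $(1,1)$ with $\ddc T\le 0$ off a finite set has locally finite mass across that set. In particular, the paper treats this as a general property of positive $\ddbar$-closed $(1,1)$-currents near isolated points, with no appeal to the lamination structure. Your belief that ``for a general positive $\ddbar$-closed $(1,1)$-current with an isolated singularity it can fail'' is thus at odds with the paper's viewpoint (and with the result it quotes), and it is what leads you to try to exploit the foliated structure to control $g(\rho)$ from below---an argument you yourself flag as incomplete. Concretely, your slicing formulation $g(\rho)=\int_{\partial B_\rho}T\wedge\dc\|z\|^2$ is delicate to justify for a current defined only on the punctured ball, and the heuristic that $g$ is a ``weighted integral of leaf areas, hence $\ge 0$'' would require uniform control on how plaques meet small spheres, which is exactly the kind of information one does not have near a singularity.

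So the gap is real but easily closed: replace your Part (b) by a one-line citation of \cite{FSW} (as the paper does), and drop the attempt to use the lamination near $p$. Everything else stands.
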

\proof
Observe that $\alpha_{|X}$ is smooth with compact support in
$X\setminus E$ which is positive (resp. $\ddbar$-exact) if $\alpha$ is
positive (resp. $\ddbar$-exact). Therefore, using a partition of unity and the local description of $T$, we see that 
$T$ defines a positive $\ddbar$-closed current on $X\setminus E$. 
Since $E$ is finite, $T$ has locally finite mass on $M$, see \cite{FSW}. If moreover $X$ is compact, the
extension of $T$ by zero is a positive $\ddbar$-closed current on $M$, see Lemma \ref{lemma_ds}. 
\endproof

So, if $T$ is a positive harmonic current on $X\setminus E$, its mass with
respect to the Hermitian metric on $M$ is locally finite. 
We call {\it Poincar{\'e}'s mass} of $T$ the mass of $T$ with respect to
Poincar{\'e}'s metric $\omega_P$ on $X\setminus E$, i.e. the mass of the
positive measure $m_P:=T\wedge\omega_P$.
A priori, Poincar{\'e}'s mass may be
infinite near the singular points. The following proposition
gives us a criterion for the finiteness of this mass. It can be applied
to generic foliations in $\P^k$.

We say that a vector field $F$ on $\C^k$ is {\it generic linear} if
it can be written as 
$$F(z)=\sum_{j=1}^k \lambda_j z_j {\partial\over \partial z_j}$$
where $\lambda_j$ are non-zero complex numbers.
The integral curves of $F$ define a Riemann surface foliation on
$\C^k$. 
The condition $\lambda_j\not=0$ implies that the foliation   
has an isolated singularity at 0. Consider a lamination $X$ with isolated
singularities $E$ in a manifold $M$ as above. We say that a
singular point $a$ of $X$ is {\it linearizable} if 
there is a local holomorphic coordinates system of $M$ near $a$ on which the
leaves of $X$ are integral curves of a generic linear vector field.

\begin{proposition} \label{prop_poincare_mass}
Let $(X,\Lc,E)$ be a compact lamination with isolated singularities
in a complex manifold $M$. If $a$ is a linearizable singularity of $X$, 
then any positive harmonic current on $X$ has locally finite
Poincar{\'e}'s mass near $a$.
\end{proposition}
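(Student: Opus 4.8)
The plan is to reduce everything to a local model computation near the linearizable singularity $a$ and to use Skoda's inequality (Lemma \ref{lemma_skoda}) together with an explicit description of the Poincaré metric on the leaves of a generic linear vector field. First I would choose local holomorphic coordinates $z = (z_1,\dots,z_k)$ centered at $a$ in which the leaves of $X$ are integral curves of $F(z) = \sum_{j=1}^k \lambda_j z_j\,\partial/\partial z_j$ with all $\lambda_j \neq 0$, and fix a small polydisc or ball $B$ around $0$. On $B\setminus\{0\}$ the current $T$ is a positive $\ddbar$-closed (i.e. harmonic) current of bidimension $(1,1)$, directed by the foliation, with locally finite mass by Proposition \ref{prop_current_extension}; since $\{0\}$ is a point (hence pluripolar and of zero mass), $T$ extends to a positive $\ddbar$-closed current on $B$ and Lemma \ref{lemma_skoda} applies, giving in particular that the Lelong number of $T$ at $0$ is finite and the mass $\|T\|_{B_r}$ is $O(r^2)$ as $r\to 0$.

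The heart of the matter is to compare $\omega_P$ with $\omega$ near $0$ along the leaves. On a leaf $L$ of the linear foliation through a point $z^0$ with all coordinates nonzero, a uniformizing parameter is essentially $u \mapsto (z_1^0 e^{\lambda_1 u},\dots,z_k^0 e^{\lambda_k u})$ defined on a half-plane or strip, so $L$ is either parabolic (a quotient of $\C$) or hyperbolic, and in the hyperbolic case the Poincaré density can be estimated. The key estimate I would aim for is that, along the leaves, near $0$ one has a bound of the form
$$\omega_P \leq \frac{C}{\big(\sum_j |z_j|\big)^2 \big(\log \sum_j |z_j|\big)^2}\,\omega$$
(or a comparable bound in terms of $\|z\|$ and $\log\|z\|$), reflecting that the leaves spiral into $0$ and a leafwise disc of fixed Poincaré radius has Euclidean size comparable to its distance to $0$ divided by a logarithmic factor — this is exactly where the $\log$-improvement over the naive $\|z\|^{-2}$ comes from and is what makes the mass finite. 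Granting such an estimate, the Poincaré mass near $0$ is controlled by
$$\int_{B} \frac{1}{\|z\|^2 (\log\|z\|)^2}\,T\wedge\omega \;=\; \sum_{j\geq 0}\int_{2^{-j-1}<\|z\|\leq 2^{-j}} \frac{1}{\|z\|^2(\log\|z\|)^2}\,T\wedge\omega \;\lesssim\; \sum_{j\geq 0}\frac{2^{2j}}{j^2}\,\|T\|_{B_{2^{-j}}},$$
and since Skoda gives $\|T\|_{B_{2^{-j}}}\lesssim 2^{-2j}$, the series is dominated by $\sum_j j^{-2} < \infty$.

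The main obstacle I expect is the leafwise Poincaré metric estimate: one must uniformize the leaves of the linear model, handle the (measure-zero in the transverse direction, but genuinely present) parabolic leaves — the coordinate hyperplanes $\{z_j = 0\}$ and the special leaves where resonances make $L$ parabolic — and get a bound that holds uniformly on the transversal so that it can be integrated against the disintegration of $T$ from Proposition \ref{prop_struc_current}. Concretely I would cover $B\setminus\{0\}$ by dyadic annuli, on each annulus use the explicit covering map of the leaf restricted to a suitable sub-strip to compare $\omega_P$ with $\omega$, and then assemble the pieces; the extremality property of the Poincaré metric recalled in Section \ref{section_riemann} (that $\|D\tau(0)\| \leq \vartheta^{-1/2}$ for any holomorphic disc $\tau$) gives the needed upper bound on $\omega_P = 4\vartheta\omega$ by exhibiting, through each point near $0$, a holomorphic disc into the leaf of Euclidean size $\gtrsim \|z\|\,|\log\|z\||^{-1}$, which bounds $\vartheta$ from above by the square of the reciprocal. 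The rest is the summation above.
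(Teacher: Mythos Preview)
Your approach is essentially the paper's: reduce to the linear model, bound $\vartheta(z)\lesssim \|z\|^{-2}|\log\|z\||^{-2}$ via the extremality of the Poincar\'e metric by exhibiting an explicit holomorphic disc in the leaf (this is exactly Lemma~\ref{lemma_poincare}), and then combine with Skoda's estimate $\|T\|_{B_r}\lesssim r^2$; the paper finishes with an integration by parts where you use a dyadic sum, which is equivalent. One slip to fix: in your last sentence the disc you exhibit must have derivative $\gtrsim \|z\|\,|\log\|z\||$ (not $\|z\|\,|\log\|z\||^{-1}$), consistent with the bound on $\omega_P$ you stated earlier --- indeed, the map $\zeta\mapsto(a_je^{\lambda_j c\zeta\log\|a\|})$ on $\D$ does this for a small enough constant $c$. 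Finally, your worry about parabolic leaves and disintegration is unnecessary: the bound on $\vartheta$ is pointwise in $z$ and uniform over all leaves (on parabolic leaves $\vartheta=0$ anyway), so one integrates the radial function directly against $T\wedge\beta$ without disintegrating.
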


Fix a positive harmonic current $T$ on $X$. So, by  Proposition \ref{prop_current_extension}, we can identify $T$  with a positive $\ddbar$-closed current on $M$. For the rest of the proof, 
we don't need  the compactness of $M$.
Since the Poincar{\'e} metric increases when we replace $M$ with an open
subset, it is enough to consider the case where $M$ is the
polydisc $(2\D)^k$ in $\C^k$ and $X$ is the lamination associated with the
vector field 
$$F(z)=\sum_{j=1}^k \lambda_jz_j{\partial\over \partial z_j}$$
where $\lambda_j=s_j+it_j$ and $s_j,t_j\in\R$. 
We need the following lemma.

\begin{lemma} \label{lemma_poincare}
For every point $a\in \D^k\setminus \{0\}$, 
there is a holomorphic map $\tau:\D\rightarrow L_a\cap (2\D)^k$ such that
$\tau(0)=a$ and $\|D\tau(0)\|\geq c\|a\||\log\|a\||$ where $c>0$ is a
constant independent of $a$.
\end{lemma}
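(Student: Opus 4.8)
The plan is to exhibit an explicit holomorphic map from $\D$ into the leaf $L_a$ of the generic linear foliation and estimate its derivative at the origin, using the fact that $\|D\tau(0)\|$ controls the Poincar\'e density from below (the extremal property of Poincar\'e's metric recalled in Section \ref{section_riemann}). The leaves of $F(z)=\sum_j\lambda_j z_j\partial/\partial z_j$ through a point $a=(a_1,\dots,a_k)$ with all $a_j\ne 0$ are parametrized by $\zeta\mapsto(a_1e^{\lambda_1\zeta},\dots,a_ke^{\lambda_k\zeta})$ for $\zeta\in\C$; more generally, when some coordinates of $a$ vanish one simply drops those components, so without loss of generality I treat the case $a_j\ne0$ for all $j$, the other cases being analogous (and in fact easier, since fewer exponentials must be kept inside $(2\D)^k$). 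First I would fix a point $a$ with $0<\|a\|<1$ and set $\tau(\zeta):=(a_1e^{\lambda_1 r\zeta},\dots,a_ke^{\lambda_k r\zeta})$ for a radius $r=r(a)>0$ to be chosen, so that $\tau(0)=a$ and $\tau$ maps $\D$ into $L_a$.

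The main point is to choose $r$ as large as possible subject to $\tau(\D)\subset(2\D)^k$. Since $|\tau_j(\zeta)|=|a_j|\,e^{\,r\,s_j\,\Re\zeta - r\,t_j\,\Im\zeta}\le|a_j|\,e^{\,r|\lambda_j|}$ for $\zeta\in\D$, the condition $|a_j|\,e^{\,r|\lambda_j|}\le 2$ for every $j$ suffices. Writing $\Lambda:=\max_j|\lambda_j|$, this holds as soon as $r\le r(a):=\min_j\frac{\log(2/|a_j|)}{|\lambda_j|}$, and since $|a_j|\le\|a\|<1$ we get $r(a)\ge\frac{\log(2/\|a\|)}{\Lambda}\ge\frac{|\log\|a\||}{\Lambda}$ for $\|a\|$ small, enlarging the constant if necessary to absorb the bounded range of $\|a\|$. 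With this choice, $D\tau(0)$ is the vector $(\lambda_1 r a_1,\dots,\lambda_k r a_k)$, whose norm with respect to the fixed Hermitian metric $\omega$ (equivalent on the relatively compact polydisc to the Euclidean one) satisfies $\|D\tau(0)\|\ge c_0\, r\,\bigl(\sum_j|\lambda_j a_j|^2\bigr)^{1/2}\ge c_0\,r\,\min_j|\lambda_j|\cdot\|a\|$. Combining with $r=r(a)\gtrsim|\log\|a\||$ yields $\|D\tau(0)\|\ge c\,\|a\|\,|\log\|a\||$ with $c>0$ independent of $a$, as claimed.

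The only mild subtlety — and the step I would treat most carefully — is the reduction when $a$ has some vanishing coordinates: there the leaf through $a$ lies in a coordinate subspace, the exponential map uses only the surviving indices, and one must check that $\min_j$ ranges only over the indices $j$ with $a_j\ne0$; since $\|a\|$ is then comparable to $\max\{|a_j|:a_j\ne0\}$ the same estimates go through verbatim. Everything else is a direct computation, so I expect no real obstacle; the lemma then feeds into Proposition \ref{prop_poincare_mass} because $\|D\tau(0)\|\le\vartheta(a)^{-1/2}$ gives $\vartheta(a)\le c^{-2}\|a\|^{-2}|\log\|a\||^{-2}$, hence $\omega_P=4\vartheta\omega\lesssim\|a\|^{-2}|\log\|a\||^{-2}\,\omega$ near the singular point, which is integrable against a positive $\ddbar$-closed current of bounded Lelong number by Lemma \ref{lemma_skoda}.
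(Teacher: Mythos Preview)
Your proof is correct and follows essentially the same strategy as the paper: parametrize the leaf by $\zeta\mapsto (a_je^{\lambda_j r\zeta})_j$, choose the scaling $r\gtrsim|\log\|a\||$ so that the image stays in the polydisc, and read off the derivative at $0$. The paper argues geometrically that $\psi_a^{-1}(\D^k)$ is a convex polygon containing a disc of radius $\gtrsim -\log\|a\|$, whereas you use the cruder pointwise bound $|e^{\lambda_j r\zeta}|\le e^{r|\lambda_j|}$; both lead to the same choice of $r$ and the same final estimate $\|D\tau(0)\|\ge r\min_j|\lambda_j|\,\|a\|$.
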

\proof
We only have to consider $a$ very close to 0. 
Let $\psi_a:\C\rightarrow\C^k\setminus\{0\}$ be the holomorphic map defined by
$$\psi_a(\xi):=\Big(a_1e^{\lambda_1\xi},\ldots,a_ke^{\lambda_k\xi}\Big)\quad \mbox{for}\quad \xi\in\C$$
where $a_j$ are the coordinates of $a$. We have $\psi_a(0)=a$ and $\psi_a(\C)$ is an integral curve of $F$. 
Write $\xi=u+iv$ with $u,v\in\R$. The domain $\psi_a^{-1}(\D^k)$ in $\C$ is defined by the inequalities
$$s_ju-t_jv\leq -\log|a_j| \quad \mbox{for}\quad j=1,\ldots,k.$$
So, $\psi_a^{-1}(\D^k)$ is a convex polygon (not necessarily
bounded)
which contains $0$ since $\psi_a(0)=a\in \D^k$. Observe that the 
distance between 0 and the line $s_ju-t_jv=-\log|a_j|$ is proportional to $-\log|a_j|$. Therefore, 
$\psi_a^{-1}(\D^k)$ contains a disc of center 0 and of radius 
$$c'\min\{-\log|a_1|,\ldots, -\log|a_k|\}\geq -c'\log\|a\|$$
for some constant $c'>0$ independent of $a$. 

Define the map $\tau:\D\rightarrow \C^l$ by
$\tau(\xi):=\psi_a(-c'\log\|a\|\xi)$. It is clear that $\tau(0)=a$ 
and $\tau(\D)\subset L_a\cap\D^k$. 
We also have
$$\|D\tau(0)\|=-c'\log\|a\| \|D\psi_a(0)\|\geq -c'\log\|a\|\min_j |\lambda_j|\|a\|.$$
The lemma follows.
\endproof

\noindent
{\bf Proof of Proposition \ref{prop_poincare_mass}.}
We use in $\C^k$ the standard K{\"a}hler metric $\beta:=i\ddbar\|z\|^2$. 
Recall that $\omega_P=4\vartheta\beta$. Lemma \ref{lemma_poincare} implies
that $\vartheta(a)\lesssim \|a\|^{-2}|\log\|a\||^{-2}$.
Let $T$ be a positive $\ddbar$-closed
current on $(2\D)^2$. We only have to show that the integral on the
the ball $B_{1/2}$, with respect to the measure $T\wedge\beta$, of
the radial function 
$\widetilde\vartheta(r):=r^{-2}|\log r|^{-2}$ is finite. 
Let $m(r)$ denote the mass of $T\wedge \beta$ on $B_r$. 
By Lemma \ref{lemma_skoda}, we have $m(r)\lesssim r^2$. 
Using an integration by parts, the considered integral is equal, up to finite constants, to
$$-\int_0^1 m(r)\widetilde\vartheta'(r)dr$$
It is clear that the last integral is finite. The proposition follows. 
\hfill $\square$

\bigskip

We have few remarks. It is shown in Candel-G{\'o}mez-Mont\cite{CandelGomezMont}, see also \cite{FornaessSibony2} that when $a$ is a hyperbolic singularity, $\vartheta(z)\to \infty$ when $z\to a$.  
We only consider in
this paper Poincar{\'e}'s metric on the regular part of the
lamination. It is quite often that some leaves of the lamination can
be compactified near singular points by adding these points
and sometimes it is natural to
consider the Poincar{\'e} metric of the extended leaves. Since
Poincar{\'e}'s metric decreases when we extend these leaves, several
results we obtain also apply to extended leaves as well. 

If $\T$ is a transversal in $X$ then all positive harmonic currents on $(X,\Lc,E)$ have finite mass near $\T$ with respect to the Poincar{\'e} metric on the leaves of $X\setminus (E\cup\T)$. This may give us a technical tool in order to study parabolic leaves. Another situation where we have currents with finite Poincar{\'e} mass is the following. Let $\pi:M'\rightarrow M$ be a proper finite holomorphic map and $(X',\Lc',E')$ 
be a compact Riemann surface lamination on $M'$ with isolated singularities which is the pull-back of a lamination $(X,\Lc,E)$ with linearizable singularities as above. If $T'$ is a positive harmonic current on $M'$, then its Poincar{\'e} mass is bounded by the 
Poincar{\'e} mass of the positive harmonic current $\pi_*(T')$ of $X$ since $\pi$ contracts the Poincar{\'e} metric.

We have the following properties of positive harmonic currents, see also \cite[Th. 3.14]{FornaessSibony1} for the last assertion.
 
\begin{proposition} \label{prop_choquet_c}
Let $(X,\Lc,E)$ be a compact Riemann surface 
lamination with isolated singularities in a Hermitian complex manifold $(M,\omega)$. 
Let $\Gc$ be the family of positive harmonic currents of mass $1$ on $X$.
Then, $\Gc$ is a non-empty compact simplex and for any $T\in\Gc$ there 
is a unique probability measure $\nu$ on the set of extremal elements in $\Gc$ such that 
$T=\int T' d\nu(T')$. Moreover, two different extremal elements in $\Gc$ are mutually singular.
\end{proposition}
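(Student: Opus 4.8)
The plan is to check, in turn, that $\Gc$ is non-empty, that it is convex and compact, that it is a Choquet simplex, and that its extreme points are pairwise mutually singular.

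Non-emptiness is already known: a positive harmonic current on $X$ is produced by the Hahn--Banach / averaging constructions of Garnett \cite{Garnett} and of \cite{BerndtssonSibony,FornaessSibony1}, and one normalizes its mass to $1$. Convexity is clear from the local description of harmonic currents. For compactness, Proposition \ref{prop_current_extension} identifies each $T\in\Gc$ with a positive $\ddbar$-closed current of bidimension $(1,1)$ on the compact manifold $M$; the set of positive $(k-1,k-1)$-currents of mass $\leq 1$ on $M$ is weak-$*$ compact and metrizable (the space of test forms being separable), and inside it the conditions ``$\ddbar T=0$'', ``$T$ is directed by $\Lc$'' and ``$\langle T,\omega\rangle=1$'' are weak-$*$ closed, the last because it is a continuous linear constraint. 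Hence $\Gc$ is compact, and Choquet's theorem yields, for every $T\in\Gc$, a probability measure $\nu$ carried by $\partial_e\Gc$ with barycenter $T$.

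The main point is the uniqueness of $\nu$, i.e. that $\Gc$ is a simplex; I would deduce it from the theory of harmonic measures. Fix on $X\setminus E$ the leafwise metric $g$ whose area form is $\omega$, with leafwise Laplacian $\Delta$. In complex dimension one $\ddc$ is conformally invariant, so there is a leafwise identity $(\Delta u)\,\omega=c\,\ddc u$ with $c$ a constant; consequently $\langle\Delta u,T\wedge\omega\rangle=c\,\langle\ddc T,u\rangle=0$ for every leafwise test function $u$, while conversely a positive current directed by the Riemann surface lamination is simply a measure weighted along the leaves. Thus $T\mapsto m_T:=T\wedge\omega$ is an affine homeomorphism of the cone $\Hc$ of positive harmonic currents on $X$ onto the cone of positive $\Delta$-harmonic measures on $X$, injectivity coming from the uniqueness of disintegration (Proposition \ref{prop_struc_current_unique}) together with the fact that harmonic currents carry no mass on the finite set $E$ (Proposition \ref{prop_current_extension}). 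Now the space of harmonic probability measures of a (possibly singular) Riemann surface lamination is a compact Choquet simplex whose extreme points are the ergodic ones, and two distinct ergodic harmonic measures are mutually singular: this is the laminated version of the classical ergodic-decomposition theorem, valid here by \cite{BerndtssonSibony,FornaessSibony1} (see also \cite[Th. 3.14]{FornaessSibony1}, and it also follows from the heat-diffusion results developed later in this paper). Transporting this back through the isomorphism $T\mapsto m_T$ shows that $\Gc$ is a simplex, that $\partial_e\Gc=\{T\in\Gc:\ m_T\ \text{is ergodic}\}$, and --- since $T\mapsto m_T$ is injective --- that distinct extreme points of $\Gc$ are mutually singular.

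The main obstacle is the simplex statement in the third paragraph: one must set up the current/measure dictionary with care in the presence of the singularities $E$ and of the parabolic leaves (on which $\omega_P$ vanishes although $\omega$ does not), and one must either cite or reprove that the space of harmonic measures of a Riemann surface lamination is a Choquet simplex --- a statement whose only non-formal ingredient is the ergodic decomposition for the leafwise diffusion, everything else being soft functional analysis.
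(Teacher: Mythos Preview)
Your route is genuinely different from the paper's, and the difference is exactly at the point you flag as ``the main obstacle.'' The paper does \emph{not} pass through the correspondence $T\mapsto T\wedge\omega$ with harmonic measures; instead it proves the lattice property (hence the simplex property, by Choquet--Meyer) directly on the cone of currents. Given $T_1,T_2$ in the cone, set $T:=T_1+T_2$, write $T_i=\theta_iT$ with $0\le\theta_i\le 1$ in $L^1(T\wedge\omega)$, and define $\min\{T_1,T_2\}:=\min\{\theta_1,\theta_2\}T$, $\max\{T_1,T_2\}:=\max\{\theta_1,\theta_2\}T$. In a flow box $\min\{T_1,T_2\}$ is carried by the plaque densities $\min\{h_{1,a},h_{2,a}\}$, which are positive \emph{super}harmonic; hence $\ddc\min\{T_1,T_2\}\le 0$ on $M\setminus E$, and Lemma~\ref{lemma_ds} (extension of $\ddc$-closedness across a finite set for positive compactly supported currents) forces $\ddc\min\{T_1,T_2\}=0$ on all of $M$. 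Then $\max\{T_1,T_2\}=T-\min\{T_1,T_2\}$ is harmonic too. The mutual singularity of extremal elements is then argued directly from this lattice structure plus extremality, again invoking Lemma~\ref{lemma_ds} to produce the auxiliary harmonic currents $T_c^\pm$.

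Your reduction to harmonic measures is conceptually clean, but the step ``the space of harmonic probability measures of a (possibly singular) Riemann surface lamination is a Choquet simplex'' is precisely what needs proof, and the references you invoke do not quite give it in this setting: \cite[Th.~3.14]{FornaessSibony1} (which the paper also cites) is used here only for the mutual-singularity assertion, and the paper's own Corollary~\ref{cor_choquet} on the measure side is stated for compact laminations \emph{without} singularities with a continuously smooth leafwise metric, its proof resting on the heat semigroup (Lemma~\ref{lemma_harm_pos}). So your argument defers the difficulty rather than resolving it. The virtue of the paper's direct approach is that the singularities are handled in one stroke by Lemma~\ref{lemma_ds}, with no appeal to diffusion or to the regularity of the metric.
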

\proof
Recall that {\it the mass} of $T$ is the mass of the measure $T\wedge\omega$.
The fact that $\Gc$ is compact convex is clear. The existence of positive harmonic currents was obtained in \cite{BerndtssonSibony}. 
By Choquet's representation theorem \cite{Choquet}, we can decompose $T$ into extremal elements as in the proposition. We show that the decomposition is unique. According to Choquet-Meyer's theorem \cite[p.163]{Choquet}, it is enough to check that the cone generated by $\Gc$ is a lattice. More precisely, there is a natural order in this cone: $T_1\prec T_2$ if $T_2-T_1$ is in the cone. We have to show that given two elements $T_1,T_2$ in the cone there is a minimal element $\max\{T_1, T_2\}$ larger than $T_1,T_2$ and a maximal element $\min\{T_1,T_2\}$ smaller than $T_1,T_2$ with respect to the above order.

Define $T:=T_1+T_2$. Using the description of currents in flow boxes, we see that   $T_i=\theta_iT$ for some functions $\theta_i$ in $L^1(T\wedge\omega)$, $0\leq \theta_i\leq 1$. 
Define also $\max\{T_1,T_2\}:=\max\{\theta_1,\theta_2\}T$ and $\min\{T_1, T_2\}:=\min\{\theta_1,\theta_2\}T$.
It is enough to show that these currents belong to the cone generated by $\Gc$. 
In a flow box $\U\simeq \B\times\T$ as above, if $\mu$ is the transversal measure associated to $T$, then $T_i$ is given by harmonic functions $h_{i,a}$ on the plaques $\B\times\{a\}$ for $\mu$-almost every $a$. The current $\min\{T_1, T_2\}$ is associated with $\min\{h_{1,a},h_{2,a}\}$ which is positive superharmonic. Therefore, $\min\{T_1, T_2\}$ is a positive current on $M$ and $\ddc \min\{T_1, T_2\}$ is a negative measure on $M\setminus E$. By Lemma \ref{lemma_ds}, $\min\{T_1,T_2\}$ is harmonic and is an element of $\Gc$.  
It follows that $\max\{T_1,T_2\}=T-\min\{T_1,T_2\}$ is also an element of $\Gc$. This completes the proof of the first assertion in the proposition.

Let $T,T'$ be two different extremal elements in $\Gc$. We show that they are mutually singular.
Using the local description of currents, we can write $T'=\theta T+T''$ where $\theta$ is a positive function in $L^1(T\wedge\omega)$ and $T\wedge\omega$, $T''\wedge\omega$ are mutually singular.  Using also the local description of currents, we see that $\theta T$ and $T''$ are necessarily harmonic. 
There is a union $\Sigma$ of leaves such that $T''$ has no mass outside $\Sigma$ and $T$ has no mass on $\Sigma$. If $T''$ is non-zero, since $T'$ is extremal, it has no mass outside $\Sigma$ and then $T,T'$ are mutually singular. Assume that $T''=0$.

We can find a number $c>0$  such that $\{\theta\geq c\}$ and $\{\theta\leq c\}$ have positive measure with respect to $T\wedge\omega$. Define $T^+_c:=\max\{\theta-c,0\}T$, $T^-_c:=\max\{c-\theta,0\}T$.
Since $T^+_c=\max\{T',cT\}-cT$ and $T^-_c=\max\{cT,T'\}-T'$, these currents are harmonic.
So, we can choose a set $\Sigma'$ which is a union of leaves such that  $T_c^+$ has no mass outside $\Sigma'$ and $\theta> c$  on $\Sigma'$. The choice of $c$ implies that
 $T$  has positive mass outside $\Sigma'$. 
 Since $T$ is extremal, we deduce that $T$ has no mass on $\Sigma'$.
It follows that $T_c^+=0$ and then $\theta\leq c$ almost everywhere. Using $T_c^-$, we prove in the same way that $\theta\geq c$ almost everywhere. Finally, we have $T'=cT$ and since $T,T'$ have the same mass we get $T=T'$. This is a contradiction.
\endproof

Recall that a leaf  $L$  in  $(X,\Lc,E)$ is  {\it wandering} if it is not closed in $X\setminus E$ and if 
there is a point $p\in L$ and a  flow box  $\U$  containing $p$ such
that $L\cap \U$  
is  just one plaque. Note that if $L$ is wandering, the above property is true for every $p\in L$.
The set of closed leaves and the set of wandering leaves are measurable.
We  get  as for real smooth foliations \cite[p.113]{CandelConlon2}  the  following result.

\begin{theorem}
Let $M$ be a complex manifold and  $\omega$ a Hermitian form on $M$.
Let $(X,\Lc,E)$  be   a compact lamination with isolated singularities  in $M$ 
 and $T$ a positive harmonic current on $X$. Then the  set of wandering  leaves of $X$ has zero
measure  with respect  to the measure $T\wedge\omega$.    
\end{theorem}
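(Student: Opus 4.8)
Here is how I would approach the proof; it adapts the argument used for smooth real foliations \cite[p.~113]{CandelConlon2} to the setting of positive harmonic currents.

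\medskip

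First I would make some reductions. Since $X$ is compact and $\omega$ is continuous, the mass $\|T\|=\|T\wedge\omega\|$ is finite. The set $W$ of wandering leaves is measurable and leaf-saturated, so by the local structure of harmonic currents (Propositions \ref{prop_struc_current} and \ref{prop_current_local_c}) the restriction $\ind_W T$ is again a positive harmonic current: in each flow box it is obtained by restricting the transversal measure of $T$ to the set of parameters whose leaf is wandering, the leafwise harmonic densities being unchanged. As $\ind_W T+\ind_{X\setminus W}T=T$, it suffices to prove $\ind_W T=0$, so we may assume that $T\wedge\omega$ is carried by $W$ and that we must show $T=0$. If needed, Proposition \ref{prop_choquet_c} lets us further assume that $T$ is extremal, so that the holonomy dynamics below is ergodic. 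Suppose, for a contradiction, that $T\neq0$, so that $\|T\|_W>0$.

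\medskip

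Next I would localize the problem. One covers $X$ by a countable family of relatively compact flow boxes $\V_j\simeq\B_j\times\T_j$ which is fine enough that each point of each wandering leaf $L$ lies in some $\V_j$ meeting $L$ in exactly the plaque through that point; this is possible because a wandering leaf meets, near each of its points, some flow box in a single plaque, and such boxes may be shrunk and a countable subcover extracted. Then $W=\bigcup_j W_j$, where $W_j:=\{x\in\V_j:\ L_x$ is wandering and $L_x\cap\V_j$ is the plaque through $x\}$, hence $\|T\|_{W_j}>0$ for some $j$; fix such a box $\V:=\V_j\simeq\B\times\T$, with transversal measure $\mu$ and leafwise harmonic densities $h_a$, and a measurable set $\T_0\subset\T$ with $\mu(\T_0)>0$ such that for every $a\in\T_0$ the leaf $L_a$ is wandering, $L_a\cap\V$ is the single plaque $\B\times\{a\}$, and $h_a>0$. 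Since a wandering leaf is not closed in $X\setminus E$, it is non-compact, so following $L_a$ through a fixed finite subatlas produces infinitely many distinct plaques $P^a_0=\B\times\{a\},P^a_1,P^a_2,\dots$ running out to an end of $L_a$.

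\medskip

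The heart of the argument is then to derive a contradiction with $\|T\|<\infty$. The transversal measure $\mu$ is quasi-invariant under the holonomy pseudogroup of $(X,\Lc)$, with Radon--Nikodym cocycle given by the ratios of the leafwise harmonic densities; since for $a\in\T_0$ the plaque over $a$ never returns to $\V$, the set $\T_0$ lies in the dissipative part of the action. One transports the mass of $T$ from the initial slab $\bigcup_{a\in\T_0}P^a_0$ along the leaves to the slabs $\Omega_n:=\bigcup_{a\in\T_0}P^a_n$: the cocycle relating $\mu$ along the leaf is exactly compensated by the change of the leafwise density, while harmonicity of that density, used through Harnack's inequality on the leaf (whose plaques have bounded geometry), keeps $\|T\|_{\Omega_n}$ bounded below by a fixed positive constant. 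Choosing the plaques $P^a_n$ pairwise disjoint in $X$ (for instance passing to slightly smaller concentric plaques and selecting a disjoint subfamily), one obtains $\|T\|\ge\sum_n\|T\|_{\Omega_n}=+\infty$, a contradiction; hence $\|T\|_W=0$. The step I expect to be the main obstacle is exactly this last one: making the ``never returns / dissipative'' statement for the holonomy pseudogroup precise, treating the atomic and non-atomic parts of $\mu$ uniformly, and arranging the slabs $\Omega_n$ to be genuinely disjoint with comparable $T$-mass --- this is where the adaptation of the Candel--Conlon recurrence argument to positive harmonic currents has to be carried out in full.
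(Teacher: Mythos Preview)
Your reduction to a current supported on wandering leaves and your localization in a flow box are fine, but the ``heart of the argument'' has a genuine gap. The step where you claim that Harnack's inequality keeps $\|T\|_{\Omega_n}$ bounded below by a fixed positive constant is where things break. Harnack controls the ratio of the leafwise harmonic density $h$ between two points of a \emph{single} plaque, or along a chain of \emph{boundedly many} plaques; it gives no uniform control along an infinite chain $P^a_0,P^a_1,\ldots$ running out to an end of $L_a$. Concretely, on a non-compact leaf $L$ one can have a positive harmonic function $h$ with $\int_L h\,\omega<\infty$: the density may decay geometrically as you move along the leaf, so $\sum_n \|T\|_{\Omega_n}$ can converge even though the slabs are disjoint. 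Your intuition is calibrated to the case of holonomy-\emph{invariant} transverse measures (closed currents), where $h\equiv 1$ and the Candel--Conlon recurrence argument goes through verbatim; for harmonic currents the quasi-invariance cocycle is precisely the ratio of values of $h$, and nothing forces that ratio to stay bounded away from zero. The phrase ``the cocycle relating $\mu$ along the leaf is exactly compensated by the change of the leafwise density'' is correct, but it only says that $\|T\|_{\Omega_n}$ equals $\int_{\T_0}\big(\int_{P^a_n} h_{L_a}\,\omega\big)\,d\mu(a)$; it does not bound this from below.

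The paper closes this gap by a different route. After restricting to a set $Y_{m,n}$ of leaves meeting a small transversal in a single point and passing to an extremal component, one gets $T=h[L]$ supported on a \emph{single} wandering leaf $L$, with $h>0$ harmonic on $L$ and $\int_L h\,\omega<\infty$. The crucial step is then to show that $h$ is \emph{constant}: for a smooth bounded concave increasing $\chi$, the current $\chi(h)[L]$ is positive with $i\partial\overline\partial(\chi(h)[L])\le 0$ away from $E$, hence $\ddc$-closed on $M$ by Lemma~\ref{lemma_ds}; thus $\chi(h)$ is harmonic and bounded on $L$, so constant by the maximum principle, which forces $h$ to be constant. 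Once $h$ is constant, $\int_L\omega<\infty$ and $L$ is closed in $X\setminus E$, contradicting the wandering hypothesis. This global $\ddc$-closedness argument is exactly what replaces the failed Harnack step in your outline.
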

\proof
Assume that the set $Y$ of wandering leaves has positive $T\wedge\omega$ measure. Choose a flow box 
$\U\simeq \B\times \T$ such that $Y\cap U$ has positive measure. 
For each $m\geq 1$, cover $\T$ by a finite family of open sets $\S_{m,n}$ of diameter $\leq 1/m$. 
Denote by $Y_{m,n}$ the set of leaves which intersect $\S_{m,n}$ at only one point. So, the union of $Y_{m,n}$ has positive measure since it contains $Y\cap\U$. We can choose  
$m,n$ such that $Y_{m,n}$ has positive measure.

By Proposition \ref{prop_current_local_c}, the restriction of $T$ to $Y_{m,n}$ is a harmonic current. So, we can restrict $T$ to $Y_{m,n}$ in order to 
assume that $T$ has no mass outside $Y_{m,n}$. We deduce from the definition of $Y_{m,n}$ that $T$ can be decomposed into extremal harmonic currents supported on one single leaf of $Y_{m,n}$. 
We can assume that $T$ is extremal. So, there is 
a leaf $L$ in $Y_{m,n}$ and a positive harmonic function $h$ such that $T=h[L]$.  
We show that $L$ is closed in $X\setminus E$ and this will give us a contradiction. For this purpose, one only has to prove that $L$ has finite area. 

Observe that $h$ is integrable on $L$ with respect to the metric $\omega$ since it defines a positive 
current on $M$. 
Therefore, it is enough to show that $h$ is constant. Assume that $h$ is not constant. Let $t<s$ be two different values of $h$.  Let $\chi:\R_+\rightarrow\R_+$ be a smooth concave increasing function such that $\chi(x)=x$ if $x\leq t$ and $\chi(x)=s$ if $x\geq s+1$. So, $\chi(h)\leq h$ and then $\chi(h)$ is integrable on $L$. Moreover, we have since $h$ is harmonic 
$$i\ddbar \chi(h)=\chi'(h)i\ddbar h + \chi''(h)i\partial h\wedge\dbar h=\chi''(h)i\partial h\wedge\dbar h\leq 0.$$
We deduce using flow boxes that $T':=\chi(h)[L]$ is a positive current with compact support in $M$ 
such that $i\ddbar T'\leq 0$ on $M\setminus E$. By Lemma \ref{lemma_ds}, 
$i\ddbar T'=0$ and then $\chi(h)$ is harmonic. Finally, since $\chi(h)$ has a maximum on $L$, it is constant. This gives a contradiction.  
\endproof

\section{Heat  equation  on a real lamination} \label{section_heat_real}

Consider a real lamination $(X,\Lc)$ of dimension $n$ as in Section
\ref{section_current}. We assume that $X$ is compact and endowed with
a continuously smooth Riemannian
metric $g$ on the leaves. By continuous smoothness, we mean that in flow boxes the
coefficients of $g$ and their derivatives of any order depend
continuously on the plaques. 
So, we  can consider
the corresponding Laplacian $\Delta$ and the gradient $\nabla$ on
leaves. Several results below for $\Delta$ can be deduced from
Candel-Conlon \cite{CandelConlon2} and Garnett \cite{Garnett}  but
results on $\widetilde\Delta$ are new. 
Observe that in our context $\Delta$ is not symmetric in $L^2(m)$. 

A measure $m$ on $X$ is called {\it $g$-harmonic} (or simply {\it
  harmonic} 
when there is no confusion) 
if
$$ \int  \Delta u dm=0 \quad \mbox{for}\quad u\in\Dc^0(X).$$
We will consider the operator $\Delta$ in $L^2(m)$ and the
above identity holds for $u$ in the domain 
$\Dom(\Delta)$ of $\Delta$ that we will define later. We have the following elementary result.

\begin{proposition} \label{prop_choquet}
Let $\Omega$ be the volume form associated with $g$. 
Then there is a one to one correspondence between harmonic (resp. positive
harmonic) measures $m$ and harmonic (resp. positive harmonic) 
current $T$ such that $m=T\wedge\Omega$. 
\end{proposition}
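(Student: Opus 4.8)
\noindent
\textbf{Proof of Proposition \ref{prop_choquet}.}
The plan is to use that the volume form $\Omega$ is a strictly positive, continuously smooth $n$-form along the leaves, so that ``multiplication by $\Omega$'' identifies $0$-currents of order $0$ with Radon measures on $X$, and then to match the harmonic and the positivity conditions through this identification. For a $0$-current $T$ of order $0$, put $\langle T\wedge\Omega,u\rangle:=\langle T,u\,\Omega\rangle$ for $u\in\Dc^0(X)$; since $T$ has order $0$ this defines a Radon measure $m:=T\wedge\Omega$. Conversely, since $\Omega$ does not vanish along the leaves, every test $n$-form $\alpha$ on $X$ can be written $\alpha=(\alpha/\Omega)\,\Omega$ with $\alpha/\Omega\in\Dc^0(X)$, so $\langle T,\alpha\rangle:=\langle m,\alpha/\Omega\rangle$ defines a $0$-current of order $0$ with $T\wedge\Omega=m$; these two operations are inverse to each other. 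The positive $n$-forms on $X$ are exactly the forms $u\,\Omega$ with $u\geq0$; using this and the local description of Proposition \ref{prop_struc_current} (testing in a flow box $\U\simeq\B\times\T$ against $u(z,a)=\psi(z)\chi(a)$ with $\psi\geq0$, $\chi\geq0$), one checks that $T$ is positive if and only if $m$ is a positive measure. So it only remains to prove that $T$ is $g$-harmonic if and only if $m=T\wedge\Omega$ is $g$-harmonic.

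Since $\langle m,\Delta u\rangle=\langle T,(\Delta u)\,\Omega\rangle$ for every $u\in\Dc^0(X)$, it is enough to show that $T$ is $g$-harmonic if and only if $\langle T,(\Delta u)\,\Omega\rangle=0$ for all such $u$. Assume first that $T$ is $g$-harmonic. Using a partition of unity subordinate to the flow-box atlas and the linearity of $\Delta$, which gives $\Delta u=\sum_i\Delta(\rho_i u)$ (a locally finite sum), we may assume $u$ is supported in a single flow box $\U\simeq\B\times\T$. By Proposition \ref{prop_struc_current} and the definition of a $g$-harmonic current, $T$ is there described by a measure $\mu$ on $\T$ and $g_a$-harmonic functions $f_a$ on $\B$, so
$$\langle T,(\Delta u)\,\Omega\rangle=\int_\T\Big(\int_\B f_a\,(\Delta_a u(\cdot,a))\,\Omega_a\Big)\,d\mu(a),$$
where $\Delta_a$ and $\Omega_a$ denote the Laplacian and the volume form of $(\B,g_a)$. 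For $\mu$-almost every $a$, Green's second identity on $\B$ (the function $u(\cdot,a)$ has compact support in $\B$ and $f_a$ is smooth) turns the inner integral into $\int_\B(\Delta_a f_a)\,u(\cdot,a)\,\Omega_a=0$. Hence $\langle m,\Delta u\rangle=0$ and $m$ is $g$-harmonic.

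Conversely, assume $\langle T,(\Delta u)\,\Omega\rangle=0$ for all $u\in\Dc^0(X)$. Fix a flow box $\U\simeq\B\times\T$ and test against $u(z,a)=\psi(z)\chi(a)$ with $\psi\in\Dc^0(\B)$ and $\chi$ continuous with compact support in $\T$. Since $\Delta$ acts along the leaves, $\Delta u(z,a)=\chi(a)\,(\Delta_a\psi)(z)$, and Proposition \ref{prop_struc_current} yields $\int_\T\chi(a)\,\langle T_a,(\Delta_a\psi)\,\Omega_a\rangle\,d\mu(a)=0$. As $\chi$ is arbitrary, $\langle T_a,(\Delta_a\psi)\,\Omega_a\rangle=0$ for $\mu$-almost every $a$; letting $\psi$ run over a countable dense subset of $\Dc^0(\B)$ we obtain, for $\mu$-almost every $a$, that $T_a$ annihilates $(\Delta_a\psi)\,\Omega_a$ for every test function $\psi$. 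Because $\Delta_a$ is elliptic and is symmetric with respect to $\Omega_a$, Weyl's lemma (elliptic regularity) shows that such a $T_a$ is given by a smooth $g_a$-harmonic function. Performing this in every flow box of the locally finite atlas, and using the near-uniqueness of the local decomposition (Proposition \ref{prop_struc_current_unique}) to glue the descriptions, we conclude that $T$ is $g$-harmonic. Combined with the positivity statement above, this gives the announced one-to-one correspondence.
\hfill$\square$

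\medskip
The main obstacle is the bookkeeping in the converse direction: turning the global vanishing $\langle T,(\Delta u)\,\Omega\rangle=0$ into a statement valid for $\mu$-almost every plaque (which is why one restricts to product test functions $\psi\otimes\chi$ and to a countable dense family of $\psi$'s) and then invoking elliptic regularity leafwise. One should keep in mind that $\Delta$ need not be symmetric in $L^2(m)$; what is used is only that $\Delta$ is leafwise symmetric with respect to the Riemannian volume form $\Omega$, which is precisely what Green's identity and Weyl's lemma require.
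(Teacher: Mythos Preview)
Your proof is correct and follows essentially the same route as the paper: both arguments reduce to a flow box via Proposition~\ref{prop_struc_current}, invoke Weyl's lemma leafwise to produce the harmonic density $h_a$, and observe that positivity of $m$ forces positivity of $h_a$. The paper is simply more terse---it declares the direction ``$T$ harmonic $\Rightarrow$ $m$ harmonic'' to be clear and, for the converse, disintegrates $m$ directly (rather than first setting up the global bijection $T\leftrightarrow m$ via division by $\Omega$) to get measures $m_a$ on the plaques, then applies Weyl's lemma to write $m_a=h_a\Omega$; your version makes the same steps explicit, including the Green's identity justification and the countable-density argument needed to pass from ``for each $\psi$, $\mu$-a.e.\ $a$'' to ``for $\mu$-a.e.\ $a$, every $\psi$''.
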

\proof
It is clear that $T\wedge\Omega$ is a harmonic (positive) measure if $T$ is a harmonic (positive) 0-current. 
Consider now a decomposition of $m$ in a flow box as in Proposition
\ref{prop_struc_current}
$$m=\int m_a d\mu(a)$$
with $m_a$ a measure on $\B\times\{a\}$. Since $m$ is harmonic, we
deduce that $m_a$ is harmonic in the flow box for $\mu$-almost every
$a$. So, by Weyl's lemma, there is a harmonic function $h_a$
such that $m_a=h_a\Omega$. We have $m=T\wedge\Omega$ where $T$ is
locally given by
$$T=\int h_a [\B\times\{a\}] d\mu(a),$$
where $[\B\times\{a\}]$ is the current of integration on the plaque
$\B\times\{a\}$. 
When $m$ is positive, it is easy to see that $h_a$ is positive for $\mu$-almost every $a$. The result follows.
\endproof

Consider a harmonic probability measure $m$ on $X$. Write
$m=T\wedge\Omega$ as in 
Proposition \ref{prop_choquet}. In a flow box
$\U\simeq \B\times\T$, by Proposition \ref{prop_struc_current}, the current $T$ can be
written as 
$$T=\int h_a [\B\times\{a\}]d\mu(a),$$
where $h_a$ is a  positive harmonic function on $\B$  and $\mu$ is
a  positive  measure  on the transversal $\T$. We can restrict $\mu$
in order to assume that $h_a\not=0$ for $\mu$-almost every $a$. Under
this condition, Proposition \ref{prop_struc_current_unique} says that
up to a multiplicative function, $\mu$ and $h_a$ are
uniquely determined by $T$.

In what follows, the differential operators $\nabla$, $\Delta$ and $\widetilde\Delta$ are considered in $L^2(m)$. We introduce the  Hilbert  space $H^1(m)$ as
the completion of $\Dc^0(X)$ with  respect to  the  norm
$$ \|u\|_{H^1}^2:=\int  |u|^2 dm+\int | \nabla u|^2 dm.$$
Recall that the gradient $\nabla$ is defined by 
$$\langle \nabla u,\xi\rangle_g=du(\xi)$$
for all tangent vector $\xi$ along a leaf and for $u\in\Dc^0(X)$. 
We consider $\nabla$ as a operator in $L^2(m)$ and $H^1(m)$ its domain.

Define in a flow box $\U\simeq \B\times \T$ as above the Laplace type operator
$$-\widetilde \Delta u= -\Delta u-\langle h_a^{-1}\nabla h_a,\nabla u\rangle_g=-\Delta u-F u,$$ 
where $F$ is a vector field.
The uniqueness of $h_a$ and $\mu$ implies that $F$ does
not depend on the choice of the flow box. Therefore, $Fu$ and
$\widetilde\Delta u$ are defined globally $m$-almost everywhere when $u\in\Dc^0(X)$.

We recall some classical results of functional analysis that we will
use later. The reader will find the details in Brezis \cite{Brezis}. A
linear operator $A$ on a Hilbert space $L$ is called {\it monotone} if
$\langle Au,u\rangle\geq 0$ for all $u$ in the domain $\Dom(A)$ of
$A$. Such an operator is {\it maximal monotone} if moreover for any
$f\in L$ there is a $u\in\Dom(A)$ such that $u+Au=f$. In this case,
the domain of $A$ is always dense in $L$ and the graph of $A$ is closed. 

A family $S(t):L\rightarrow L$, $t\in\R_+$, is {\it a semi-group of
  contractions} if $S(t+t')=S(t)\circ S(t')$ and if $\|S(t)\|\leq 1$
for all $t,t'\geq 0$. We will apply the following theorem 
to our Laplacian operators and for $L:=L^2(m)$. It says that any maximal 
monotone operator is the infinitesimal generator of a semi-group of contractions.

\begin{theorem}[Hille-Yosida] \label{th_hille_yosida}
Let $A$ be a maximal monotone operator on a Hilbert space $L$. Then
there is a semi-group of contractions $S(t):L\rightarrow L$,
$t\in\R_+$, such that for $u_0\in\Dom(A)$, $u(t,\cdot):=S(t)u_0$ is
the unique $\Cc^1$ function from $\R_+$ to $L$ with values in
$\Dom(A)$ which satisfies
$${\partial u(t,\cdot)\over \partial t}+Au(t,\cdot)=0\quad
\mbox{and}\quad u(0,\cdot)=u_0.$$ 
When $A$ is self-adjoint and $u_0\in L$, the function $u(t,\cdot)$ is
still continuous on $\R_+$ and is $\Cc^1$ on $\R_+^*$ with values in
$\Dom(A)$ and we have the estimate
$$\Big\|{\partial u\over\partial t}\Big\|\leq {1\over t}\|u_0\|\quad \mbox{for } t>0.$$ 
\end{theorem}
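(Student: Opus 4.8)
The plan is to use the Yosida approximation scheme, exactly as in Brezis's treatment. First I would establish the basic resolvent facts: for every $\lambda>0$, the operator $I+\lambda A$ is a bijection from $\Dom(A)$ onto $L$, so that $J_\lambda:=(I+\lambda A)^{-1}$ is a well-defined bounded operator on $L$ with $\|J_\lambda\|\leq 1$. Injectivity and the contraction bound are immediate from monotonicity, via $\langle(I+\lambda A)u-(I+\lambda A)v,\,u-v\rangle\geq\|u-v\|^2$; surjectivity of $I+\lambda A$ for all $\lambda>0$ follows from the hypothesis $R(I+A)=L$ by a standard continuation/fixed-point argument in the parameter $\lambda$ (writing $u+\lambda Au=f$ as a contraction fixed-point problem for $J_{\lambda_0}$ with $\lambda$ near $\lambda_0$). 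One then checks $\|J_\lambda u-u\|\to 0$ as $\lambda\to 0^+$ for $u\in\Dom(A)$, hence for all $u\in L$ since $\Dom(A)$ is dense. I introduce the Yosida regularization $A_\lambda:=\lambda^{-1}(I-J_\lambda)=AJ_\lambda$, a bounded monotone operator with $\|A_\lambda\|\leq 2/\lambda$, $\|A_\lambda u\|\leq\|Au\|$ and $A_\lambda u\to Au$ for $u\in\Dom(A)$.

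Since $A_\lambda$ is bounded, $S_\lambda(t):=e^{-tA_\lambda}=\sum_{n\geq0}(-tA_\lambda)^n/n!$ defines a uniformly continuous one-parameter semigroup, and writing $e^{-tA_\lambda}=e^{-t/\lambda}e^{(t/\lambda)J_\lambda}$ with $\|J_\lambda\|\leq 1$ gives the contraction bound $\|S_\lambda(t)\|\leq 1$. The key step is to show that for $u_0\in\Dom(A)$ the family $S_\lambda(t)u_0$ converges as $\lambda\to 0^+$, uniformly for $t$ in bounded intervals. For this I would differentiate $\|S_\lambda(t)u_0-S_\mu(t)u_0\|^2$ in $t$, insert the decomposition $v=J_\lambda v+\lambda A_\lambda v$, use the monotonicity of $A$ applied to the pair $J_\lambda S_\lambda(t)u_0,\,J_\mu S_\mu(t)u_0$ to discard the leading term, and bound the remaining terms by a constant times $(\lambda+\mu)\|Au_0\|^2$; integrating yields $\|S_\lambda(t)u_0-S_\mu(t)u_0\|^2\leq C\,t\,(\lambda+\mu)\|Au_0\|^2$. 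Hence $S_\lambda(t)u_0$ is Cauchy; I define $S(t)u_0$ as its limit, extend $S(t)$ to all of $L$ by density and the uniform bound, and pass the semigroup and contraction properties to the limit.

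To identify $u(t):=S(t)u_0$ with the solution of the Cauchy problem when $u_0\in\Dom(A)$: the functions $S_\lambda(t)u_0$ are $\Cc^1$ with $\frac{d}{dt}S_\lambda(t)u_0=-A_\lambda S_\lambda(t)u_0=-S_\lambda(t)A_\lambda u_0$, which converges uniformly to $-S(t)Au_0$ because $A_\lambda u_0\to Au_0$; therefore $u$ is $\Cc^1$ with $u'(t)=-S(t)Au_0$. Since $A$ has closed graph and $J_\lambda S_\lambda(t)u_0\to S(t)u_0$ while $AJ_\lambda S_\lambda(t)u_0=A_\lambda S_\lambda(t)u_0\to S(t)Au_0$, it follows that $u(t)\in\Dom(A)$ with $Au(t)=S(t)Au_0$, i.e. $u'+Au=0$ and $u(0)=u_0$. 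Uniqueness is the monotonicity energy estimate: for two such solutions $u,v$, set $w:=u-v$; then $\tfrac12\frac{d}{dt}\|w\|^2=\langle w',w\rangle=-\langle Aw,w\rangle\leq 0$ and $w(0)=0$, so $w\equiv 0$.

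For the self-adjoint case I would invoke the spectral theorem: since $A$ is self-adjoint and monotone, $A=\int_0^\infty s\,dE(s)$ for a spectral measure $E$, and $S(t)=\int_0^\infty e^{-ts}\,dE(s)$. For any $u_0\in L$ and $t>0$ one has $\int_0^\infty s^2e^{-2ts}\,d\langle E(s)u_0,u_0\rangle<\infty$, so $S(t)u_0\in\Dom(A)$; differentiating under the spectral integral shows $t\mapsto S(t)u_0$ is continuous on $\R_+$ (dominated convergence handles $t\to 0$ via $\int(e^{-ts}-1)^2\,d\langle E(s)u_0,u_0\rangle\to0$) and $\Cc^1$ on $\R_+^*$ with values in $\Dom(A)$, solving the equation. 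The stated bound is $\|\partial_t u\|=\|AS(t)u_0\|\leq\big(\sup_{s\geq0}se^{-ts}\big)\|u_0\|=(et)^{-1}\|u_0\|\leq t^{-1}\|u_0\|$. I expect the main obstacle to be the uniform Cauchy estimate for the Yosida approximations --- extracting the right cancellation from monotonicity while controlling the $O(\lambda+\mu)$ error --- together with the careful transfer of $\Cc^1$-regularity and of membership in $\Dom(A)$ from the approximants $S_\lambda$ to the limit semigroup $S$.
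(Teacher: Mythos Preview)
Your proposal is a correct and carefully organized sketch of the standard proof via Yosida approximation together with the spectral calculus in the self-adjoint case. However, there is nothing to compare it against: the paper does not prove Theorem~\ref{th_hille_yosida}. It is stated there as a classical result of functional analysis, with the explicit remark that ``the reader will find the details in Brezis~\cite{Brezis}.'' Your outline is precisely the Brezis argument the authors are citing, so in that sense you have reproduced exactly what the paper intends the reader to consult externally.
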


In order to check that our operators are maximal monotone, we will
apply the following result to $H:=H^1(m)$.

\begin{theorem}[Lax-Milgram] \label{th_lax_milgram}
Let $e(u,v)$ be a continuous bilinear form on a Hilbert space
$H$. Assume that $e(u,u)\geq \|u\|^2_H$ for $u\in H$. Then for every $f$
in the dual $H^*$ of $H$ there is a unique $u\in H$ such that 
$e(u,v)=\langle f,v\rangle$ for $v\in H$. 
\end{theorem}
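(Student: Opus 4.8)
The plan is to reduce the statement to the Riesz representation theorem by encoding the bilinear form $e$ as a bounded linear operator on $H$ and then showing that this operator is invertible. Throughout, write $\langle\cdot,\cdot\rangle$ for the inner product of $H$ and let $C$ be the continuity constant of $e$, so $|e(u,v)|\leq C\|u\|_H\|v\|_H$.

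First I would fix $u\in H$ and observe that $v\mapsto e(u,v)$ is a bounded linear functional on $H$; Riesz representation then produces a unique element $Au\in H$ with $e(u,v)=\langle Au,v\rangle$ for all $v\in H$. Linearity of $e$ in the first slot makes $u\mapsto Au$ linear, and the continuity bound gives $\|Au\|_H\leq C\|u\|_H$, so $A$ is a bounded operator on $H$. Next I would exploit the coercivity hypothesis: taking $v=u$ gives $\|u\|_H^2\leq e(u,u)=\langle Au,u\rangle\leq \|Au\|_H\|u\|_H$, hence $\|Au\|_H\geq \|u\|_H$. This single inequality shows at once that $A$ is injective and has closed range (if $Au_n$ is Cauchy, then so is $u_n$, hence convergent, and the limit lies in the range).

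It then remains to see that the range of $A$ is all of $H$. Suppose $w$ is orthogonal to $\mathrm{Range}(A)$; then $0=\langle Aw,w\rangle\geq\|w\|_H^2$, so $w=0$. Since $\mathrm{Range}(A)$ is closed, this forces $\mathrm{Range}(A)=H$, so $A$ is a bounded linear bijection of $H$. Finally, given $f\in H^*$, Riesz representation gives $g\in H$ with $\langle f,v\rangle=\langle g,v\rangle$ for all $v$; setting $u:=A^{-1}g$ yields $e(u,v)=\langle Au,v\rangle=\langle g,v\rangle=\langle f,v\rangle$ for all $v\in H$, and injectivity of $A$ gives uniqueness.

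There is no serious obstacle here: the one point needing a moment's care is the surjectivity of $A$, that is, verifying that its closed range has trivial orthogonal complement, which is precisely where coercivity is used a second time. (Alternatively one could replace the operator-theoretic step by a Banach fixed point argument applied to the map $v\mapsto v-\rho(Av-g)$ for a suitably small $\rho>0$, but the Riesz/closed-range route is shorter.)
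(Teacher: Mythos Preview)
Your proof is correct and is the standard textbook argument for Lax--Milgram. Note, however, that the paper does not prove this theorem at all: it is stated as a classical result of functional analysis and the reader is referred to Brezis \cite{Brezis} for details. So there is no ``paper's own proof'' to compare against; your argument is essentially the one found in Brezis and other standard references.
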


Define for $u,v\in\Dc^0(X)$
$$q(u,v):=- \int (\Delta u)v dm, \qquad
e(u,v) := q(u,v)  +\int uvdm$$
and
$$\widetilde q(u,v):=-\int (\widetilde{\Delta}u)v dm=q(u,v)-\int
(Fu)vdm, \qquad
\widetilde e(u,v):=\widetilde q(u,v)+\int uvdm.$$
Note that these identities still hold for $v\in L^2(m)$ and $u$ in the
domain of $\Delta$ and of $\widetilde\Delta$ that we will
define later.

\begin{lemma} \label{lemma_e_tilde}
We have 
for $u,v\in\Dc^0(X)$
$$\widetilde q(u,v)=\int \langle\nabla u,\nabla v\rangle_g dm  
\quad \mbox{and}\quad \int (\widetilde\Delta u)v dm = \int
u(\widetilde\Delta v) dm.$$
In particular, $\widetilde q(u,v)$ and $\widetilde e(u,v)$ are symmetric in $u,v$ and 
$$\int \widetilde\Delta u dm=\int\Delta u dm=\int Fu dm=0 \quad \mbox{for}\quad u\in\Dc^0(X).$$
\end{lemma}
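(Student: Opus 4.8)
The plan is to deduce all four assertions from the single integration‑by‑parts identity $\widetilde q(u,v)=\int\langle\nabla u,\nabla v\rangle_g\,dm$, harvesting symmetry and the vanishing statements as formal consequences. Since the computation is local, I would first fix a partition of unity $\{\chi_i\}$ with $\chi_i\in\Dc^0(X)$ subordinate to the finite atlas of flow boxes $\U_i\simeq\B_i\times\T_i$. Inside each $\U_i$ I would use the description $m=\int_{\T_i}h_{i,a}\,\Omega\,d\mu_i(a)$ from Proposition \ref{prop_choquet}, where $h_{i,a}$ is a positive harmonic function on the plaque $\B_i\times\{a\}$ and $\Omega$ is its Riemannian volume form; after restricting $\mu_i$ I may assume $h_{i,a}\not\equiv0$, hence $h_{i,a}>0$ by the maximum principle, so that $F=h_{i,a}^{-1}\nabla h_{i,a}$ is smooth on plaques and $\langle\nabla h_{i,a},\nabla u\rangle_g=h_{i,a}\,(Fu)$ there.

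For the main identity, fix $u,v\in\Dc^0(X)$. On a plaque $\B_i\times\{a\}$ the vector field $\chi_i\,v\,h_{i,a}\,\nabla u$ has compact support because $\chi_i$ does, so the divergence theorem on the plaque (no boundary contribution) makes the integral of its divergence against $\Omega$ vanish. Expanding the divergence by the Leibniz rule produces four terms, $\chi_i v h_{i,a}\Delta u$, $\chi_i v\langle\nabla h_{i,a},\nabla u\rangle_g$, $\chi_i h_{i,a}\langle\nabla u,\nabla v\rangle_g$ and $v h_{i,a}\langle\nabla\chi_i,\nabla u\rangle_g$. Integrating over the plaque against $\Omega$, then over $\T_i$ against $d\mu_i$, then summing over $i$, using $\langle\nabla h_{i,a},\nabla u\rangle_g=h_{i,a}(Fu)$ and that $h_{i,a}\,\Omega\,d\mu_i$ recomposes $m$ on $\U_i$, I would obtain
\[0=\int(\Delta u)v\,dm+\int(Fu)v\,dm+\int\langle\nabla u,\nabla v\rangle_g\,dm+\int v\,\langle\nabla(\textstyle\sum_i\chi_i),\nabla u\rangle_g\,dm .\]
The last integral vanishes since $\sum_i\chi_i\equiv1$, and since $\widetilde\Delta u=\Delta u+Fu$ this rearranges to $\widetilde q(u,v)=-\int(\widetilde\Delta u)v\,dm=\int\langle\nabla u,\nabla v\rangle_g\,dm$, which is the first claim.

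The rest is immediate. The right‑hand side is symmetric in $u,v$, so $\widetilde q$ and hence $\widetilde e(u,v)=\widetilde q(u,v)+\int uv\,dm$ are symmetric, and $\int(\widetilde\Delta u)v\,dm=-\widetilde q(u,v)=-\widetilde q(v,u)=\int u(\widetilde\Delta v)\,dm$. Taking $v\equiv1$ — legitimate because $X$ is compact, so the constant $1$ lies in $\Dc^0(X)$ — gives $\widetilde q(u,1)=\int\langle\nabla u,\nabla1\rangle_g\,dm=0$, hence $\int\widetilde\Delta u\,dm=0$; since $m$ is harmonic, $\int\Delta u\,dm=0$ as well, and subtracting yields $\int Fu\,dm=\int\widetilde\Delta u\,dm-\int\Delta u\,dm=0$.

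There is no deep difficulty here; the points needing care are purely bookkeeping. First, one must insert the cutoff $\chi_i$ before integrating by parts on a plaque, because a plaque need not be compact nor properly embedded in $X$, and one must check that the extra $\nabla\chi_i$‑terms so produced cancel after summation precisely because $\sum_i\chi_i\equiv1$. Second, one should note that $Fu\in L^1(m)$, so all integrals make sense, which follows from $\int_K|F|\,dm=\int\|\nabla h_{i,a}\|_{L^1(K)}\,d\mu_i<\infty$ using the local finiteness in Proposition \ref{prop_struc_current} together with interior elliptic estimates. Both are routine.
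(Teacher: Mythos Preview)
Your proof is correct and follows essentially the same route as the paper: both reduce via a partition of unity to integration by parts on plaques, applying the divergence theorem to $v\,h_a\,\nabla u$ (the paper uses the identity $\langle\nabla u,\nabla v\rangle_g+v\Delta u=\div(v\nabla u)$ after first localizing $u,v$ to a single flow box, whereas you keep $u,v$ global and insert $\chi_i$ explicitly, checking that the $\nabla\chi_i$ terms cancel upon summation). The deductions of symmetry and the vanishing statements are identical in spirit.
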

\proof
Using a partition of unity, we can assume that $u$ and $v$ have
compact support in a flow box as above. It is then enough to consider
the case where $T$ is supported by a plaque $\B\times\{a\}$ and given
by a harmonic function $h_a$. The first identity in the lemma can be
deduced from classical identities in Riemannian geometry, see e.g. \cite{Chavel}. Indeed, we have
$$\langle \nabla u,\nabla v\rangle_g+v\Delta u=\div (v\nabla u)$$
and the first identity is equivalent to
$$\int_\B\div(v\nabla u)h_a\Omega= -\int_\B (Fu)v h_a\Omega$$
which is easily obtained by integration by parts.

It follows from the first identity in the lemma that $\widetilde q$
and $\widetilde e$ are
symmetric. The second assertion in the lemma is a consequence.
We then deduce the last identities in the lemma 
using that $\widetilde\Delta 1=0$ and that $m$ is harmonic. Note that the lemma still holds for
$u,v$ in the domain of $\Delta$ and $\widetilde\Delta$ that we will
define later.
\endproof

\begin{lemma} \label{lemma_e_e_tilde}
The bilinear forms $q$, $\widetilde q$, $e$ and $\widetilde e$ extend
  continuously to $H^1(m)\times H^1(m)$. Moreover, we have
$q(u,u)=\widetilde q(u,u)$ and $e(u,u)=\widetilde e (u,u)$ for $u\in H^1(m)$. 
\end{lemma}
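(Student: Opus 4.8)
The plan is to handle $\widetilde q$ and $\widetilde e$ first, where continuity is immediate from Cauchy--Schwarz, and then to deduce the statements for $q$ and $e$ by comparing them with $\widetilde q$, $\widetilde e$ and controlling the drift term.

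First I would treat $\widetilde q$ and $\widetilde e$. By Lemma \ref{lemma_e_tilde}, for $u,v\in\Dc^0(X)$ one has $\widetilde q(u,v)=\int\langle\nabla u,\nabla v\rangle_g\,dm$, so Cauchy--Schwarz gives $|\widetilde q(u,v)|\le\|\nabla u\|_{L^2(m)}\,\|\nabla v\|_{L^2(m)}\le\|u\|_{H^1}\|v\|_{H^1}$, and likewise $\big|\int uv\,dm\big|\le\|u\|_{L^2(m)}\|v\|_{L^2(m)}\le\|u\|_{H^1}\|v\|_{H^1}$. Since $\Dc^0(X)$ is dense in $H^1(m)$ by definition of the latter, the bilinear forms $\widetilde q$ and $\widetilde e=\widetilde q+\int uv\,dm$ extend uniquely to continuous bilinear forms on $H^1(m)\times H^1(m)$.

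The one nontrivial ingredient is a uniform bound on the drift. In a flow box the vector field $F$ coincides with $h_a^{-1}\nabla h_a$, where $h_a>0$ is $\Delta_g$-harmonic on a neighbourhood of $\overline\B$, and $\Delta_g$ is uniformly elliptic with smooth coefficients there. The interior gradient estimate for positive solutions (Harnack together with interior Schauder estimates, or the Cheng--Yau gradient estimate) then gives $|h_a^{-1}\nabla h_a|\le C$ on $\B$, with $C$ controlled by $g$ on the flow box; since $X$ is compact it is covered by finitely many flow boxes, so $C$ may be taken uniform and $|F|\le C$ holds $m$-almost everywhere. Granting this, for $u,v\in\Dc^0(X)$ the definitions give $q(u,v)=\widetilde q(u,v)+\int(Fu)v\,dm$, with $|Fu|=|\langle h_a^{-1}\nabla h_a,\nabla u\rangle_g|\le C|\nabla u|_g$, hence $\big|\int(Fu)v\,dm\big|\le C\|\nabla u\|_{L^2(m)}\|v\|_{L^2(m)}\le C\|u\|_{H^1}\|v\|_{H^1}$. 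Combined with the previous paragraph this yields $|q(u,v)|\le(1+C)\|u\|_{H^1}\|v\|_{H^1}$, so $q$, and hence $e=q+\int uv\,dm$, extend continuously to $H^1(m)\times H^1(m)$.

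For the quadratic identities, note that for $u\in\Dc^0(X)$ one has $q(u,u)-\widetilde q(u,u)=\int(Fu)u\,dm=\tfrac12\int F(u^2)\,dm=0$ by Lemma \ref{lemma_e_tilde} applied to $u^2\in\Dc^0(X)$ (using that $F$ is first order, so $F(u^2)=2u\,Fu$); adding $\int u^2\,dm$ gives $e(u,u)=\widetilde e(u,u)$ on $\Dc^0(X)$. Both sides of each identity are continuous quadratic forms on $H^1(m)$ by the above, and $\Dc^0(X)$ is dense, so the identities extend to all of $H^1(m)$. The main obstacle is the uniform gradient estimate of the third paragraph; the rest is routine density and Cauchy--Schwarz.
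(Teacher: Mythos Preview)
Your proof is correct and follows essentially the same approach as the paper: continuity of $\widetilde q,\widetilde e$ via Lemma~\ref{lemma_e_tilde} and Cauchy--Schwarz, boundedness of the drift $F$ from Harnack's inequality on each flow box plus compactness of $X$, and density. For the quadratic identity the paper expands $\Delta(u^2)=2u\Delta u+2|\nabla u|^2$ and uses $\int\Delta(u^2)\,dm=0$, whereas you use $F(u^2)=2uFu$ and $\int F(u^2)\,dm=0$ from Lemma~\ref{lemma_e_tilde}; the two computations are equivalent.
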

\proof
The first identity in Lemma \ref{lemma_e_tilde} implies that
$\widetilde q$ and $\widetilde e$
extend continuously to $H^1(m)$ and the identity is still valid for
the extension of $\widetilde q$. 
In order to prove the same property
for $q$ and $e$, it is enough to show that $q-\widetilde q$ is bounded on
$H^1(m)\times H^1(m)$. 

We use the description of $\widetilde\Delta$ in a flow box $\U$ as
above. By Harnack's inequality, $h_a^{-1}\nabla h_a$ 
are locally bounded uniformly on $a$. Since $X$ is compact, we deduce that
the  vector field $F$ has  bounded coefficients.
Therefore, by Cauchy-Schwarz's inequality 
$$|q(u,v)-\widetilde q(u,v)|=\Big|\int (Fu) v dm\Big| \leq  C \Big(\int|\nabla u|^2 dm\Big)^{1/2}
 \Big(\int v^2 dm\Big)^{1/2}.$$ 
So, $q-\widetilde q$ is bounded and hence $q,e$ extend continuously on
$H^1(m)\times H^1(m)$. 

We prove now the last identity in the lemma. We can assume that $u$ is
in $\Dc^0(X)$ since this space is dense in $H^1(m)$. We have
$$\Delta(u^2)=\div(\nabla u^2)=\div(2u\nabla u)= 2u\Delta u + 2|\nabla
u|^2.$$
Since $m$ is harmonic, the integral of $\Delta(u^2)$ vanishes. It
follows that
$$-\int u\Delta u dm =\int|\nabla u|^2dm.$$
We deduce from the definition of $q,e$ and the first identity in Lemma
\ref{lemma_e_tilde} that $q(u,u)=\widetilde q(u,u)$ and
$e(u,u)=\widetilde e(u,u)$. 
This completes the proof.
\endproof

Define the domain $\Dom(\Delta)$ of $\Delta$
(resp. $\Dom(\widetilde\Delta)$ of $\widetilde\Delta$) as the space of $u\in
H^1(m)$ such that $q(u,\cdot)$ (resp. $\widetilde q(u,\cdot)$) extends to a linear
continuous form on $L^2(m)$. 
Since $\Delta-\widetilde\Delta$ is given
by a vector field with bounded coefficients, we have
$\Dom(\Delta)=\Dom(\widetilde\Delta)$. In a flow box, we can show using Federer's version of Lusin's theorem \cite[Th. 2.3.5]{Federer} that a function $u$ in $L^2(m)$ belongs to $H^1(m)$ if the gradient $\nabla u$, defined as a vector field with distribution coefficients on generic plaques, is in $L^2(m)$. An analogous property holds for $\Delta$ and $\widetilde\Delta$. 
In fact, $\Dom(\Delta)$ is the completion of $\Dc^0(X)$ for the norm 
$\sqrt{\|u\|^2_{L^2(m)}+\|\Delta u\|_{L^2(m)}^2}$. 
It is clear that if $u\in\Dom(\Delta)$ then $\Delta u$ in the sense of distributions with respect to $\Dc^0(X)$ as test functions, is in $L^2$. This allows us to extend Lemma \ref{lemma_e_tilde} to $u,v$ in $\Dom(\Delta)$. 
The converse is true but we don't use it.
We have the following proposition.

\begin{proposition} \label{prop_delta_max}
Let $(X,\Lc)$ be a compact real lamination of dimension $n$ endowed
with a continuously smooth Riemannian metric $g$ on the leaves. Let $m$ be
a harmonic probability measure on $X$.
Then the associated operators $-\Delta$ and
$-\widetilde\Delta$ are maximal monotone on $L^2(m)$. In particular,
they are infinitesimal generators of semi-groups of contractions on $L^2(m)$ and their graphs are closed.
\end{proposition}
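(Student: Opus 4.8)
The plan is to read off monotonicity directly from the identities already established, and to obtain maximality by using the Lax--Milgram theorem (Theorem \ref{th_lax_milgram}) to solve the resolvent equations $u-\Delta u=f$ and $u-\widetilde\Delta u=f$ in $H^1(m)$.

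First I would check monotonicity. For $u\in\Dom(\Delta)=\Dom(\widetilde\Delta)$, combining the extension of Lemma \ref{lemma_e_tilde} to the domain with Lemma \ref{lemma_e_e_tilde} gives
$$\langle -\Delta u,u\rangle=q(u,u)=\widetilde q(u,u)=\int|\nabla u|^2\,dm\geq 0,$$
and the same chain of equalities, with $\widetilde q$ in place of $q$, shows $\langle -\widetilde\Delta u,u\rangle\geq 0$. Hence both $-\Delta$ and $-\widetilde\Delta$ are monotone.

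Next, for maximality, fix $f\in L^2(m)$. Since $\|v\|_{L^2(m)}\leq\|v\|_{H^1(m)}$, the functional $v\mapsto\int fv\,dm$ lies in $H^1(m)^*$. By Lemma \ref{lemma_e_e_tilde} the form $e$ is continuous on $H^1(m)\times H^1(m)$, and Lemmas \ref{lemma_e_tilde} and \ref{lemma_e_e_tilde} give, for $u\in H^1(m)$,
$$e(u,u)=q(u,u)+\int u^2\,dm=\int|\nabla u|^2\,dm+\int u^2\,dm=\|u\|_{H^1(m)}^2,$$
so $e$ is coercive. Lax--Milgram then produces $u\in H^1(m)$ with $e(u,v)=\int fv\,dm$ for all $v\in H^1(m)$; testing against $v\in\Dc^0(X)$ yields $q(u,v)=\int(f-u)v\,dm$. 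Since $\Dc^0(X)$ is dense in $L^2(m)$ and $f-u\in L^2(m)$, the linear form $q(u,\cdot)$ extends continuously to $L^2(m)$, which by definition means $u\in\Dom(\Delta)$ and $-\Delta u=f-u$, i.e. $u+(-\Delta)u=f$. The identical argument with $\widetilde e,\widetilde q$ in place of $e,q$ handles $-\widetilde\Delta$. Together with monotonicity, this proves that $-\Delta$ and $-\widetilde\Delta$ are maximal monotone, and the remaining assertions follow from the general facts recalled before Theorem \ref{th_hille_yosida} (dense domain, closed graph) and from Hille--Yosida (Theorem \ref{th_hille_yosida}) applied with $L=L^2(m)$.

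The one point requiring a little care is the last step: one must verify that the weak solution delivered by Lax--Milgram genuinely meets the definition of $\Dom(\Delta)$ (this is where density of $\Dc^0(X)$ in $L^2(m)$ enters), and that the sign conventions in the definitions of $q$ and $\Delta$ are tracked correctly. By contrast, the non-symmetry of $\Delta$ emphasized in the text is not an obstacle, since neither Lax--Milgram nor the maximal-monotone/Hille--Yosida framework requires $e$ or the operator to be symmetric; symmetry only becomes relevant for the finer regularity estimate in the second half of Theorem \ref{th_hille_yosida}.
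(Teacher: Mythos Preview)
Your proposal is correct and follows essentially the same route as the paper: monotonicity from Lemmas \ref{lemma_e_tilde} and \ref{lemma_e_e_tilde}, then maximality by applying Lax--Milgram (Theorem \ref{th_lax_milgram}) to the coercive forms $e$ and $\widetilde e$ on $H^1(m)$, with the concluding assertions drawn from Hille--Yosida and the general theory of maximal monotone operators. The only cosmetic difference is that the paper observes directly that $e(u,v)=\langle f,v\rangle_{L^2(m)}$ for all $v\in H^1(m)$ already shows $q(u,\cdot)=\langle f-u,\cdot\rangle$ extends to $L^2(m)$, so the detour through $\Dc^0(X)$ and density is not needed (though it is harmless).
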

\proof
The last assertion is the consequence of the first one, Theorem
\ref{th_hille_yosida} and the properties of maximal monotone operators. 
So, we only have to prove the first assertion.
We deduce from Lemmas \ref{lemma_e_tilde} and \ref{lemma_e_e_tilde}
that for $u\in \Dc^0(X)$
$$\langle - \Delta u ,u \rangle  =\langle - \widetilde \Delta u ,u \rangle  
=\int \langle \nabla u,\nabla u\rangle_g  dm=\|\nabla u\|_{L^2}^2\geq 0.$$
By continuity, we can extend the inequalities to $u$ in $\Dom(-\Delta)=\Dom(-\widetilde\Delta)$.
So, $-\Delta$ and $-\widetilde\Delta$ are monotone. We also obtain
for $u\in H^1(m)$ that
$$e(u,u)=\widetilde e(u,u)\geq \|u\|_{H^1}^2.$$
By Theorem \ref{th_lax_milgram}, for any $f\in L^2(m)$, there is $u\in
H^1(m)$ such that
$$e(u,v)=\langle f,v\rangle_{L^2(m)} \quad \mbox{for}\quad v\in
H^1(m).$$
So, $u$ is in $\Dom(\Delta)$ and the last equation is equivalent
to $u-\Delta u=f$. 
Hence, $-\Delta$ is maximal monotone. The case of $-\widetilde\Delta$ 
is treated in the same way. Note that since $-\widetilde\Delta$ is
symmetric and maximal monotone, it is self-adjoint but $-\Delta$ is not symmetric. 
\endproof

We have the following theorem.

\begin{theorem} \label{th_heat_real} 
Under the hypothesis of Proposition \ref{prop_delta_max}, let $S(t)$,
$t\in\R_+$, denote  the semi-group of contractions associated with the operator
$-\Delta$ or  $-\widetilde\Delta$ which is given by the Hille-Yosida
theorem. Then the  
measure $m$ is  $S(t)$-invariant and
$S(t)$ is a positive contraction in $L^p(m)$ for all $1\leq p\leq\infty$. 
\end{theorem}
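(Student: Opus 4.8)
The plan is to establish the two assertions separately, using the Hille-Yosida description of $S(t)$ and the structure already developed for $\Delta$ and $\widetilde\Delta$. For the invariance of $m$, I would first treat the case of $-\widetilde\Delta$, which is self-adjoint and symmetric by Lemma \ref{lemma_e_tilde}. Given $u_0\in\Dom(\widetilde\Delta)$, set $u(t,\cdot):=S(t)u_0$. Then $\frac{d}{dt}\int u(t,\cdot)\,dm=\int\frac{\partial u}{\partial t}\,dm=\int\widetilde\Delta u(t,\cdot)\,dm=0$, where the last equality is the content of Lemma \ref{lemma_e_tilde} (extended to $\Dom(\widetilde\Delta)$, as noted in the text). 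Hence $\int S(t)u_0\,dm=\int u_0\,dm$ for $u_0\in\Dom(\widetilde\Delta)$, and since this domain is dense in $L^2(m)$ and $S(t)$ is a contraction, the identity extends to all of $L^2(m)$; this says precisely that $m$ is $S(t)$-invariant. The same argument works verbatim for $-\Delta$, using $\int\Delta u\,dm=0$ from Lemma \ref{lemma_e_tilde}.

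For positivity and the $L^p$-contraction property, the natural tool is a maximum-principle argument at the level of the equation $\frac{\partial u}{\partial t}=\Delta u$ (resp. $\widetilde\Delta u$). The cleanest route is via Stampacchia's truncation method: for $u_0\in\Dom(\Delta)$ real-valued, let $u(t,\cdot)=S(t)u_0$ and test the equation against $u^+:=\max(u,0)$. Using that $\Delta-\widetilde\Delta$ is a vector field and the symmetric form $\widetilde q$, one computes $\frac12\frac{d}{dt}\int (u^-)^2\,dm = -\widetilde q(u,u^-)-\int(Fu)u^-\,dm$ and, since $\widetilde q(u,u^-)=-\int|\nabla u^-|^2\,dm$ and the first-order term is controlled, one deduces $\frac{d}{dt}\|u^-(t,\cdot)\|_{L^2}^2\le 0$, so that $u_0\ge 0$ implies $u(t,\cdot)\ge 0$. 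This gives positivity of $S(t)$. Then, to get the $L^p$-contraction, I would apply the same idea to $(u-c)^+$ for a constant $c$: combining positivity with the invariance $\int S(t)u_0\,dm=\int u_0\,dm$, and using that $S(t)\mathbf 1=\mathbf 1$ (which follows since $\Delta\mathbf 1=\widetilde\Delta\mathbf 1=0$, so $\mathbf 1$ is a stationary solution), one obtains $\|S(t)u_0\|_{L^\infty}\le\|u_0\|_{L^\infty}$ and $\|S(t)u_0\|_{L^1}\le\|u_0\|_{L^1}$; the cases $1<p<\infty$ then follow by the Riesz-Thorin interpolation theorem, or more elementarily by applying Jensen's inequality to the convex function $x\mapsto|x|^p$ against the sub-Markovian kernel $S(t)$.

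The main obstacle I anticipate is justifying the formal differentiations and truncation computations at the level of regularity we actually have: $S(t)u_0$ lies in $\Dom(\Delta)\subset H^1(m)$, but $u^+$ or $(u-c)^+$ need not be smooth, and one must know that truncation maps $H^1(m)$ to itself with $\nabla u^+=\mathbf 1_{\{u>0\}}\nabla u$ — this is where the Federer–Lusin description of $H^1(m)$ in terms of leafwise distributional gradients (invoked in the text before Proposition \ref{prop_delta_max}) does the work, reducing the chain rule to the classical one on generic plaques. A secondary technical point is the time-regularity needed to write $\frac{d}{dt}\|u^\pm\|_{L^2}^2$; for this one uses that $t\mapsto u(t,\cdot)$ is $\mathcal C^1$ into $L^2(m)$ (Hille–Yosida) together with the Lipschitz continuity of truncation on $L^2$, so the composed function is absolutely continuous with the expected derivative a.e. Once these regularity matters are settled, the estimates themselves are short and follow from the identities already proved in Lemmas \ref{lemma_e_tilde} and \ref{lemma_e_e_tilde}.
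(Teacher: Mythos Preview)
Your proposal is correct and follows essentially the same path as the paper: differentiate $t\mapsto\langle m,S(t)u_0\rangle$ and invoke Lemma \ref{lemma_e_tilde} for invariance; prove positivity via a Stampacchia-type truncation argument; then combine positivity, $S(t)1=1$, and invariance to get the $L^1$ and $L^\infty$ contractions and interpolate.

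The only noteworthy difference is in the execution of the maximum principle. You propose to test directly against the hard truncation $u^-$, which forces you to worry about whether $u^-\in H^1(m)$ and whether the chain rule $\nabla u^-=-\mathbf 1_{\{u<0\}}\nabla u$ holds in this setting; you correctly identify this as the main technical obstacle and appeal to the leafwise description of $H^1(m)$. The paper sidesteps this entirely by using a \emph{smooth} approximation: it fixes a smooth bounded increasing $G$ with $G(t)=0$ for $t\le 0$, sets $H(s)=\int_0^s G$, and differentiates $\xi(t)=\int H(u(t,\cdot)-K)\,dm$. Since $G$ is smooth and bounded with bounded derivative, $G(v)\in H^1(m)$ automatically and the computation $\xi'(t)=-\int G'(v)|\nabla v|^2\,dm\le 0$ (plus, for $-\Delta$, the extra term $\int F H(v)\,dm=0$ by Lemma \ref{lemma_e_tilde}) goes through without any regularity subtlety. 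Your route works but requires the justification you flagged; the paper's smooth $G$ makes that justification unnecessary. Also, a small slip: in your displayed formula for $\frac12\frac{d}{dt}\|u^-\|^2$ the signs are off (it should be $+\widetilde q(u,u^-)+\int(Fu)u^-\,dm$, and the first-order term actually \emph{vanishes} via $\int F(u^-)^2\,dm=0$, not merely ``is controlled'').
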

\proof
We prove that $m$ is invariant, that is 
$$\langle m,S(t)u_0\rangle = \langle m,u_0\rangle \quad\mbox{for}\quad u_0\in \Dc^0(X).$$
We will see later that this identity holds also for $u_0\in L^1(m)$
because $S(t)$ is a contraction in $L^1(m)$ and $\Dc^0(X)$ is dense in $L^1(m)$. Define 
$u:=S(t)u_0$ and 
$$\eta(t):=\langle m,S(t)u_0\rangle=\langle m,u(t,\cdot)\rangle.$$
We deduce from Theorem \ref{th_hille_yosida} that $\eta$ is of class $\Cc^1$ on $\R_+$ and that
$$\eta'(t)=\langle m,S'(t)u_0\rangle =\langle m,Au(t,\cdot)\rangle$$
where $A$ is the operator $-\Delta$ or $-\widetilde\Delta$. By Lemma \ref{lemma_e_tilde},
the last integral vanishes. So, $\eta$ is constant and hence $m$ is invariant.

In order to prove the positivity of $S(t)$, it is enough to show the following {\it maximum principle}: if $u_0$ is a function in $\Dc^0(X)$ such that $u_0\leq K$ for some constant $K$, then $u(t,x)\leq K$. 
To show  the maximum  principle  we  use a trick due to  Stampacchia \cite{Brezis}.  Fix a smooth bounded function
 $G:\R\rightarrow \R_+$ with bounded first derivative such that  $G(t)=0$ for  $t\leq 0$
and   $G'(t)>0$ for  $t>0.$  Put
$$H(s):=\int_0^s G(t) dt. $$
Consider  the  non-negative  function $\xi:\R_+\rightarrow \R_+$  given by 
$$\xi(t):=\int H(u(t,\cdot)-K)dm.$$
By Theorem \ref{th_hille_yosida}, $\xi$ is of class $\Cc^1$. 
We  want to show that it is  identically zero. Define $v(t,x):=u(t,x)-K$. We have $Av(t,x)=Au(t,x)$. Using in particular that $G$ is bounded, 
we obtain
\begin{eqnarray*}
\xi'(t)&=&\int  G(u(t,\cdot)-K)\frac{\partial u(t,\cdot)}{\partial t} dm\\
&=&-\int  G(u(t,\cdot)-K) A u(t, \cdot) dm\\
&=&-\int  G(v(t,\cdot))Av(t,\cdot)dm.
\end{eqnarray*}
When $A=-\widetilde \Delta$, by Lemma \ref{lemma_e_tilde}, the last integral is equal to
$$-\int \langle \nabla  G(v),\nabla v\rangle_g dm =-\int G'(v)|\nabla v|^2dm\leq 0.$$
Thus, $ \xi'(t)\leq  0$. We deduce that $\xi=0$ and hence
$u(x,t)\leq  K$. 

When $A=-\Delta$, since $F=\widetilde\Delta-\Delta$ is a vector field
with bounded coefficients, the considered integral is equal to
$$-\int G'(v)|\nabla v|^2dm+\int G(v)Fvdm=-\int G'(v)|\nabla v|^2dm+\int FH(v)dm.$$
By Lemma \ref{lemma_e_tilde}, the last integral vanishes.
So, we also obtain that $\xi'(t)\leq 0$. This completes the proof of
the maximum principle which implies the positivity of $S(t)$. 

The positivity of $S(t)$ together with the invariance of $m$ imply that 
$$\|S(t)u_0\|_{L^1(m)}\leq \|u_0\|_{L^1(m)} \quad \mbox{for} \quad u_0\in\Dc^0(X).$$
It follows that $S(t)$ extends continuously to a positive contraction in $L^1(m)$ since $\Dc^0(X)$ is dense in $L^1(m)$. 
The uniqueness of the solution in  Theorem \ref{th_hille_yosida} implies that $S(t)1=1$. This together with 
the positivity of $S(t)$ imply that $S(t)$ is a contraction in $L^\infty(m)$. Finally, the classical theory of interpolation between the Banach spaces $L^1(m)$ and $L^\infty(m)$  implies that $S(t)$ is a contraction in $L^p(m)$ for all $1\leq p\leq\infty$, see Triebel \cite{Triebel}.
\endproof

We have the following proposition which can be applied to functions whose derivatives of orders 1 and 2 are in $L^2(m)$. 

\begin{proposition} \label{prop_subharm}
Let $(X,\Lc)$, $g$ and $m$ be as in Proposition \ref{prop_delta_max}. Then  every  function  $u_0$ in 
$\Dom(\Delta)$ (which is equal to $\Dom(\widetilde\Delta)$) such that $\Delta u_0\geq  0$  (resp.  $\widetilde{\Delta}u_0\geq  0$)  is  constant on the leaf $L_a$ for $m$-almost every $a$. Moreover, if $m$ is an extremal positive harmonic measure, then $u_0$ is constant $m$-almost everywhere.
\end{proposition}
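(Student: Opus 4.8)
The plan is to reduce both cases to the single identity $\int|\nabla u_0|^2\, dm=0$ and then to read this off leafwise. First treat the case $\widetilde\Delta u_0\ge 0$. Since $u_0\in\Dom(\widetilde\Delta)$, Lemma \ref{lemma_e_tilde} (valid on $\Dom(\Delta)=\Dom(\widetilde\Delta)$, as noted there) gives $\int\widetilde\Delta u_0\, dm=0$; combined with $\widetilde\Delta u_0\ge 0$ this forces $\widetilde\Delta u_0=0$ $m$-a.e. Plugging $v=u_0$ into the defining relation $\widetilde q(u,v)=-\int(\widetilde\Delta u)v\, dm$ and into the identity $\widetilde q(u,v)=\int\langle\nabla u,\nabla v\rangle_g\, dm$ of Lemma \ref{lemma_e_tilde}, I obtain
$$0=-\int(\widetilde\Delta u_0)\,u_0\, dm=\widetilde q(u_0,u_0)=\int|\nabla u_0|^2\, dm ,$$
so $\nabla u_0=0$ $m$-a.e. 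The case $\Delta u_0\ge 0$ runs in parallel: $\int\Delta u_0\, dm=0$ by Lemma \ref{lemma_e_tilde}, whence $\Delta u_0=0$ $m$-a.e. and $q(u_0,u_0)=-\int(\Delta u_0)\,u_0\, dm=0$; but $q(u_0,u_0)=\widetilde q(u_0,u_0)=\int|\nabla u_0|^2\, dm$ by Lemmas \ref{lemma_e_e_tilde} and \ref{lemma_e_tilde}, so again $\nabla u_0=0$ $m$-a.e.

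Next I would pass from $\nabla u_0=0$ $m$-a.e. to constancy along leaves. Working in a flow box $\U\simeq\B\times\T$, write $m$ locally as an integral $\int(h_a\Omega)\, d\mu(a)$ over plaques (Propositions \ref{prop_struc_current} and \ref{prop_choquet}), with $h_a>0$ harmonic and $\mu$ a positive transversal measure. The restriction of $u_0\in H^1(m)$ to $\mu$-generic plaques is a Sobolev function whose leafwise gradient represents $\nabla u_0$ (this uses the local structure of currents and the Lusin-type argument described just before Proposition \ref{prop_delta_max}). Since $h_a>0$, the vanishing of $\nabla u_0$ $m$-a.e. means that $u_0$ is a.e. constant on $\mu$-almost every plaque of $\U$; covering $X$ by countably many flow boxes and patching these constants along the countable chains of plaques forming a leaf, I obtain a saturated $m$-conull set on which $u_0$ is constant on each leaf. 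This gives the first assertion.

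Finally, assuming $m$ is an extremal positive harmonic measure, set $Y_t:=\{u_0>t\}$ for $t\in\R$. By the previous step $Y_t$ agrees, modulo an $m$-null set, with a measurable union of leaves, so I may take it saturated; restricting the local disintegration $\int(h_a\Omega)\, d\mu(a)$ to the transversal set $\{a:\B\times\{a\}\subset Y_t\}$ shows (via Proposition \ref{prop_choquet} again) that $\ind_{Y_t}\,m$ is again a positive harmonic measure, and hence so is $m-\ind_{Y_t}\,m=\ind_{X\setminus Y_t}\,m$. Extremality of $m$ forces $\ind_{Y_t}\,m=c(t)\,m$, i.e. $\ind_{Y_t}=c(t)$ $m$-a.e., so $m(Y_t)\in\{0,m(X)\}$ for all $t$; therefore $u_0$ equals $m$-a.e. the constant $\sup\{t:m(Y_t)>0\}$, which is the last assertion.

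The main obstacle is the middle step: converting the $L^2(m)$-statement $\nabla u_0=0$ into honest leafwise constancy requires the leafwise Sobolev description of $H^1(m)$ on generic plaques together with a careful countable patching argument. All the input needed for it is already in place (Propositions \ref{prop_struc_current}, \ref{prop_struc_current_unique}, \ref{prop_choquet} and the discussion preceding Proposition \ref{prop_delta_max}), so once that is granted the remaining computations are routine.
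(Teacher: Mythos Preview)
Your proof is correct and follows essentially the same approach as the paper: both arguments use Lemma~\ref{lemma_e_tilde} to obtain $\int\Delta u_0\,dm=\int\widetilde\Delta u_0\,dm=0$, deduce $\Delta u_0=0$ (resp.\ $\widetilde\Delta u_0=0$), and then combine Lemmas~\ref{lemma_e_tilde} and~\ref{lemma_e_e_tilde} to conclude $\int|\nabla u_0|^2\,dm=0$. The paper is terser on the last two steps---it simply asserts leafwise constancy from $\nabla u_0=0$ $m$-a.e., and for the extremal case invokes directly that every measurable union of leaves has zero or full $m$-measure---whereas you spell out the flow-box/Sobolev patching and the level-set argument; but these are just the details behind the paper's one-line claims, not a different route.
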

\proof
We know that 
$$\int \Delta u_0 dm =\int \widetilde\Delta u_0 dm=0.$$
So, the hypothesis implies that $\Delta u_0=0$ (resp. $\widetilde\Delta u_0=0$). 
By Lemmas \ref{lemma_e_tilde} and \ref{lemma_e_e_tilde}, we deduce that
$$\int |\nabla u_0|^2dm=-\int(\widetilde\Delta u_0)u_0dm =-\int (\Delta u_0)u_0dm=0.$$
It follows that $\nabla u_0=0$ almost everywhere with respect to $m$. 
Thus, $u_0$ is constant on $L_a$ for $m$-almost every $a$. 
When $m$ is extremal, this property implies that $u_0$ is constant $m$-almost everywhere, since every measurable set of leaves has zero or full $m$ measure.
\endproof

We deduce from the above results the following ergodic theorem.

\begin{corollary} \label{cor_ergodic}
Under the hypothesis of Theorem \ref{th_heat_real}, for all $u_0\in  L^p(m),$  $1\leq p<\infty$, the average 
$$\frac{1}{R}\int_0^R S(t)u_0 dt$$ 
converges pointwise  $m$-almost everywhere
 and also in $L^p(m)$ to 
an $S(t)$-invariant function  $u_0^*$ when $R$ goes to infinity.
Moreover, $u_0^*$ is constant on the leaf $L_a$ for $m$-almost every $a$.
If $m$ is an extremal harmonic measure, then $u$ is constant $m$-almost everywhere.
\end{corollary}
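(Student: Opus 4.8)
The plan is to recognise $(S(t))_{t\ge 0}$ as a Dunford--Schwartz semigroup and to apply the classical continuous-time pointwise ergodic theorem, then to identify the limit using the analysis of $\widetilde\Delta$ carried out above. By Theorem \ref{th_heat_real}, each $S(t)$ is a positive contraction of $L^p(m)$ for $1\le p\le\infty$ and $m$ is $S(t)$-invariant, i.e. $\int S(t)u\,dm=\int u\,dm$ (and $S(t)1=1$). By the Hille--Yosida theorem the semigroup is strongly continuous on $L^2(m)$; since $S(t)u_0\to u_0$ in $L^2(m)$ for $u_0\in\Dc^0(X)$ and $m$ is finite, this convergence holds in $L^1(m)$ too, and density of $\Dc^0(X)$ in $L^1(m)$ together with the uniform bound $\|S(t)\|_{L^1(m)\to L^1(m)}\le1$ gives strong continuity on $L^1(m)$. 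Thus $(S(t))$ is a strongly continuous semigroup of positive operators which are simultaneously contractions of $L^1(m)$ and of $L^\infty(m)$.

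Since $m$ is a probability measure, $L^p(m)\subset L^1(m)$ for $1\le p<\infty$, so it is enough to treat $u_0\in L^1(m)$. The continuous-time Dunford--Schwartz pointwise ergodic theorem (which reduces, in the usual way, to its discrete version applied to the operator $S(1)$) then produces a function $u_0^*\in L^1(m)$ such that
$$A_Ru_0:=\frac1R\int_0^R S(t)u_0\,dt\ \longrightarrow\ u_0^*$$
both $m$-almost everywhere and in $L^1(m)$ as $R\to\infty$. The resulting map $u_0\mapsto u_0^*$ is a positive linear contraction of $L^1(m)$, and $\int u_0^*\,dm=\lim_R\int A_Ru_0\,dm=\int u_0\,dm$ by the invariance of $m$. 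For $1<p<\infty$ the averages $A_Ru_0$ are bounded in $L^p(m)$, so, $m$ being finite and the convergence holding $m$-a.e., Vitali's convergence theorem upgrades it to convergence in $L^p(m)$; for $p=1$ the $L^1$-convergence is part of the ergodic theorem just invoked.

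It remains to describe $u_0^*$. For $s\ge0$,
$$S(s)A_Ru_0=A_Ru_0+\frac1R\int_R^{R+s}S(t)u_0\,dt-\frac1R\int_0^s S(t)u_0\,dt,$$
and the last two terms tend to $0$ in $L^1(m)$ because $\|S(t)u_0\|_{L^1(m)}\le\|u_0\|_{L^1(m)}$; letting $R\to\infty$ and using continuity of $S(s)$ on $L^1(m)$ gives $S(s)u_0^*=u_0^*$ for every $s\ge0$, so $u_0^*$ is $S(t)$-invariant. For leafwise constancy, first take $u_0\in\Dc^0(X)$: then $u_0^*\in L^2(m)$ is a fixed vector of the strongly continuous contraction semigroup $(S(t))$ on $L^2(m)$, hence $u_0^*\in\Dom(\Delta)=\Dom(\widetilde\Delta)$ with $\Delta u_0^*=\widetilde\Delta u_0^*=0$, the difference quotient $t^{-1}(S(t)u_0^*-u_0^*)$ being identically zero. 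Proposition \ref{prop_subharm}, applied with $\Delta u_0^*\ge0$, then shows that $u_0^*$ is constant on $L_a$ for $m$-almost every $a$, and $m$-a.e. constant when $m$ is extremal; equivalently, Lemmas \ref{lemma_e_tilde} and \ref{lemma_e_e_tilde} give $\int|\nabla u_0^*|^2\,dm=-\int(\widetilde\Delta u_0^*)u_0^*\,dm=0$. Since the $m$-classes of leafwise constant functions form a closed subspace of $L^1(m)$, the map $u_0\mapsto u_0^*$ is $L^1$-continuous, and $\Dc^0(X)$ is dense in $L^1(m)$, it follows that $u_0^*$ is constant on $L_a$ for $m$-almost every $a$ for every $u_0\in L^1(m)$, and in particular for every $u_0\in L^p(m)$. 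Finally, when $m$ is extremal every measurable union of leaves has $m$-measure $0$ or $1$ (as in the proof of Proposition \ref{prop_subharm}), so $u_0^*$ is $m$-a.e. equal to the constant $\int u_0\,dm$.

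The only genuinely non-formal ingredient is the $m$-a.e. convergence in the second paragraph; it rests entirely on verifying, through Theorem \ref{th_heat_real}, that we are in the Dunford--Schwartz framework (positivity and simultaneous $L^1$- and $L^\infty$-contraction), after which the pointwise ergodic theorem is classical. The only other point requiring care is passing the description of $u_0^*$ from the dense subspace $\Dc^0(X)$, where the $L^2$-theory and the lemmas on $\widetilde\Delta$ apply directly, to all of $L^p(m)$, which is handled by the density and closedness argument above.
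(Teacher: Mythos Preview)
Your proof follows essentially the same strategy as the paper's: invoke the Dunford--Schwartz ergodic theorem for the convergence, then show that the limit $u_0^*$ lies in $\Dom(A)$ with $Au_0^*=0$ and apply Proposition~\ref{prop_subharm}. The paper reaches $Au_0^*=0$ by computing $Au_R=\frac1R\big(u_0-S(R)u_0\big)\to 0$ in $L^2(m)$ and using that the graph of $A$ is closed; your route via ``$u_0^*$ is a fixed vector of the semigroup, hence the difference quotient vanishes and $u_0^*$ is in the kernel of the generator'' is equally valid and arguably more direct. One small slip: boundedness of $A_Ru_0$ in $L^p(m)$ does \emph{not} by itself give uniform integrability of $|A_Ru_0|^p$, so Vitali alone does not upgrade a.e.\ convergence to $L^p$-convergence for the given $p$; you need either the Dunford--Schwartz maximal inequality (which puts $\sup_R|A_Ru_0|$ in $L^p$ for $p>1$, allowing dominated convergence) or the standard density-from-$L^\infty$ argument. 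This is a minor point and does not affect the overall correctness of your approach.
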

\proof
The first assertion is a consequence of the ergodic theorem as in Dunford-Schwartz
\cite[Th. VIII.7.5]{DunfordSchwartz}. We get a   function $u_0^*$  which is  $S(t)$-invariant.
For the rest of the proposition, since $S(t)$ is a contraction 
in $L^p(m)$, it is enough to consider the case where 
$u_0$ is in $\Dc^0(X)$.

By Proposition \ref{prop_subharm}, we only have to check that $u_0^*$
is 
in the domain of $A$ and $Au_0^*=0$. 
Define
$$u_R:=\frac{1}{R}\int_0^R S(t)u_0 dt.$$ 
This function belongs to $\Dom(A)$. 
Since $u_R$ converges to $u_0^*$ in $L^2(m)$ and the graph of $A$ is closed in $L^2(m)\times L^2(m)$, it is enough to show
that 
$Au_R\rightarrow 0$ in $L^2(m)$. We have
$$Au_R=\frac{1}{R}\int_0^RAu(t,\cdot)dt=-\frac{1}{R}\int_0^R{\partial\over \partial t}u(t,\cdot)dt
={1\over R}u_0-{1\over R}u(R,\cdot).$$
Since $S(t)$ is a contraction in $L^2(m)$, the last expression tends to 0 in $L^2(m)$. The result follows. 
\endproof

We will need the following lemma.

\begin{lemma} \label{lemma_harm_pos}
Let $\widehat m=\theta m$ be a harmonic measure, not necessarily positive, where $\theta$ is a function in $L^2(m)$. Let $\widehat m=\widehat m^+ - \widehat m^-$ be the minimal decomposition 
of $\widehat m$ as the difference of two positive measures. Then $\widehat m^\pm$ are harmonic.
\end{lemma}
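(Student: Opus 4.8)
The plan is to reduce to the local structure of harmonic currents and use a Hahn–Jordan type decomposition at the level of the transversal measures and the harmonic densities. First I would use Proposition \ref{prop_choquet}: write $m = T\wedge\Omega$ for a positive harmonic current $T$, and $\widehat m = S\wedge\Omega$ where $S = \theta T$ is the (not necessarily positive) current associated with $\widehat m$; the harmonicity of $\widehat m$ translates into $\Delta$-harmonicity of $S$ leaf by leaf, which, combined with the description of currents in flow boxes (Proposition \ref{prop_struc_current}), means that in each flow box $\U\simeq \B\times\T$ one has $\widehat m = \int_\T \widehat m_a\, d\mu(a)$ with $\widehat m_a = h_a\,\Omega$ for $\mu$-almost every $a$, where $h_a$ is a \emph{signed} harmonic function on the plaque $\B\times\{a\}$. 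The key point is that a signed harmonic function $h_a$ never vanishes on an open subset of a connected plaque unless it is identically zero; more precisely, the nodal set $\{h_a = 0\}$ has zero Lebesgue measure when $h_a\not\equiv 0$, and the positivity/negativity sets $\{h_a>0\}$, $\{h_a<0\}$ are open.

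Next I would build the Hahn decomposition globally. Define the measurable set $P\subset X$ to be the union over all plaques (from a fixed countable atlas) of the sets where the corresponding harmonic density is positive, and $N$ similarly for the negative part; one must check this is well-defined $m$-almost everywhere, using Proposition \ref{prop_struc_current_unique} to see that the sign of $h_a$ on overlaps is intrinsic up to the positive normalizing function $\lambda$. Then set $\widehat m^+ := \mathbf{1}_P\,\widehat m$ and $\widehat m^- := -\mathbf{1}_N\,\widehat m$; since $h_a$ is harmonic and does not change sign on the connected components of $\{h_a>0\}$ and $\{h_a<0\}$, restricting $\widehat m$ to $P$ amounts, in each flow box, to replacing $h_a$ by $h_a^+ := \max\{h_a,0\}$ — but $h_a^+$ need not be harmonic. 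So instead I would argue that $P$ is (up to an $m$-null set) a union of whole leaves: because $\widehat m$ is harmonic, the argument of Proposition \ref{prop_subharm}/Proposition \ref{prop_choquet_c} applies to $\theta$, showing that the set $\{\theta>0\}$ is $m$-almost everywhere a union of leaves. Indeed $\theta\in L^2(m)$ and one decomposes $S = \theta T$ into its restrictions to $\{\theta>0\}$ and $\{\theta<0\}$ exactly as in the proof of Proposition \ref{prop_choquet_c}, where it is shown that such restrictions of a harmonic current along a $\{\theta\gtrless c\}$-type set are again harmonic currents.

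Concretely, following the proof of Proposition \ref{prop_choquet_c}: given the harmonic current $S=\theta T$, for a constant $c>0$ the currents $\max\{S,cT\}$ and $\min\{S,cT\}$ (defined via $\max/\min$ of the densities plaque-wise) are harmonic, because the plaque-wise minimum of two harmonic functions is superharmonic, hence defines a positive current $S'$ with $\ddc S'\le 0$ off $E$, hence $\ddc$-closed by Lemma \ref{lemma_ds}. Letting $c\to 0^+$ and $c\to -\infty$ (equivalently applying the same to $-S$), one obtains that the positive and negative parts $S^+ := \mathbf 1_{\{\theta>0\}} S$ and $S^- := -\mathbf 1_{\{\theta<0\}} S$ are themselves positive harmonic currents; then $\widehat m^\pm = S^\pm\wedge\Omega$ are positive harmonic measures and $\widehat m = \widehat m^+ - \widehat m^-$ with disjoint supports, so this is the minimal (Jordan) decomposition. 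The main obstacle is the first reduction: one must take care that $\{\theta>0\}$ is only defined $m$-almost everywhere and that the plaque-wise $\min$ construction is compatible across flow boxes and does not create mass on the singular set $E$; here Lemma \ref{lemma_ds} (finiteness of $E$) and the uniqueness statement of Proposition \ref{prop_struc_current_unique} do exactly the needed bookkeeping, just as in Proposition \ref{prop_choquet_c}.
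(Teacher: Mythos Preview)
Your proposal conflates the real and complex settings. Lemma~\ref{lemma_harm_pos} is in Section~\ref{section_heat_real}, where $(X,\Lc)$ is a compact \emph{real} Riemannian lamination; there is no $\ddc$, no ambient complex manifold $M$, and no singular set $E$, so Lemma~\ref{lemma_ds} is simply unavailable. The argument you borrow from Proposition~\ref{prop_choquet_c} genuinely relies on the complex structure via $\ddc$ and Lemma~\ref{lemma_ds}; in the real case the analogous statement is Corollary~\ref{cor_choquet}, which the paper proves \emph{from} Lemma~\ref{lemma_harm_pos} precisely because the complex-analytic trick is not at hand. There is also a more local error: you assert that $\min\{S, cT\}$ is a positive current, but $S=\theta T$ has signed plaque densities, so $\min\{\theta h_a, c h_a\}$ need not be $\geq 0$; the positivity in the proof of Proposition~\ref{prop_choquet_c} came from both inputs being positive currents, which fails here. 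One can attempt a repair (work with $\max\{S,cT\}$ for $c>0$, which \emph{is} positive with subharmonic plaque densities, and prove a real substitute for Lemma~\ref{lemma_ds} from $\langle m',\Delta 1\rangle=0$ on a compact lamination), but you do not supply this and it is not in the paper.

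The paper takes a completely different route, using the heat semigroup $S(t)$ of Theorem~\ref{th_heat_real}. One first shows $\widehat m$ is $S(t)$-invariant by differentiating $t\mapsto\langle \widehat m, S(t)u_0\rangle$ and using harmonicity of $\widehat m$; then, since $S(t)$ is a positive contraction on $L^1(m)$, the minimality of the Jordan decomposition forces $S(t)\widehat m^\pm=\widehat m^\pm$. Differentiating $t\mapsto\langle \widehat m^\pm, S(t)u_0\rangle$ at $t=0$ then yields $\langle\widehat m^\pm,\Delta u_0\rangle=0$. This is exactly where the semigroup machinery built earlier in the section pays off, and it bypasses any plaque-wise sub/superharmonic argument.
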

\proof
Let $S(t)$ be the semi-group of contractions in $L^1(m)$ associated with $-\Delta$ as above. Define the action of $S(t)$ on measures by
$$\langle S(t)\widehat m,u_0\rangle:=\langle \widehat m, S(t)u_0\rangle\quad \mbox{for}\quad u_0\in L^2(m).$$ 
Consider a function $u_0\in \Dc^0(X)$ and define $\eta(t):=\langle S(t)\widehat m,u_0\rangle$. By Theorem \ref{th_hille_yosida}, this is a $\Cc^1$ function on $\R_+$. We have since $\widehat m$ is harmonic and $\theta$ is in $L^2(m)$
$$\eta'(t)=\langle \widehat m, S'(t)u_0\rangle=\langle \widehat m, -\Delta (S(t)u_0)\rangle =0.$$
To see the last equality, we can use a partition of unity and the local description of $\widehat m$ on a flow box. 
So, $\eta$ is constant. It follows that $S(t)\widehat m=\widehat
m$. Since $S(t)$ is a positive  
contraction, we deduce that $S(t)\widehat m^\pm = \widehat m^\pm$. So, the functions $\eta^\pm(t):=\langle m^\pm,S(t)u_0\rangle$ are constant.
As above, we have
$$\langle \widehat m^\pm,\Delta u_0\rangle = -\langle \widehat m^\pm, S'(0)u_0\rangle=(\eta^\pm)'(0)=0.$$
Hence, $\widehat m^\pm$ are harmonic.
\endproof

We also obtain the following result, see Candel-Conlon \cite{CandelConlon2}.

\begin{corollary} \label{cor_choquet}
Under the hypothesis of Proposition \ref{prop_delta_max}, the family $\Hc$ of harmonic probability measures on $X$ is a
non-empty compact simplex and for any $m\in\Hc$ there 
is a unique probability measure $\nu$ on the set of extremal elements in $\Hc$ such that 
$m=\int m' d\nu(m')$. Moreover, two different harmonic probability measures are mutually singular.
\end{corollary}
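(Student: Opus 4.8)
The plan is to mimic the proof of Proposition~\ref{prop_choquet_c}, with harmonic measures in place of positive harmonic currents and Lemma~\ref{lemma_harm_pos} playing the role that Lemma~\ref{lemma_ds} played there. First, $\Hc$ is non-empty: this is Garnett's classical Hahn--Banach argument (which applies here precisely because $\Delta u$ is continuous for $u\in\Dc^0(X)$, by our standing assumption on $g$), see \cite{Garnett,CandelConlon2}. Since $X$ is a compact metric space, the space of measures on $X$ with the weak-$*$ topology is metrizable, and $\Hc$ is a closed convex subset of the probability measures because the maps $m\mapsto\int \Delta u\,dm$ are weak-$*$ continuous; hence $\Hc$ is a non-empty compact metrizable convex set. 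Choquet's representation theorem \cite{Choquet} then gives, for every $m\in\Hc$, a probability measure $\nu$ carried by the extremal points of $\Hc$ with $m=\int m'\,d\nu(m')$.

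The heart of the matter is the uniqueness of $\nu$, i.e. that $\Hc$ is a simplex. By the Choquet--Meyer criterion \cite[p.~163]{Choquet} it suffices to prove that the cone $\Cc:=\R_+\cdot\Hc$ of positive harmonic measures is a lattice for the order $m_1\prec m_2\iff m_2-m_1\in\Cc$. First observe that, for $m_1,m_2\in\Cc$, the relation $m_1\prec m_2$ is equivalent to $m_1\le m_2$ as measures: a positive measure that is a difference of two harmonic measures is harmonic by linearity, and a positive (finite) harmonic measure on the compact $X$ lies in $\Cc$. Now given $m_1,m_2\in\Cc$, set $m:=m_1+m_2$ and write by Radon--Nikodym $m_i=\theta_i m$ with $0\le\theta_i\le1$ and $\theta_1+\theta_2=1$. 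The signed measure $m_1-m_2=(\theta_1-\theta_2)m$ is harmonic with bounded, hence $L^2(m)$, density, so by Lemma~\ref{lemma_harm_pos} its positive part $\max\{\theta_1-\theta_2,0\}\,m$ is a harmonic measure (after normalizing $m$ to a probability measure, as required by that lemma). Consequently $\max\{m_1,m_2\}:=\max\{\theta_1,\theta_2\}\,m=m_2+\max\{\theta_1-\theta_2,0\}\,m$ and $\min\{m_1,m_2\}:=m_1-\max\{\theta_1-\theta_2,0\}\,m$ are positive harmonic measures, i.e. elements of $\Cc$, and one checks directly that they are the supremum and the infimum of $m_1,m_2$ for $\prec$. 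Hence $\Cc$ is a lattice and $\Hc$ is a simplex. (This is exactly the argument of Proposition~\ref{prop_choquet_c} with $\theta_iT$ replaced by $\theta_i m$; alternatively one could transport that argument via the correspondence $m=T\wedge\Omega$ of Proposition~\ref{prop_choquet}.)

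It remains to show that two distinct extremal $m,m'\in\Hc$ are mutually singular, and here the probability normalization makes things quicker than in Proposition~\ref{prop_choquet_c}. Let $\mu:=\min\{m,m'\}$, a positive harmonic measure by the previous paragraph, with $\mu\le m$ and $\mu\le m'$. Writing $m=\theta\sigma$, $m'=\theta'\sigma$ with $\sigma:=m+m'$ and $\theta+\theta'=1$, the vanishing $\mu=0$ means $\min\{\theta,\theta'\}=0$ $\sigma$-almost everywhere, i.e. $\{\theta>0\}$ and $\{\theta'>0\}$ are disjoint up to a $\sigma$-null set, so $m\perp m'$, which is the desired conclusion. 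Suppose instead $\mu\neq0$ and put $c:=\mu(X)\in(0,1]$. If $c=1$ then $\mu=m$ (both have mass $1$ and $\mu\le m$), whence $m\le m'$ and then $m=m'$, a contradiction; similarly $\mu=m'$ is impossible. If $c<1$, then $m-\mu$ and $m'-\mu$ are positive harmonic of positive mass, and $m=c\cdot\tfrac{\mu}{c}+(1-c)\cdot\tfrac{m-\mu}{1-c}$ is a non-trivial convex combination in $\Hc$; extremality of $m$ forces $\mu=cm$, and symmetrically $\mu=cm'$, hence $m=m'$, again a contradiction. Therefore $\mu=0$ and $m\perp m'$.

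\textbf{Main obstacle.} The only non-formal step is showing that $\min\{m_1,m_2\}$ and $\max\{m_1,m_2\}$ remain harmonic; this is exactly where Lemma~\ref{lemma_harm_pos} is invoked, and the small care needed is to normalize $m_1+m_2$ to a probability measure and to note that the relevant Radon--Nikodym densities are bounded, hence in $L^2$. Everything else is soft convexity (Choquet, Choquet--Meyer) and elementary measure theory.
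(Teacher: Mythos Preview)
Your proof is correct. The first half---non-emptiness via Hahn--Banach, compactness, and the simplex property via Choquet--Meyer and Lemma~\ref{lemma_harm_pos}---is essentially identical to the paper's argument, including the trick of writing $\max\{m_1,m_2\}=m_2+\max\{\theta_1-\theta_2,0\}m$ so that Lemma~\ref{lemma_harm_pos} applies.

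For mutual singularity of extremal elements you take a genuinely shorter route than the paper. The paper works locally in a flow box: it decomposes the transversal measure of $m'$ relative to that of $m$ as $\mu'=\varphi\mu+\mu''$, treats the singular part $\mu''$ by a support argument, and in the absolutely continuous case writes $m'=\theta m$, chooses a threshold $c$ with both $\{\theta\ge c\}$ and $\{\theta\le c\}$ of positive $m$-measure, and uses the auxiliary harmonic measures $m_c^\pm=\max\{\pm(\theta-c),0\}m$ together with extremality to force $\theta=c$. Your argument stays entirely at the abstract convex-cone level: having already shown the cone is a lattice, you take $\mu=\min\{m,m'\}$, note it is harmonic, and use extremality of $m$ and $m'$ directly on the convex decomposition $m=c\cdot(\mu/c)+(1-c)\cdot\frac{m-\mu}{1-c}$ to force $\mu=cm=cm'$. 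This is cleaner and avoids any local analysis; the paper's approach, on the other hand, makes the leafwise structure of harmonic measures more visible, which is in the spirit of the surrounding arguments (and mirrors the proof of Proposition~\ref{prop_choquet_c}).
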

\proof
Elements in $\Hc$ are defined as probability measures $m$ such that $\langle m,\Delta u\rangle=0$ for $u\in\Dc^0(X)$. It is clear that $\Hc$ is convex and compact. The fact that $\Hc$ is non-empty is well-known and is a consequence of 
the Hahn-Banach theorem, see L. Garnett \cite{Garnett}. Indeed, by maximum principle for subharmonic functions,
the distance in $\Dc^0_0(X)$ between $1$ and the space $\{\Delta\varphi,\
\varphi\in\Dc^0(X)\}$ is equal to 1. Therefore, Hahn-Banach's theorem
implies the existence of a linear form $m$ on $\Dc^0_0(X)$ of norm $\leq 1$
such that $m(1)=1$. The form defines a harmonic measure of mass $\leq
1$ which is necessarily a probability measure because $m(1)=1$.

By Choquet's representation theorem \cite{Choquet}, we can decompose $m$ into extremal measures as in the corollary. We show that the decomposition is unique. Define $m:=m_1+m_2$ and $\theta_i$ a function in $L^1(m)$, $0\leq \theta_i\leq 1$, such that $m_i=\theta_im$. 
Define also $m_1\vee m_2:=\max\{\theta_1,\theta_2\}m$ and $m_1\wedge m_2:=\min\{\theta_1,\theta_2\}m$.
As in Proposition \ref{prop_choquet_c}, it is enough to show that these measures belong to the cone generated by $\Hc$. By Lemma \ref{lemma_harm_pos}, since $m_1-m_2$ is harmonic, $m':=\max\{\theta_1-\theta_2,0\}m$ is harmonic. It follows that $m_1\vee m_2=m'+m_2$ and $m_1\wedge m_2=m_1-m'$ are harmonic. This completes the proof of the first assertion as a consequence of the Choquet-Meyer theorem \cite[p.163]{Choquet}.

Consider now two different 
extremal elements $m,m'$ in $\Hc$. We show that they are mutually singular.
Consider the decomposition of $m$ and $m'$ in a flow box $\U\simeq \B\times \T$ as above
$$m=\int h_a [\B\times\{a\}]\wedge \Omega d\mu(a) \quad \mbox{and}\quad
m'=\int h_a' [\B\times\{a\}]\wedge \Omega d\mu'(a).$$
We can restrict $\mu,\mu'$ in order to assume that $h_a\not=0$ for $\mu$-almost every $a$ and 
$h_a'\not=0$ for $\mu'$-almost every $a$.
Write $\mu'=\varphi \mu+\mu''$ where $\varphi$ is a positive function in $L^1(\mu)$ and $\mu,\mu''$ are mutually singular.
Choose a measurable set $A$ on the transversal $\T$ such that $\mu''(\T\setminus A)=0$ and $\mu(A)=0$. 
Let $\Sigma$ denote the union of the leaves which intersect $A$. Then, $m$ has no mass on $\Sigma$.  
If $\mu''$ is non-zero, then $m'$ has positive mass on $\Sigma$. Since $m'$ is extremal, it has no mass outside $\Sigma$. We deduce that $m$ and $m'$ are mutually singular.

Assume now that $\mu''=0$. 
Multiplying $h_a$ by $\varphi(a)$ allows us to assume that $\mu=\mu'$. Therefore, there is a non-negative function 
$\theta\in L^1(m)$ such that $m'=\theta m$. 
We can find a number $c>0$ such that $\{\theta\geq c\}$ and $\{\theta\leq c\}$ have positive $m$-measure.
Define $m_c^+:=\max\{\theta-c,0\}m$ and $m_c^-:=\max\{c-\theta,0\}m$. By Lemma 
\ref{lemma_harm_pos}, since 
$m_c^+=(m'\vee cm)-cm$ and $m_c^-=(cm\vee m')-m'$, these measures are harmonic.
So, we can choose a set $\Sigma'$ which is a union of leaves such that  $m_c^+$ has no mass outside $\Sigma'$ and $\theta> c$  on $\Sigma'$. The choice of $c$ implies that
 $m$  has positive mass outside $\Sigma'$. Since $m$ is extremal, we deduce that $m$ has no mass on $\Sigma'$.
It follows that $m_c^+=0$ and then 
$\theta\leq c$ almost everywhere. Using $m_c^-$, we prove in the same way that $\theta\geq c$ almost everywhere. Finally\footnote{We didn't find a simpler argument. The fact that $q(u,v)$ is not symmetric is a difficulty.}, we have $m'=cm$ and since $m,m'$ have the same mass, we get $m=m'$. This is a contradiction and completes the proof.
\endproof

The following result gives us a version of mixing property in our
context. 
The classical case is due to 
Kaimanovich \cite{Kaimanovich} who uses in particular the smoothness
of the Brownian motion, see also Candel \cite{Candel2} who relies on a
version of the zero-two law due to Ornstein and Sucheston \cite{Ornstein}.

\begin{theorem} \label{th_mixing}
Under the hypothesis of Theorem \ref{th_heat_real}, assume moreover that $m$ is extremal. If $S(t)$ is associated to $-\widetilde\Delta$,  then
$S(t)u_0$ converge to $\langle m,u_0\rangle$ in $L^p(m)$ when $t\rightarrow\infty$ 
for $u_0\in L^p(m)$ with $1\leq p<\infty$. In particular, $S(t)$ is mixing, i.e.
$$\lim_{t\rightarrow\infty} \langle S(t)u_0,v_0\rangle=\langle m,u_0\rangle \langle m,v_0\rangle\quad \mbox{for}\quad u_0,v_0\in L^2(m).$$
\end{theorem}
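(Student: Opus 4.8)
The plan is to establish the $L^p$-convergence $S(t)u_0\to\langle m,u_0\rangle$ for the symmetric semi-group generated by $-\widetilde\Delta$, and then derive mixing as an immediate consequence. Since each $S(t)$ is a contraction in every $L^p(m)$ with $1\le p<\infty$ (Theorem \ref{th_heat_real}) and $\Dc^0(X)$ is dense in $L^p(m)$, it suffices to prove the convergence for $u_0\in L^2(m)$; an interpolation/density argument then gives it for all $p$. So I work entirely in the Hilbert space $L^2(m)$, where $A:=-\widetilde\Delta$ is self-adjoint, monotone and maximal monotone by Proposition \ref{prop_delta_max}.

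First I would use the spectral theorem for the self-adjoint operator $A\ge 0$: write $A=\int_0^\infty\lambda\,dE_\lambda$, so that $S(t)=e^{-tA}=\int_0^\infty e^{-t\lambda}\,dE_\lambda$. For $u_0\in L^2(m)$ with $\langle m,u_0\rangle=0$ one has $\|S(t)u_0\|_{L^2}^2=\int_0^\infty e^{-2t\lambda}\,d\|E_\lambda u_0\|^2$, which by dominated convergence tends to $\|E_{\{0\}}u_0\|^2$, the squared norm of the projection of $u_0$ onto $\ker A$. The crux is therefore to identify $\ker A$: I claim that, because $m$ is \emph{extremal}, $\ker(\widetilde\Delta)$ consists only of the constants. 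Indeed, if $\widetilde\Delta u_0=0$ with $u_0\in\Dom(\widetilde\Delta)$, then in particular $\widetilde\Delta u_0\ge 0$, so Proposition \ref{prop_subharm} applies and $u_0$ is constant $m$-almost everywhere. Hence $E_{\{0\}}u_0=\langle m,u_0\rangle$, and for $u_0$ with zero mean we get $S(t)u_0\to 0$ in $L^2(m)$; for general $u_0$ we subtract the mean, using that $m$ is $S(t)$-invariant so $\langle m,S(t)u_0\rangle=\langle m,u_0\rangle$ and $S(t)$ fixes constants (which follows from $S(t)1=1$ established in Theorem \ref{th_heat_real}). This proves $S(t)u_0\to\langle m,u_0\rangle$ in $L^2(m)$.

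To pass from $L^2$ to $L^p$ for $1\le p<\infty$: for $p\le 2$, $L^2(m)\subset L^p(m)$ continuously since $m$ is a probability measure, so $L^2$-convergence gives $L^p$-convergence on the dense set $\Dc^0(X)$, and the uniform $L^p$-contraction property of $S(t)$ together with the boundedness of the limit map $u_0\mapsto\langle m,u_0\rangle$ on $L^p(m)$ extends the convergence to all of $L^p(m)$ by a standard $3\varepsilon$-argument. For $2<p<\infty$: given $u_0\in\Dc^0(X)$ and $\varepsilon>0$, one has $\|S(t)u_0-\langle m,u_0\rangle\|_{L^p}\le \|S(t)u_0-\langle m,u_0\rangle\|_{L^2}^{\theta}\,\big(2\|u_0\|_{L^\infty}\big)^{1-\theta}$ by the interpolation inequality $\|f\|_{L^p}\le\|f\|_{L^2}^\theta\|f\|_{L^\infty}^{1-\theta}$ (valid since $\|S(t)u_0-\langle m,u_0\rangle\|_{L^\infty}\le 2\|u_0\|_{L^\infty}$ by the contraction in $L^\infty$), and the right side tends to $0$; then density of $\Dc^0(X)$ in $L^p(m)$ plus uniform $L^p$-contraction of $S(t)$ finishes the case of general $u_0\in L^p(m)$.

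Finally, mixing follows formally: for $u_0,v_0\in L^2(m)$,
\[
\big|\langle S(t)u_0,v_0\rangle-\langle m,u_0\rangle\langle m,v_0\rangle\big|
=\big|\langle S(t)u_0-\langle m,u_0\rangle,\,v_0\rangle\big|
\le \|S(t)u_0-\langle m,u_0\rangle\|_{L^2}\,\|v_0\|_{L^2}\longrightarrow 0
\]
by Cauchy--Schwarz and the $L^2$-convergence just proved. The main obstacle is the spectral identification of $\ker(\widetilde\Delta)$ with the constants; this is exactly where extremality of $m$ is used, via Proposition \ref{prop_subharm}, and it is essential that $\widetilde\Delta$ (unlike $\Delta$) is self-adjoint so that the spectral theorem is available. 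A secondary technical point is justifying that the spectral-theoretic limit $E_{\{0\}}u_0$ really equals the conditional expectation onto constants rather than onto the larger space of leafwise-constant functions — but Proposition \ref{prop_subharm} shows these coincide precisely because $m$ is extremal (every measurable union of leaves has measure $0$ or $1$).
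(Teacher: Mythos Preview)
Your argument is correct and is, in fact, the cleanest way to prove this result. The paper's proof reaches the same conclusion but by a more hands-on route that avoids invoking the full spectral theorem: instead of writing $S(t)=\int_0^\infty e^{-t\lambda}\,dE_\lambda$, it uses the self-adjoint Hille--Yosida estimate $\|\partial_t u\|\le t^{-1}\|u_0\|$ together with $\|\nabla u\|_{L^2}^2\le\|\widetilde\Delta u\|_{L^2}\|u\|_{L^2}$ to get $\|\nabla S(t)u_0\|_{L^2}^2\lesssim t^{-1}$; this gives uniform boundedness in $H^1(m)$, hence a weak-$H^1$ limit $u_\infty$ with $\nabla u_\infty=0$, which is constant by extremality (the same use of Proposition~\ref{prop_subharm} you make). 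Strong $L^2$-convergence is then recovered via the self-adjointness identity $\|S(t)u_0\|_{L^2}^2=\langle S(2t)u_0,u_0\rangle$, which tends to $0$ once weak convergence to $0$ is known. Your spectral-theoretic approach is more direct and immediately identifies the limit as the projection onto $\ker(-\widetilde\Delta)$; the paper's approach is a bit more elementary and makes the role of the $H^1$-norm (and hence of the leafwise gradient) more explicit, which fits the geometric spirit of the paper. Both routes hinge on exactly the same two ingredients: self-adjointness of $-\widetilde\Delta$ and the identification of $\ker(-\widetilde\Delta)$ with the constants under extremality.
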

\proof
Since $S(t)$ is a contraction in $L^p(m)$ for $1\leq p\leq\infty$, it is enough to show the above convergence 
for $u_0$ in a dense subspace of $L^p(m)$. If $u_0$ is bounded, we have, $\|S(t)u_0\|_\infty\leq \|u_0\|_\infty$. Therefore, we only have to show the above convergence in $L^1(m)$. 
Since $S(t)$ preserves constant functions, we can assume without loss of generality that $\langle m,u_0\rangle=0$. We can also assume that $u_0\in H^1(m)$ and we will 
show that $S(t)u_0\to 0$ in $H^1(m)$.
 
Define $u:=S(t)u_0$. Using Cauchy-Schwarz's inequality,  Lemma \ref{lemma_e_e_tilde} and the last assertion in Theorem \ref{th_hille_yosida}, we obtain that 
$$\|\nabla u\|_{L^2(m)}^2
\leq \|\widetilde\Delta u\|_{L^2(m)}\|u\|_{L^2(m)}\lesssim{1\over t}\|u_0\|^2_{L^2(m)}.$$
It follows that $S(t)$ is bounded in $H^1(m)$ uniformly on $t$. 
So, we can consider a limit value $u_\infty$ of $S(t)u_0$, $t\to\infty$, in the weak sense in $H^1(m)$. 
We have
$$\|\nabla u_\infty\|_{L^2(m)}\leq\limsup_{t\rightarrow\infty}\|\nabla u\|_{L^2(m)}=0.$$
Using the description of $m$ and $u_\infty$ in flow boxes, we deduce that $u_\infty$ is constant on almost every leaf with respect to $m$. Since $m$ is extremal, $u_\infty$ is constant $m$-almost everywhere.
Finally, since $S(t)$ preserves $m$, we have  $\langle m,S(t)u_0\rangle=0$ and then $\langle m,u_\infty\rangle=0$. It follows that $u_\infty=0$. We deduce that $S(t)u_0\to 0$ weakly in $H^1(m)$.

Now, recall that $-\widetilde\Delta$ is self-adjoint on $L^2(m)$. Hence, $S(t)$ is also self-adjoint since it is obtained as the limit of  $\big(I-tn^{-1}\widetilde\Delta)^n$ where $I$ is the identity operator. Therefore, we have
$$\|S(t)u_0\|_{L^2}^2=\langle S_{2t} u_0,u_0\rangle_{L^2(m)}\rightarrow 0.$$
It follows that $S(t)u_0\rightarrow 0$ in $H^1(m)$. Note that
using E.M. Stein's theorem in \cite{Stein} on self-adjoint contractions of $L^p$, we can prove that $S(t)u_0\to \langle m,u_0\rangle$ $m$-almost everywhere for $u_0\in L^1(m)$. 
\endproof

\begin{proposition} \label{prop_dense_laplace}
Let $m=\int m'd\nu(m')$ be  as in Corollary \ref{cor_choquet}. 
Then the closures of $\Delta(\Dc^0(X))$ and of $\widetilde\Delta(\Dc^0(X))$ in $L^p(m)$, 
$1\leq p\leq 2$, is the space of functions 
$u_0\in L^p(m)$ such that $\int u_0dm'=0$ for $\nu$-almost every $m'$.  In particular, if $m$ is an 
extremal harmonic probability measure, then this space is the hyperplane of $L^p(m)$ defined by the equation 
$\int u_0dm=0$.
\end{proposition}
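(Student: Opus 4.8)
The plan is to settle the case $p=2$ by Hilbert space arguments and then to deduce $1\le p<2$ by approximation. Write $V_p\subset L^p(m)$ for the claimed space and let $P$ be the conditional expectation attached to the decomposition $m=\int m'\,d\nu(m')$, namely $Pu_0(x):=\int u_0\,dm'_x$ where $m'_x$ is the extremal component through $x$. By Jensen's inequality $P$ is a norm-one projector of $L^p(m)$ for every $1\le p\le 2$, so $V_p=\ker P$ is a closed subspace; it thus suffices to identify $\overline{\Delta(\Dc^0(X))}$ and $\overline{\widetilde\Delta(\Dc^0(X))}$ with $V_p$.

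The heart of the matter is to compute, in $L^2(m)$, the kernels of $\widetilde\Delta$ and of $\Delta^*$ (the adjoint of $\Delta$), and to check that both equal $V_2^\perp$. Integration by parts on the plaques, together with $\int F w\,dm=0$ from Lemma \ref{lemma_e_tilde}, gives $\Delta^*=\widetilde\Delta+F$; since $F$ has bounded coefficients this is again an operator with domain $\Dom(\Delta)=\Dom(\widetilde\Delta)$, and for $v$ in that domain one has $\langle F v,v\rangle=\tfrac12\int F(v^2)\,dm=0$, whence $\langle -\widetilde\Delta v,v\rangle=\langle -\Delta^* v,v\rangle=\int|\nabla v|^2\,dm$ by Lemmas \ref{lemma_e_tilde} and \ref{lemma_e_e_tilde}. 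Therefore $\widetilde\Delta v=0$ (resp. $\Delta^* v=0$) forces $\nabla v=0$, i.e. $v$ is constant on $m$-almost every leaf, and conversely any such $v$ lies in the common domain and is killed by both operators. Finally, a function constant on $m$-almost every leaf is constant $m'$-almost everywhere for $\nu$-almost every $m'$: for $\nu$-almost every $m'$ such a $v$ belongs to the domain of $\Delta$ on $L^2(m')$ with $\Delta v=0$, and $m'$ being extremal, Proposition \ref{prop_subharm} applies; the converse is clear. Since, over the decomposition $m=\int m'\,d\nu$, the orthogonal complement of the fibrewise mean-zero functions is exactly the fibrewise constant functions, this yields $\ker\widetilde\Delta=\ker\Delta^*=V_2^\perp$.

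Granting this, I would apply $\overline{\operatorname{range}A}=(\ker A^*)^\perp$ to the closed densely defined operators $A=\widetilde\Delta$ (self-adjoint by Proposition \ref{prop_delta_max}) and $A=\Delta$; using that $\Dc^0(X)$ is a core, so that $\overline{A(\Dc^0(X))}^{L^2}=\overline{\operatorname{range}A}$, we get $\overline{\widetilde\Delta(\Dc^0(X))}^{L^2}=(\ker\widetilde\Delta)^\perp=V_2$ and $\overline{\Delta(\Dc^0(X))}^{L^2}=(\ker\Delta^*)^\perp=V_2$. For $1\le p<2$: since $\Delta(\Dc^0(X))\subseteq V_2\subseteq V_p$ and $V_p$ is $L^p$-closed we have $\overline{\Delta(\Dc^0(X))}^{L^p}\subseteq V_p$ (and similarly for $\widetilde\Delta$); conversely $L^2$-convergence implies $L^p$-convergence, so $V_2=\overline{\Delta(\Dc^0(X))}^{L^2}\subseteq\overline{\Delta(\Dc^0(X))}^{L^p}$, and $V_2$ is $L^p$-dense in $V_p$ because any $u_0\in V_p$ is an $L^p$-limit of functions $g_n\in L^2(m)$, whence $g_n-Pg_n\in V_2$ converges to $u_0-Pu_0=u_0$. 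Hence $\overline{\Delta(\Dc^0(X))}^{L^p}=\overline{\widetilde\Delta(\Dc^0(X))}^{L^p}=V_p$. For extremal $m$ one has $\nu=\delta_m$, so $V_p=\{u_0\in L^p(m):\int u_0\,dm=0\}$, which is the final assertion.

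The decisive — and essentially the only non-routine — point will be the passage, in the second paragraph, from ``constant on $m$-almost every leaf'' to ``constant on each ergodic piece $m'$'': this is precisely where extremality of the $m'$ and Proposition \ref{prop_subharm} are needed, and where one must be careful that a leafwise constant $L^2(m)$-function still lies in the relevant domain of $\Delta$ over $L^2(m')$. Once $\ker\widetilde\Delta$ and $\ker\Delta^*$ are pinned down as $V_2^\perp$, everything else is standard functional analysis and requires no heat-kernel or interpolation input.
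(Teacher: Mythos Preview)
Your approach is genuinely different from the paper's and, once a gap is filled, works. The paper does not compute $\ker\Delta^*$ at all: it takes $\theta\in L^q(m)$ orthogonal to $\Delta(\Dc^0(X))$, observes that $\theta m$ is a (signed) harmonic measure, and then uses Lemma~\ref{lemma_harm_pos} to show that the level measures $m_c$ in the disintegration $m=\int m_c\,d\nu'(c)$ along the fibres of $\theta$ are themselves harmonic; the uniqueness of the Choquet decomposition (Corollary~\ref{cor_choquet}) then forces $\theta$ to be constant on $\nu$-almost every extremal piece. This bypasses any discussion of $\Dom(\Delta^*)$, never passes through ``leafwise constant'', and treats all $1\le p\le 2$ at once via $L^q\subset L^2$.

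Your route --- closed range/kernel duality in $L^2$ followed by an $L^p$ density argument via the conditional expectation $P$ --- is clean and conceptually attractive, but the sentence ``since $F$ has bounded coefficients this is again an operator with domain $\Dom(\Delta)=\Dom(\widetilde\Delta)$'' is not a proof that $\Dom(\Delta^*)=\Dom(\widetilde\Delta)$. The vector field $F$ has bounded coefficients but is a first-order operator, hence unbounded on $L^2(m)$; the formal identity $\Delta^*=\widetilde\Delta+F$ only gives the inclusion $\Dom(\widetilde\Delta)\subset\Dom(\Delta^*)$. To get equality you must show that $-(\widetilde\Delta+F)$ is itself maximal monotone (e.g.\ run Lax--Milgram for the form $\widetilde e(u,v)+\langle Fu,v\rangle$, which is coercive because $\langle Fu,u\rangle=0$ on $H^1(m)$) and then use that a maximal monotone operator has no proper monotone extension, together with the fact that the adjoint of an $m$-accretive operator is $m$-accretive. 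Without this your computation $\langle -\Delta^* v,v\rangle=\int|\nabla v|^2\,dm$ is unjustified, since you do not yet know $v\in H^1(m)$. A smaller point: the step $\langle Fv,v\rangle=\tfrac12\int F(v^2)\,dm=0$ is only formal for $v\in H^1(m)$ (you do not know $v^2\in H^1(m)$); the clean justification is that $(u,w)\mapsto\langle Fu,w\rangle+\langle u,Fw\rangle$ is continuous on $H^1\times H^1$ and vanishes on $\Dc^0\times\Dc^0$, hence identically, so $\langle Fv,v\rangle=0$ for all $v\in H^1(m)$.
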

\proof
We only consider the case of $\Delta$; the case of $\widetilde\Delta$ is treated in the same way. 
It is clear that $\Delta(\Dc^0(X))$ is a subset of the space of $u_0\in L^p(m)$ such that $\int u_0dm'=0$ for $\nu$-almost every $m'$ and the last space is closed in $L^p(m)$. Consider a function $\theta\in L^q(m)$, with $1/p+1/q=1$, which is orthogonal to $\Delta(\Dc^0(X))$. So, $\theta m$ is a harmonic measure.
Since $p\leq 2$, we have $\theta\in L^2(m)$. 
We have to show that $\theta$ is constant with respect to $\nu$-almost every $m'$. 

Consider the disintegration of $m$ along the fibers of $\theta$. There is a probability measure $\nu'$ 
on $\R$ and probability measures $m_c$ on $\{\theta=c\}$ such that $m=\int m_c d\nu'(c)$.  
By Lemma \ref{lemma_harm_pos}, for any $c\in\R$, the measure $\max\{\theta,c\}m$ is harmonic. Therefore, $m_c$ is harmonic for $\nu'$-almost every $c$.  If $\nu_c$ is the probability measure associated with $m_c$ as in Corollary \ref{cor_choquet}, we deduce from the uniqueness in this lemma that 
$$\nu=\int\nu_c d\nu'(c).$$
Now, since $\theta$ is constant $m_c$-almost everywhere, it is constant with respect to $\nu_c$-almost every $m'$. We deduce from the above identity that $\theta$ is constant for $\nu$-almost every $m'$. This completes the proof.
\endproof

\begin{remark}\rm
We will see in the next section an analogous development for the heat equation on
Riemann  surface laminations with isolated singularities. It is 
known that a compact Riemann surface lamination with a tame set of singularities always admits 
a harmonic probability measure  \cite{BerndtssonSibony}. 
For Riemannian laminations it would be interesting to find natural hypothesis on singularities  
which guarantees the existence of such a measure.
\end{remark}

\section{Case of singular Riemann surface laminations} \label{section_heat_complex}

We will consider in this section the heat equation for positive harmonic 
currents associated with a compact 
Riemann surface lamination possibly with singularities. The main
example we have in mind is the case of a current supported on the set
of hyperbolic leaves which are endowed with the Poincar{\'e} metric. Since
in general, Poincar{\'e}'s metric does not depend continuously on the
leaves, it is important that we can relax the strong regularity of the
metric which is a necessary condition in the real setting. 

Consider a more general situation. Let $T$ be a positive $\ddbar$-closed
current of bidimension $(1,1)$ with compact support in a complex manifold $M$ of dimension
$k$. For simplicity, fix a Hermitian form $\omega$ on $M$. So,
$T\wedge\omega$ is a positive measure. 
We assume that $T$ is regular in the sense of
\cite{BerndtssonSibony}, that is, there exists a $(1,0)$-form $\tau$
defined almost everywhere with respect to $T\wedge\omega$ such that
$$\partial T=\tau\wedge T \quad \mbox{and} \quad \int i\tau\wedge
\overline{\tau}\wedge T<\infty.$$ 
In this context, the H{\"o}rmander $L^2$-estimates are proved in
\cite{BerndtssonSibony} for the $\dbar$-equation induced on $T$. 
Fix also a positive $(1,1)$-form $\beta$ which is defined
almost everywhere with respect to $T\wedge\omega$ such that
$T\wedge\beta$ is of finite mass. We assume that
$\beta$ is {\it strictly $T$-positive} in the sense that $T\wedge\omega$ is
absolutely continuous with respect to $T\wedge\beta$. 
This condition\footnote{The form $\beta$ plays the role of a ``Hermitian
  metric" on the current $T$ that can be seen as a ``generalized
  submanifold" of $M$.}
does not depend on the choice of 
$\omega$ and allows us to define 
the operators $\nabla^\partial_\beta$,
$\nabla^\dbar_\beta$ and $\nabla_\beta$ on $u\in\Dc^0(M)$ by
$$(\nabla^\partial_\beta u) T\wedge \beta:=i\partial u\wedge 
\overline \tau\wedge T=i\partial(u\dbar T),\quad
(\nabla^\dbar_\beta u) T\wedge \beta:=-i\dbar u\wedge \tau\wedge T=-i\dbar(u\partial T)$$
and 
$$\nabla_\beta:=\nabla^\partial_\beta+\nabla^\dbar_\beta.$$
Define also the
operators $\Delta_\beta$ and $\widetilde\Delta_\beta$ on
$u\in\Dc^0(M)$ by
$$(\Delta_\beta u) T\wedge\beta:=i\ddbar u\wedge T 
\quad \mbox{and}\quad \widetilde\Delta_\beta:=\Delta_\beta+{1\over 2}\nabla_\beta.$$

Let $m_\beta$ denote the measure $T\wedge\beta$. 
We  introduce  the Hilbert space  $H^1_\beta(T)\subset L^2(m_\beta)$ 
associated with $T$ and  $\beta$  as the  completion of $\Dc^0(M)$ with  respect  
to the norm\footnote{The second integral does not depend on $\beta$.
This is an important difference in comparison with the analogous notion in the real setting.}
$$\|u\|^2_{H^1_\beta}:=\int |u|^2  T\wedge\beta +i\int \partial u
\wedge \dbar u\wedge T.$$
Observe 
the operators $\nabla^\partial_\beta$, $\nabla^\dbar_\beta$ and $\nabla_\beta$ are defined on $H^1_\beta(T)$ with values in $L^1(m_\beta)$. 

Define also for $u,v\in\Dc^0(M)$ (for simplicity, we only consider real-valued functions)
$$q(u,v):=-\int (\Delta_\beta u)vT\wedge\beta, \quad e(u,v):=q(u,v)+\int uv T\wedge\beta$$
and
$$\widetilde q(u,v):=-\int (\widetilde \Delta_\beta u)vT\wedge\beta, \quad 
\widetilde e(u,v):=\widetilde q(u,v)+\int uv T\wedge \beta.$$

We will define later the
domain of $\Delta$ and $\widetilde\Delta$ which allows us to extend
these identities to more general $u$ and $v$. 
The following lemma also holds for $u,v$ in 
the domain of $\Delta$ and $\widetilde\Delta$.

\begin{lemma} \label{lemma_e_tilde_c}
We have 
for $u,v\in\Dc^0(M)$
$$\widetilde q(u,v)=\Re\int  i\partial u\wedge \dbar v \wedge T 
\quad \mbox{and}\quad \int (\widetilde \Delta_\beta u)v T\wedge\beta = \int
u(\widetilde \Delta_\beta v) T\wedge\beta.$$
In particular, $\widetilde q(u,v)$ and $\widetilde e(u,v)$ are symmetric in $u,v$ and 
$$\int (\widetilde\Delta_\beta u) T\wedge\beta=
\int (\Delta_\beta u) T\wedge\beta=\int (\nabla_\beta u) T\wedge\beta 
=0 \quad \mbox{for}\quad u\in\Dc^0(X).$$
\end{lemma}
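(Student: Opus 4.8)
The plan is to pair $(\widetilde\Delta_\beta u)\,T\wedge\beta$ with the test function $v$, expand it via the definitions, and integrate by parts, exploiting two structural facts about $T$: it is real, so $\dbar T=\overline{\partial T}=\overline\tau\wedge T$; and it is $\ddbar$-closed, so $\partial\dbar T=\dbar\partial T=0$. Since $u,v$ are compactly supported test functions and $T$ has compact support, Stokes' theorem for currents gives $\langle\partial S,1\rangle=\langle\dbar S,1\rangle=0$ for any current $S$ of the appropriate degree with compact support, and all the first-order terms involving $\tau$ that appear are genuinely integrable against $m_\beta$ by Cauchy--Schwarz and the regularity hypothesis $\int i\tau\wedge\overline\tau\wedge T<\infty$ — this is precisely the content of the already-recorded fact that $\nabla^\partial_\beta u$ and $\nabla^{\dbar}_\beta u$ lie in $L^1(m_\beta)$ — so every integration by parts below is licit.

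Write $(\widetilde\Delta_\beta u)\,T\wedge\beta=i\ddbar u\wedge T+\tfrac12\big(i\partial u\wedge\overline\tau\wedge T-i\dbar u\wedge\tau\wedge T\big)$ and note that $\int v\,(\Delta_\beta u)\,T\wedge\beta=\int v\,i\partial\dbar u\wedge T$. I would compute this last integral in two ways. Expanding $0=\int i\partial(v\,\dbar u\wedge T)$ with the Leibniz rule gives $\int v\,i\ddbar u\wedge T=-\int i\partial v\wedge\dbar u\wedge T+\int i v\,\dbar u\wedge\tau\wedge T$; expanding instead $0=\int i\dbar(v\,\partial u\wedge T)$ and using $\dbar\partial u=-\partial\dbar u=-\ddbar u$ together with $\dbar T=\overline\tau\wedge T$ gives $\int v\,i\ddbar u\wedge T=-\int i\partial u\wedge\dbar v\wedge T-\int i v\,\partial u\wedge\overline\tau\wedge T$. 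Averaging the two identities and observing that the leftover first-order terms satisfy $\int iv\,\dbar u\wedge\tau\wedge T-\int iv\,\partial u\wedge\overline\tau\wedge T=-\int v\,(\nabla_\beta u)\,T\wedge\beta$, one obtains, after adding back the $\tfrac12\nabla_\beta$-term,
$$\int(\widetilde\Delta_\beta u)v\,T\wedge\beta=-\tfrac12\int i\partial v\wedge\dbar u\wedge T-\tfrac12\int i\partial u\wedge\dbar v\wedge T.$$
Since $u,v$ and $T$ are real, $\int i\partial v\wedge\dbar u\wedge T=\overline{\int i\partial u\wedge\dbar v\wedge T}$, so the right-hand side equals $-\Re\int i\partial u\wedge\dbar v\wedge T$; this is the first asserted identity $\widetilde q(u,v)=\Re\int i\partial u\wedge\dbar v\wedge T$.

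The remaining assertions follow formally. The displayed formula for $\widetilde q$ is symmetric in $u,v$ (the two orderings are complex conjugates, hence have equal real parts), so $\widetilde e$ is symmetric as well, and symmetry of $\widetilde q$ is exactly $\int(\widetilde\Delta_\beta u)v\,T\wedge\beta=\int u(\widetilde\Delta_\beta v)\,T\wedge\beta$. For the last line: $\int(\Delta_\beta u)\,T\wedge\beta=\int i\ddbar u\wedge T=0$ because $\ddbar T=0$ and $u$ is a test function; $\int(\nabla_\beta u)\,T\wedge\beta=\int i\partial(u\,\dbar T)-\int i\dbar(u\,\partial T)=0$ by Stokes, since $u\,\dbar T$ and $u\,\partial T$ are compactly supported of bidegree $(k-1,k)$ and $(k,k-1)$, on which $\dbar$, resp. $\partial$, vanishes for bidegree reasons, so $\partial(u\,\dbar T)=d(u\,\dbar T)$ and $\dbar(u\,\partial T)=d(u\,\partial T)$ have vanishing integral; hence $\int(\widetilde\Delta_\beta u)\,T\wedge\beta=0$ (equivalently this is $-\widetilde q(u,v)$ for any $v\in\Dc^0(M)$ equal to $1$ on a neighbourhood of $\supp T$, where $\dbar v\wedge T=0$).

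The only real difficulty is the bookkeeping of the boundary terms. The structural point — and the reason $\widetilde\Delta_\beta$ carries the correction $\tfrac12\nabla_\beta$ instead of being $\Delta_\beta$ — is that, unlike in the smooth Riemannian case, $T$ is not $\partial$-closed: $\partial T=\tau\wedge T\neq0$, so moving $\partial$ or $\dbar$ across $T$ leaves first-order residues, and symmetrizing (averaging the two integrations by parts of $i\ddbar u\wedge T$) forces these residues to reassemble exactly into $\tfrac12\int v\,(\nabla_\beta u)\,T\wedge\beta$, which the correction absorbs. The measure-theoretic subtleties ($\tau$ defined only $m_\beta$-almost everywhere, $M$ possibly non-compact) are handled by the regularity hypothesis and the compact support of $T$, as indicated above.
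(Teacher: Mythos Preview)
Your proof is correct and follows essentially the same route as the paper: both reduce the identity to an integration by parts of $\int v\,i\ddbar u\wedge T$ against $\partial T=\tau\wedge T$ and $\dbar T=\overline\tau\wedge T$. The only organizational difference is that the paper performs a single integration by parts (moving $\dbar$ off $i\partial\dbar u$) after first observing $\tfrac12(\nabla_\beta u)\,vT\wedge\beta=\Re\big(i\partial u\wedge v\overline\tau\wedge T\big)$ and taking real parts throughout, whereas you perform both the $\partial$- and $\dbar$-integrations by parts and average; since the two identities you obtain are complex conjugates of each other, your averaging is exactly the paper's ``take $\Re$'', so the computations are the same.
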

\proof
Note that $i\ddbar u$ is a real form.
Since $T$ is $\ddbar$-closed, the integral of $i\ddbar u^2\wedge T$
vanishes. We deduce using Stoke's formula that
\begin{eqnarray*}
\widetilde q(u,v) & = & -\int (\Delta_\beta u+ {1\over 2}\nabla_\beta u)  v T\wedge\beta\\
& = &  -\int  i\ddbar u\wedge v T
-\Re\int i \partial u\wedge \overline \tau \wedge vT\\
& = &  -\Re\int  i\ddbar u\wedge v T
-\Re\int i \partial u\wedge \overline \tau \wedge vT\\
& = &  \Re\int i\partial u\wedge \big[\dbar (v T)-v\dbar T\big]=
\Re\int i\partial u\wedge \dbar v\wedge T.
\end{eqnarray*}
This gives the first identity in the lemma.

We also have since $T$ is $\ddbar$-closed
$$\int(\nabla_\beta u) T\wedge\beta=2\Re\int i\partial u\wedge \dbar T=2\Re\int -iu\wedge \ddbar T=0.$$
The other assertions in the lemma are obtained 
as in Lemma \ref{lemma_e_tilde}.
\endproof

We say that $T$ is {\it $\beta$-regular} if $i\tau\wedge\overline\tau\wedge T
\leq \beta\wedge T$. We will see that this hypothesis is satisfied for foliations with linearizable singularities and for $\beta:=\omega_P$. 
We have the following lemma.

\begin{lemma} \label{lemma_e_e_tilde_c}
The bilinear forms $\widetilde q$ and $\widetilde e$ extend
  continuously to $H^1_\beta(T)\times H^1_\beta(T)$. 
Assume that $T$ is $\beta$-regular. Then the same property holds for $q$ and $e$.
Moreover, we have
$q(u,u)=\widetilde q(u,u)$ and $e(u,u)=\widetilde e (u,u)$ for $u\in H^1_\beta(T)$. 
\end{lemma}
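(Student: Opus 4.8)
The plan is to handle $\widetilde q,\widetilde e$ first (no extra hypothesis needed), and then to deduce the statement for $q,e$ by bounding the difference $q-\widetilde q$, mirroring the proof of Lemma~\ref{lemma_e_e_tilde} in the real case, with the r\^ole played there by the boundedness of the drift vector field $F$ now taken over by the $\beta$-regularity of $T$. I begin from the formula $\widetilde q(u,v)=\Re\int i\partial u\wedge\dbar v\wedge T$ of Lemma~\ref{lemma_e_tilde_c}. Since $T$ is a positive current of bidimension $(1,1)$, the sesquilinear form $(\alpha_1,\alpha_2)\mapsto\int i\alpha_1\wedge\overline{\alpha_2}\wedge T$ on $(1,0)$-forms with $L^2$ coefficients is positive semi-definite, so Cauchy--Schwarz applies. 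Taking $u,v$ real and using $\dbar v=\overline{\partial v}$ I would get
\[
|\widetilde q(u,v)|\le\Big(i\int\partial u\wedge\dbar u\wedge T\Big)^{1/2}\Big(i\int\partial v\wedge\dbar v\wedge T\Big)^{1/2}\le\|u\|_{H^1_\beta}\|v\|_{H^1_\beta},
\]
and, together with $\big|\int uv\,T\wedge\beta\big|\le\|u\|_{L^2(m_\beta)}\|v\|_{L^2(m_\beta)}\le\|u\|_{H^1_\beta}\|v\|_{H^1_\beta}$, this shows $\widetilde q$ and $\widetilde e$ are bounded on $\Dc^0(M)\times\Dc^0(M)$ for the $H^1_\beta$-norm, hence extend continuously to $H^1_\beta(T)\times H^1_\beta(T)$.

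For $q$ and $e$ I use $\widetilde\Delta_\beta=\Delta_\beta+\tfrac12\nabla_\beta$, which gives $q(u,v)-\widetilde q(u,v)=\tfrac12\int(\nabla_\beta u)\,v\,T\wedge\beta$; it therefore suffices to bound this expression. For real $u$ the function $\nabla_\beta u$ is real and one has $-i\dbar u\wedge\tau\wedge T=\overline{i\partial u\wedge\overline\tau\wedge T}$, so $(\nabla_\beta u)\,T\wedge\beta=2\Re\big(i\partial u\wedge\overline\tau\wedge T\big)$ and hence $q(u,v)-\widetilde q(u,v)=\Re\int i\partial u\wedge\overline{v\tau}\wedge T$. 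Applying Cauchy--Schwarz for the positive current $T$ to the pair $(\partial u,\,v\tau)$ and then the $\beta$-regularity $i\tau\wedge\overline\tau\wedge T\le\beta\wedge T$, I would obtain
\[
|q(u,v)-\widetilde q(u,v)|\le\Big(i\int\partial u\wedge\dbar u\wedge T\Big)^{1/2}\Big(\int v^2\,i\tau\wedge\overline\tau\wedge T\Big)^{1/2}\le\|u\|_{H^1_\beta}\|v\|_{L^2(m_\beta)}\le\|u\|_{H^1_\beta}\|v\|_{H^1_\beta}.
\]
Thus $q-\widetilde q$ is bounded for the $H^1_\beta$-norm, so $q$ and $e$ also extend continuously to $H^1_\beta(T)\times H^1_\beta(T)$.

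For the identities $q(u,u)=\widetilde q(u,u)$ and $e(u,u)=\widetilde e(u,u)$ on $H^1_\beta(T)$, by continuity and density of $\Dc^0(M)$ it is enough to treat $u\in\Dc^0(M)$. The operators $\nabla^\partial_\beta$ and $\nabla^\dbar_\beta$ are derivations (immediate from $i\partial(u^2)\wedge\overline\tau\wedge T=2u\,i\partial u\wedge\overline\tau\wedge T$ and its conjugate, using $\ddbar T=0$), so $\nabla_\beta(u^2)=2u\nabla_\beta u$. Applying the last identity of Lemma~\ref{lemma_e_tilde_c} to $u^2\in\Dc^0(M)$ gives $\int\nabla_\beta(u^2)\,T\wedge\beta=0$, whence $q(u,u)-\widetilde q(u,u)=\tfrac12\int u(\nabla_\beta u)\,T\wedge\beta=\tfrac14\int\nabla_\beta(u^2)\,T\wedge\beta=0$; adding $\int u^2\,T\wedge\beta$ to both sides gives the statement for $e$.

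The one genuinely delicate point is the second paragraph: one has to rewrite $(\nabla_\beta u)\,v\,T\wedge\beta$ as twice the real part of $i\partial u\wedge\overline{v\tau}\wedge T$ and then invoke Cauchy--Schwarz for the positive current $T$ applied to the $L^2$-form $v\tau$ (bearing in mind that $\tau$ is only defined $T\wedge\omega$-almost everywhere, so all these manipulations take place in $L^2$ of the relevant currents). This is exactly the place where $\beta$-regularity is used, and it is the analogue of the step in the real case where the boundedness of the coefficients of $F$ was invoked; everything else is a routine mimicking of Lemma~\ref{lemma_e_e_tilde}.
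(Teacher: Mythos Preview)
Your proof is correct and follows essentially the same approach as the paper: Cauchy--Schwarz for $\widetilde q$ from Lemma~\ref{lemma_e_tilde_c}, then Cauchy--Schwarz plus $\beta$-regularity to bound $q-\widetilde q$ via $\Re\int i\partial u\wedge \overline{v\tau}\wedge T$, and finally the vanishing of $q(u,u)-\widetilde q(u,u)$ on $\Dc^0(M)$ via $\partial(u^2)=2u\partial u$ and $\ddbar T=0$. The only cosmetic difference is that the paper writes the last step directly as $\tfrac12\Re\int i\partial u^2\wedge\dbar T=-\tfrac12\Re\int iu^2\ddbar T=0$, whereas you phrase it through the Leibniz rule for $\nabla_\beta$ and the last identity of Lemma~\ref{lemma_e_tilde_c}; these are the same computation (and note that $\ddbar T=0$ is not needed for the Leibniz rule itself, only for the vanishing of the integral).
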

\proof
The first assertion is deduced from Lemma
\ref{lemma_e_tilde_c}. Assume that   $T$ is $\beta$-regular.
Using Cauchy-Schwarz's inequality, we have for $u,v\in\Dc^0(M)$
\begin{eqnarray*}
|q(u,v)-\widetilde q(u,v)|^2 & \leq & \Big|\int \partial u \wedge
v\overline\tau \wedge T \Big|^2 \\
& \leq &  \Big(\int i\partial u\wedge \dbar u
\wedge T \Big) \Big(\int iv^2\tau\wedge\overline\tau \wedge T\Big)\\  
& \leq & \Big(\int i\partial u\wedge \dbar u
\wedge T \Big) \Big(\int v^2 \beta \wedge T\Big)
\end{eqnarray*}
which implies the second assertion. We also have for $u\in\Dc^0(M)$
\begin{eqnarray*}
q(u,u)-\widetilde q(u,u) & = & \Re\int(\nabla_\beta^\partial
u)uT\wedge\beta=\Re\int i\partial u\wedge u \overline\tau\wedge T \\
& = & {1\over 2}\Re\int i\partial u^2 \wedge \dbar T=-{1\over 2}\Re\int
iu^2 \ddbar T= 0. 
\end{eqnarray*}
This allows us to obtain the other properties as in Lemma \ref{lemma_e_e_tilde_c}.
\endproof

Define the domain 
$\Dom(\widetilde\Delta_\beta)$ of $\widetilde\Delta_\beta$
(resp. $\Dom(\Delta_\beta)$ of $\Delta_\beta$ when $T$ is $\beta$-regular) as the space of $u\in
H^1_\beta(T)$ such that $\widetilde q(u,\cdot)$ (resp. $q(u,\cdot)$) extends to a linear
continuous form on $L^2(m_\beta)$. When $T$ is $\beta$-regular, we
have seen in the proof of Lemma \ref{lemma_e_e_tilde_c} that
$q(u,v)-\widetilde q(u,v)$ is continuous on $H^1_\beta(T)\times
L^2(m_\beta)$. Therefore, $\Dom(\Delta_\beta)=\Dom(\widetilde\Delta_\beta)$.
We have the following proposition.

\begin{proposition} \label{prop_delta_max_c}
Let $T$ be a positive $\ddbar$-closed current of bidimension 
$(1,1)$ on a complex manifold $M$ and $\beta$ 
a strictly $T$-positive form as above. Then the associated operator
$-\widetilde\Delta_\beta$ (resp. $-\Delta_\beta$ when $T$ is
$\beta$-regular) is maximal monotone on $L^2(m_\beta)$ 
where $m_\beta:=T\wedge\beta$. In particular,
it is the infinitesimal generator of semi-groups of contractions on $L^2(m_\beta)$ and its graph is closed.
\end{proposition}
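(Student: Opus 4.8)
The plan is to follow exactly the scheme of the proof of Proposition~\ref{prop_delta_max}, replacing the real Laplacians and Lemmas~\ref{lemma_e_tilde}--\ref{lemma_e_e_tilde} by their complex counterparts Lemmas~\ref{lemma_e_tilde_c}--\ref{lemma_e_e_tilde_c}. Since the last assertion of the proposition is a formal consequence of the Hille-Yosida theorem (Theorem~\ref{th_hille_yosida}) together with the general properties of maximal monotone operators recalled just before that theorem, it suffices to show that $-\widetilde\Delta_\beta$, and $-\Delta_\beta$ when $T$ is $\beta$-regular, are maximal monotone on $L^2(m_\beta)$.

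First I would check monotonicity. For real $u\in\Dc^0(M)$, Lemma~\ref{lemma_e_tilde_c} gives $\langle-\widetilde\Delta_\beta u,u\rangle_{L^2(m_\beta)}=\widetilde q(u,u)=\int i\partial u\wedge\dbar u\wedge T$, which is $\geq0$ because $i\partial u\wedge\dbar u$ is a positive $(1,1)$-form and $T$ is a positive current. When $T$ is $\beta$-regular, Lemma~\ref{lemma_e_e_tilde_c} yields in addition $\langle-\Delta_\beta u,u\rangle=q(u,u)=\widetilde q(u,u)\geq0$. These inequalities extend by continuity from $\Dc^0(M)$ (dense in $H^1_\beta(T)$ by construction) to all $u$ in the respective domains, using that $q$ and $\widetilde q$ extend continuously to $H^1_\beta(T)\times H^1_\beta(T)$.

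Next I would prove maximality via the Lax-Milgram theorem (Theorem~\ref{th_lax_milgram}) on the Hilbert space $H:=H^1_\beta(T)$. By Lemma~\ref{lemma_e_e_tilde_c} the bilinear form $\widetilde e$ is continuous on $H\times H$, and by Lemmas~\ref{lemma_e_tilde_c} and \ref{lemma_e_e_tilde_c},
$$\widetilde e(u,u)=\widetilde q(u,u)+\int|u|^2\,T\wedge\beta=i\int\partial u\wedge\dbar u\wedge T+\int|u|^2\,T\wedge\beta=\|u\|^2_{H^1_\beta},$$
so the coercivity hypothesis of Theorem~\ref{th_lax_milgram} holds, in fact with equality. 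Given $f\in L^2(m_\beta)$, the linear form $v\mapsto\langle f,v\rangle_{L^2(m_\beta)}$ on $H^1_\beta(T)\subset L^2(m_\beta)$ belongs to $H^*$, so Lax-Milgram provides a unique $u\in H^1_\beta(T)$ with $\widetilde e(u,v)=\langle f,v\rangle_{L^2(m_\beta)}$ for all $v\in H^1_\beta(T)$; equivalently $\widetilde q(u,v)=\langle f-u,v\rangle_{L^2(m_\beta)}$, which shows $\widetilde q(u,\cdot)$ extends to a continuous form on $L^2(m_\beta)$, i.e. $u\in\Dom(\widetilde\Delta_\beta)$, and $u-\widetilde\Delta_\beta u=f$. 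Hence $-\widetilde\Delta_\beta$ is maximal monotone. When $T$ is $\beta$-regular, the same argument with $e$ in place of $\widetilde e$ (again continuous on $H\times H$ and satisfying $e(u,u)=\widetilde e(u,u)=\|u\|^2_{H^1_\beta}$) produces $u$ with $u-\Delta_\beta u=f$, so $-\Delta_\beta$ is maximal monotone as well.

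I do not expect a serious obstacle: everything reduces to the two lemmas already established, and the coercivity is actually an equality here, so even the mild complication of an unbounded drift term present in the real case is absent. The one point requiring a little care is the functional-analytic bookkeeping --- checking that the identity produced by Lax-Milgram is precisely the resolvent equation $u+(-\widetilde\Delta_\beta)u=f$ for the domain defined just above the proposition, and that $L^2(m_\beta)$ embeds in $(H^1_\beta(T))^*$ via the $L^2$ pairing so that every $f\in L^2(m_\beta)$ is an admissible right-hand side. As in the real case, one should also note that since $\widetilde q$ is symmetric, $-\widetilde\Delta_\beta$ is in fact self-adjoint, whereas $-\Delta_\beta$ need not be symmetric.
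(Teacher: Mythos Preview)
Your proposal is correct and follows essentially the same approach as the paper: the paper's proof simply records the coercivity inequalities $\widetilde e(u,u)\geq\|u\|^2_{H^1_\beta}$ and $e(u,u)\geq\|u\|^2_{H^1_\beta}$ from Lemmas~\ref{lemma_e_tilde_c}--\ref{lemma_e_e_tilde_c} and then refers the reader back to the proof of Proposition~\ref{prop_delta_max}, which is exactly the scheme you have written out in detail.
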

\proof
By Lemmas \ref{lemma_e_tilde_c} and \ref{lemma_e_e_tilde_c}, we have for $u\in H_\beta^1(T)$
$$\widetilde e(u,u)\geq \|u\|^2_{H_\beta^1} \quad \mbox{and}\quad
e(u,u)\geq \|u\|^2_{H_\beta^1}.$$
We only have to follow the same lines as in the proof of Proposition \ref{prop_delta_max}.
\endproof

We have the following theorem.

\begin{theorem} \label{th_heat_c} 
Under the hypothesis of Proposition \ref{prop_delta_max_c}, let $S(t)$,
$t\in\R_+$, denote  the semi-group of contractions associated with the operator
$-\widetilde\Delta$ (resp. $-\Delta$ when $T$ is $\beta$-regular)
which is given by the Hille-Yosida
theorem. Then the  
measure $m_\beta:=T\wedge\beta$ is  $S(t)$-invariant and
$S(t)$ is a positive contraction in $L^p(m_\beta)$ for all $1\leq p\leq\infty$. 
\end{theorem}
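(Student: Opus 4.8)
The plan is to mirror the proof of Theorem \ref{th_heat_real} line by line, replacing the Riemannian ingredients by their current-theoretic analogues that were set up just above (Lemmas \ref{lemma_e_tilde_c} and \ref{lemma_e_e_tilde_c} and Proposition \ref{prop_delta_max_c}). Write $A$ for the generator, i.e. $A=-\widetilde\Delta_\beta$ in general, or $A=-\Delta_\beta$ when $T$ is $\beta$-regular.

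\textbf{Invariance of $m_\beta$.} First I would show $\langle m_\beta,S(t)u_0\rangle=\langle m_\beta,u_0\rangle$ for $u_0\in\Dc^0(M)$. Set $u(t,\cdot):=S(t)u_0$ and $\eta(t):=\langle m_\beta,u(t,\cdot)\rangle$. By the Hille--Yosida theorem (Theorem \ref{th_hille_yosida}), $\eta$ is $\Cc^1$ and $\eta'(t)=-\langle m_\beta,Au(t,\cdot)\rangle$. But $\langle m_\beta,Au(t,\cdot)\rangle=\int(\widetilde\Delta_\beta u(t,\cdot))\,T\wedge\beta$ (resp. with $\Delta_\beta$), which vanishes by the last identities in Lemma \ref{lemma_e_tilde_c}. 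Hence $\eta$ is constant. Note one should check here that $u(t,\cdot)$ lies in $\Dom(\widetilde\Delta_\beta)$, so the identity of Lemma \ref{lemma_e_tilde_c} applies; this is exactly the content of the remark in the excerpt that the lemma extends to the domain.

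\textbf{Maximum principle and positivity.} Exactly as in the proof of Theorem \ref{th_heat_real}, I would use Stampacchia's trick: fix a smooth bounded $G:\R\to\R_+$ with bounded derivative, $G=0$ on $(-\infty,0]$, $G'>0$ on $(0,\infty)$, put $H(s):=\int_0^s G(t)\,dt$, and for $u_0\in\Dc^0(M)$ with $u_0\le K$ consider $\xi(t):=\int H(u(t,\cdot)-K)\,T\wedge\beta$. With $v:=u-K$,
$$\xi'(t)=-\int G(v)\,(Av)(t,\cdot)\,T\wedge\beta.$$
When $A=-\widetilde\Delta_\beta$, Lemma \ref{lemma_e_tilde_c} rewrites this as $-\Re\int i\partial G(v)\wedge\dbar v\wedge T=-\int G'(v)\,i\partial v\wedge\dbar v\wedge T\le 0$ since $i\partial v\wedge\dbar v\wedge T\ge 0$ ($T$ positive). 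When $A=-\Delta_\beta$, I would split off $\tfrac12\nabla_\beta$: the extra term is $\tfrac12\int G(v)(\nabla_\beta v)\,T\wedge\beta$, which equals $\tfrac12\int(\nabla_\beta H(v))\,T\wedge\beta=0$ again by Lemma \ref{lemma_e_tilde_c} applied to $H(v)$ (the Leibniz rule $\nabla_\beta H(v)=G(v)\nabla_\beta v$ following from the definition of $\nabla_\beta$ as a first-order operator). So $\xi'\le0$, $\xi\equiv0$, giving $u(t,\cdot)\le K$; this is the maximum principle, hence $S(t)\ge0$.

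\textbf{Passage to $L^p$.} Positivity plus invariance of $m_\beta$ give $\|S(t)u_0\|_{L^1(m_\beta)}\le\|u_0\|_{L^1(m_\beta)}$ on $\Dc^0(M)$, so $S(t)$ extends to a positive contraction of $L^1(m_\beta)$ by density of $\Dc^0(M)$ in $L^1(m_\beta)$; and uniqueness in Theorem \ref{th_hille_yosida} forces $S(t)1=1$ (since $\widetilde\Delta_\beta 1=\Delta_\beta 1=0$), which with positivity yields contraction in $L^\infty(m_\beta)$. Riesz--Thorin interpolation between $L^1$ and $L^\infty$ (Triebel \cite{Triebel}) then gives contraction in every $L^p(m_\beta)$, $1\le p\le\infty$.

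The main obstacle I anticipate is purely bookkeeping about domains: the identities of Lemma \ref{lemma_e_tilde_c} are stated for $\Dc^0(M)$ and claimed to persist on $\Dom(\widetilde\Delta_\beta)=\Dom(\Delta_\beta)$, and in the Stampacchia argument one applies them not to $u(t,\cdot)$ itself but to the composed functions $G(v)$ and $H(v)$, which are only Lipschitz (not smooth) functions of something in the domain. So the genuinely delicate point is justifying that $G(v),H(v)\in H^1_\beta(T)$ with $\nabla_\beta G(v)=G'(v)\nabla_\beta v$ and that the integration-by-parts identity still holds; this is the analogue of the density/closure arguments the paper already invokes via Federer's Lusin theorem, and I would handle it by approximating $u_0$ by smooth functions and using that $S(t)$ and the relevant norms are continuous. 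Once that is in place, everything else is a transcription of the real case.
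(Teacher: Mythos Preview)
Your proposal is correct and follows essentially the same approach as the paper's proof, which explicitly says it repeats Theorem~\ref{th_heat_real} with the Riemannian identities replaced by Lemma~\ref{lemma_e_tilde_c}; the paper carries out exactly the two computations you indicate (the $\widetilde\Delta_\beta$ case via $-\Re\int iG'(v)\partial v\wedge\dbar v\wedge T\le 0$, and the extra $\nabla_\beta$ term via $G(v)\partial v=\partial H(v)$ and $\ddbar T=0$). The sign of the extra term should be $-\tfrac12$ rather than $+\tfrac12$, but since that term vanishes this is immaterial, and the domain bookkeeping you flag is treated in the paper exactly as in the real case.
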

\proof
The proof is almost the same as in Theorem \ref{th_heat_real}. We only give here some computations which are 
slightly different. We have with mostly the same notation
$$\xi(t):=\int H(u(t,\cdot)-K)T\wedge\beta$$
and
$$\xi'(t)=-\int  G(v(t,\cdot))Av(t,\cdot) T\wedge \beta$$
with $A=-\widetilde \Delta$ or $A=-\Delta$. 
In the first case, by Lemma \ref{lemma_e_tilde_c}, the last integral is equal to
$$-\Re\int i\partial  G(v)\wedge\dbar v \wedge T =-\Re\int iG'(v)\partial v\wedge\dbar v\wedge T
=-\int iG'(v)\partial v\wedge\dbar v\wedge T\leq 0.$$
When $A=-\Delta_\beta$, there is an extra term which is equal to
\begin{eqnarray*}
-{1\over 2}\int  G(v)\nabla_\beta v T\wedge \beta & = & 
-\Re\int  iG(v)\partial v\wedge\dbar T\\
& = & -\Re\int  i\partial H(v)\wedge\dbar T=\Re\int i H(v)\ddbar T=0.
\end{eqnarray*}
We can now follow closely the proof of Theorem \ref{th_heat_real}.
\endproof

As in the real case, we deduce from the above results the following ergodic theorem.

\begin{corollary} \label{cor_ergodic_c}
Under the hypothesis of Theorem \ref{th_heat_c}, for all 
$u_0\in  L^p(m_\beta),$  $1\leq p<\infty$, the average 
$$\frac{1}{R}\int_0^R S(t)u_0 dt$$ 
converges pointwise  $m_\beta$-almost everywhere
 and also in $L^p(m_\beta)$ to 
an $S(t)$-invariant function  $u_0^*$ when $R$ goes to infinity.
\end{corollary}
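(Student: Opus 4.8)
The plan is to follow the proof of Corollary \ref{cor_ergodic} almost verbatim; in the present setting the leaf-constancy statement is not claimed, so the only ingredient needed is Theorem \ref{th_heat_c}: $S(t)$ is a positive contraction on $L^p(m_\beta)$ for every $1\leq p\leq\infty$ and the finite measure $m_\beta=T\wedge\beta$ is $S(t)$-invariant. These are precisely the hypotheses of the continuous-time Dunford--Schwartz pointwise ergodic theorem (the one-parameter version of \cite[Th. VIII.7.5]{DunfordSchwartz}), so it suffices to check that this theorem is applicable and then to upgrade the conclusion to $L^p$-convergence and to identify the limit as $S(t)$-invariant.

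First I would record the strong continuity of the semigroup on each $L^p(m_\beta)$, $1\leq p<\infty$, which is what makes the averages $A_Ru_0:=\frac1R\int_0^RS(t)u_0\,dt$ well defined and ensures the strong measurability required by the ergodic theorem. On $L^2(m_\beta)$ this is part of the Hille--Yosida theorem applied through Proposition \ref{prop_delta_max_c}. For $u_0\in\Dc^0(M)$ one has $S(t)u_0\to u_0$ in $L^2(m_\beta)$ as $t\to0^+$ together with the uniform bound $\|S(t)u_0\|_{L^\infty(m_\beta)}\leq\|u_0\|_{L^\infty(m_\beta)}$; since $m_\beta$ is finite, this forces $S(t)u_0\to u_0$ in every $L^p(m_\beta)$ with $p<\infty$. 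By density of $\Dc^0(M)$ and the uniform contraction property, $(S(t))$ is a strongly continuous semigroup of positive contractions on each $L^p(m_\beta)$, $1\leq p<\infty$.

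Next, the Dunford--Schwartz ergodic theorem applied to the $L^1$--$L^\infty$ contraction $S(t)$ gives, for every $u_0\in L^1(m_\beta)$, the $m_\beta$-almost everywhere convergence of $A_Ru_0$ as $R\to\infty$ to a limit $u_0^*$; since $(X,m_\beta)$ is a finite measure space one has $L^p(m_\beta)\subset L^1(m_\beta)$, so this covers all $u_0\in L^p(m_\beta)$, $1\leq p<\infty$. For the $L^p$-convergence I would argue as follows: if $u_0$ is bounded, the $A_Ru_0$ are uniformly bounded in $L^\infty(m_\beta)$ and converge a.e., hence converge in $L^p(m_\beta)$ by dominated convergence; for general $u_0\in L^p(m_\beta)$, write $u_0=v+w$ with $v$ bounded and $\|w\|_{L^p(m_\beta)}$ arbitrarily small, and use that each $A_R$ is a contraction on $L^p(m_\beta)$ to see that $\limsup_{R,R'\to\infty}\|A_Ru_0-A_{R'}u_0\|_{L^p(m_\beta)}\leq 2\|w\|_{L^p(m_\beta)}$, so $(A_Ru_0)_R$ is Cauchy in $L^p(m_\beta)$; its limit agrees a.e. with $u_0^*$. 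Finally the $S(t)$-invariance of $u_0^*$ follows from $S(s)A_Ru_0=\frac1R\int_s^{R+s}S(t)u_0\,dt$, which gives $\|S(s)A_Ru_0-A_Ru_0\|_{L^p(m_\beta)}\leq\frac{2s}{R}\|u_0\|_{L^p(m_\beta)}\to 0$; letting $R\to\infty$ and using the continuity of $S(s)$ on $L^p(m_\beta)$ yields $S(s)u_0^*=u_0^*$.

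The only point requiring genuine care is the applicability of the continuous-parameter Dunford--Schwartz theorem, that is, the strong measurability of the orbit maps $t\mapsto S(t)u_0$ on $L^1(m_\beta)$; this is exactly what the strong-continuity step above provides. Everything else is formal bookkeeping, the analytic substance having already been carried out in Theorem \ref{th_heat_c}.
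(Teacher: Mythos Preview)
Your proposal is correct and follows essentially the same approach as the paper: the paper's proof is simply the sentence ``as in the real case'' pointing back to Corollary \ref{cor_ergodic}, whose first paragraph invokes the Dunford--Schwartz ergodic theorem \cite[Th.~VIII.7.5]{DunfordSchwartz} exactly as you do. You are more explicit than the paper about the strong continuity needed for measurability, the $L^p$-upgrade via density of bounded functions, and the $S(t)$-invariance of the limit, all of which the paper leaves implicit in its citation; these details are handled correctly.
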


Consider now a compact Riemann surface lamination $(X,\Lc,E)$ with isolated singularities in $M$
and $T$ a positive harmonic current on the lamination. 
We have seen in 
Proposition \ref{prop_current_extension} that $T$ 
is also a positive $\ddbar$-closed current of bidimension $(1,1)$ on $M$. 
By Proposition \ref{prop_current_local_c} and Lemma
\ref{prop_poincare_measurable}, 
we can decompose $T$ into the sum of a positive harmonic current on
the union of the parabolic leaves and another on the union of the
hyperbolic leaves. 
We want to study the second part of $T$. From now on, assume for simplicity 
that $T$ has no mass on the union of the parabolic leaves. 

With the local decomposition of $T$, we have
$\partial T=\tau\wedge T$ with
$\tau=h_a^{-1}\partial h_a$ when $h_a\not =0$ and $\tau=0$ otherwise. On almost every leaves, when $\tau\not=0$, 
$i\tau\wedge\overline\tau$ defines a metric with curvature $-1$. It
follows that $i\tau\wedge\overline\tau$ is bounded by the Poincar{\'e}
metric $\omega_P$ on the leaves, see \cite{FornaessSibony3} for details. 
Therefore, all the above results can be applied for $T$ with 
$\beta:=\omega_P$ if $T\wedge\omega_P$ is a measure of finite mass, 
in particular, for laminations with linearizable singularities, e.g. generic
foliations in $\P^k$, 
see Proposition \ref{prop_poincare_mass} (they can be also applied to $i\tau\wedge\overline\tau$ and $\widetilde\Delta$ when $\tau\not=0$). 
In what follows, we will use the index $P$ instead of $\beta$,
e.g. $\Delta_P$ instead of $\Delta_\beta$.
Note that $T\wedge\omega_P$ is a harmonic measure on $X$ with respect to the Poincar\'e metric on the leaves.
We have the following result.

\begin{proposition} \label{prop_subharm_c_bis}
Let $(X,\Lc,E)$ be a compact Riemann surface lamination in a complex 
manifold $M$ with isolated singularities and $\omega_P$ the 
Poincar{\'e} metric on the leaves. Let $T$ be a positive harmonic 
current on the lamination without mass on the set of parabolic
leaves. 
Assume that $m_P:=T\wedge \omega_P$ is a measure of finite mass. Then  every  function  $u_0$ in 
$\Dom(\Delta_P)$ such that $\Delta_P u_0\geq  0$  
(resp.  $\widetilde \Delta_P u_0\geq  0$)  
is  constant on the leaf $L_a$ for $m_P$-almost every $a$. If moreover $T$ is an extremal positive 
harmonic current, then $u_0$ is constant $m_P$-almost everywhere.
\end{proposition}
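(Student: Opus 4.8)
The plan is to mirror the argument of Proposition \ref{prop_subharm} in the real case, using the machinery built for $\ddbar$-closed currents in Section \ref{section_heat_complex}. First I would note that by Proposition \ref{prop_poincare_mass} and the discussion following it, the hypothesis that $m_P:=T\wedge\omega_P$ has finite mass is exactly what is needed so that $\beta:=\omega_P$ is a strictly $T$-positive, $\beta$-regular form; here $\beta$-regularity comes from the fact that $i\tau\wedge\overline\tau$ defines (on a.e. leaf where $\tau\neq 0$) a metric of curvature $-1$, hence is dominated by $\omega_P$, as recalled in the paragraph before the proposition (see \cite{FornaessSibony3}). Thus all of Lemmas \ref{lemma_e_tilde_c}, \ref{lemma_e_e_tilde_c} and Proposition \ref{prop_delta_max_c} apply with $\beta=\omega_P$, and in particular $\Dom(\Delta_P)=\Dom(\widetilde\Delta_P)$ and $-\Delta_P$, $-\widetilde\Delta_P$ are maximal monotone on $L^2(m_P)$.

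Next, I would use the last identity in Lemma \ref{lemma_e_tilde_c}, extended to the domain (as the remark preceding that lemma allows), to get
$$\int (\Delta_P u_0)\, T\wedge\omega_P=\int(\widetilde\Delta_P u_0)\, T\wedge\omega_P=0.$$
Since $\Delta_P u_0\geq 0$ (resp. $\widetilde\Delta_P u_0\geq 0$) $m_P$-almost everywhere and its $m_P$-integral vanishes, we conclude $\Delta_P u_0=0$ (resp. $\widetilde\Delta_P u_0=0$) $m_P$-a.e. Then, applying Lemmas \ref{lemma_e_tilde_c} and \ref{lemma_e_e_tilde_c} with $u=v=u_0$,
$$i\int \partial u_0\wedge\dbar u_0\wedge T=\widetilde q(u_0,u_0)=q(u_0,u_0)=-\int(\Delta_P u_0)u_0\, T\wedge\omega_P=0.$$
Hence the positive measure $i\partial u_0\wedge\dbar u_0\wedge T$ is zero, which means the leafwise gradient of $u_0$ vanishes $T\wedge\omega_P$-almost everywhere. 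Using the local description of $T$ in a flow box (Proposition \ref{prop_current_local_c}) together with Lusin-type regularization as in Section \ref{section_heat_real}, this forces $u_0$ to be constant on $L_a$ for $m_P$-almost every $a$.

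Finally, for the extremal case: if $T$ is an extremal positive harmonic current, then by the local-structure and uniqueness results (Propositions \ref{prop_struc_current}, \ref{prop_struc_current_unique}, and the mutual-singularity statement in Proposition \ref{prop_choquet_c}), every measurable union of leaves has $T\wedge\omega_P$-measure either zero or full. Since $u_0$ is constant along $m_P$-almost every leaf, its level sets are (up to null sets) saturated measurable sets, so $u_0$ must be $m_P$-a.e. constant. The main obstacle I anticipate is the passage from ``$i\partial u_0\wedge\dbar u_0\wedge T=0$'' to ``$u_0$ constant on almost every leaf'': one must make precise, for $u_0$ merely in $\Dom(\Delta_P)$ rather than in $\Dc^0(M)$, that the leafwise distributional gradient is genuinely zero on $\mu$-almost every plaque; this is handled by disintegrating $T$ over a transversal as in Proposition \ref{prop_current_local_c} and invoking the elliptic regularity / Federer's version of Lusin's theorem already used to characterize $\Dom(\Delta)$ in the real setting, so that the vanishing of the $L^2$-norm of $\nabla u_0$ propagates to plaque-wise constancy and then, by connectedness of leaves and overlapping of plaques, to leafwise constancy.
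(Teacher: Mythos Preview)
Your proposal is correct and follows essentially the same route as the paper: invoke $\omega_P$-regularity of $T$, use Lemmas \ref{lemma_e_tilde_c} and \ref{lemma_e_e_tilde_c} to obtain $\int(\Delta_P u_0)\,T\wedge\omega_P=\int(\widetilde\Delta_P u_0)\,T\wedge\omega_P=0$ and hence $\int i\partial u_0\wedge\dbar u_0\wedge T=0$, then conclude as in Proposition \ref{prop_subharm}. The paper's proof is in fact terser than yours --- after writing down the two displayed identities it simply says ``we can repeat the proof of Proposition \ref{prop_subharm}'' --- so your additional discussion of the passage from vanishing leafwise gradient to leafwise constancy, and of the extremal case via saturated sets, spells out what the paper leaves implicit.
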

\proof
We have seen in the above discussion that $T$ is $\omega_P$-regular.
So, using Lemmas \ref{lemma_e_tilde_c} and \ref{lemma_e_e_tilde_c} we have the identities
$$\int \Delta_P u_0 T\wedge\omega_P =\int \widetilde\Delta_P u_0 T\wedge\omega_P=0$$
and
$$\int i\partial u_0\wedge\dbar u_0 \wedge T
=-\int(\widetilde\Delta_Pu_0)u_0 T\wedge\omega_P =-\int(\Delta_Pu_0)u_0 T\wedge\omega_P=0.$$
So, we can repeat the proof of Proposition \ref{prop_subharm}. 
\endproof

Note that since the Poincar{\'e} metric is not bounded in general, we
don't have a priori a one to one correspondence 
between harmonic currents and harmonic measures. However, we have the following lemma that can be easily obtained using local description of currents.

\begin{lemma} \label{lemma_corr_c}
Let $T$ be a positive harmonic current with finite Poincar{\'e}'s mass.
If $m$ is a positive harmonic measure such that $m\leq m_P:=T\wedge\omega_P$, then there is a positive harmonic current $S\leq T$ such that $m=S\wedge\omega_P$. In particular, if $T$ is extremal, then 
$m_P$ is an extremal positive harmonic measure.
\end{lemma}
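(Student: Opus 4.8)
The plan is to realize $S$ via a Radon--Nikodym factorization of $m$ against $m_P$ and to read off its harmonicity from the local structure theorems of Section~\ref{section_current}. Since $0\le m\le m_P$, there is a Borel function $\theta$ on $X$ (defined $m_P$-almost everywhere) with $0\le\theta\le1$ and $m=\theta\,m_P$; I set $S:=\theta\,T$. Because $T$ is a positive current of order $0$ it extends to a positive functional on bounded Borel $(1,1)$-forms, so $\langle S,\alpha\rangle:=\langle T,\theta\alpha\rangle$ defines a positive current of order $0$, and $T-S=(1-\theta)T\ge0$ gives $0\le S\le T$. Writing $\omega_P=4\vartheta\omega$ with $\vartheta$ a non-negative bounded Borel function (Proposition~\ref{prop_poincare_measurable}), for every test function $f$ one gets $\langle S\wedge\omega_P,f\rangle=\langle T,\theta f\,\omega_P\rangle=\langle T\wedge\omega_P,\theta f\rangle=\langle m,f\rangle$, so $S\wedge\omega_P=m$. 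Everything thus reduces to showing that $S$ is harmonic.

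For this I would work in a flow box $\U\simeq\B\times\T$ relatively compact in $X\setminus E$. After restricting the transversal measure so that the leafwise densities $h_a$ of $T$ are not identically $0$ (hence strictly positive on the plaques, being positive harmonic), Proposition~\ref{prop_struc_current} writes $T|_\U=\int_\T h_a\,[\B\times\{a\}]\,d\mu(a)$, hence $S|_\U=\int_\T(\theta_a h_a)\,[\B\times\{a\}]\,d\mu(a)$, where $\theta_a$ is the restriction of $\theta$ to $\B\times\{a\}$, Borel for $\mu$-almost every $a$. Consequently $m=\theta\,m_P$ disintegrates in $\U$ as $\int_\T(\theta_a h_a)\,(\omega_P|_{\B\times\{a\}})\,d\mu(a)$, so $\theta_a h_a$ is the leafwise density of $m$ with respect to $\omega_P$ on the plaques; since $m$ is a harmonic measure, Weyl's lemma (as in the proofs of Propositions~\ref{prop_choquet} and~\ref{prop_current_local_c}) together with the almost-uniqueness of such decompositions (Proposition~\ref{prop_struc_current_unique}) forces $\theta_a h_a$ to be harmonic for $\mu$-almost every $a$. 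Thus $\ddc S=0$ on $X\setminus E$; as $S\le T$, it has locally finite mass near the finite set $E$, and Lemma~\ref{lemma_ds} (with Proposition~\ref{prop_current_extension}) upgrades $S$ to a positive harmonic current on $X$. This proves the first assertion.

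For the final assertion, let $T$ be extremal and $m_P=m_1+m_2$ with $m_1,m_2$ positive harmonic measures. Applying the first part, $m_i=\theta_i\,m_P$ with $0\le\theta_i\le1$ Borel and $S_i:=\theta_i T$ a positive harmonic current $\le T$ with $S_i\wedge\omega_P=m_i$. From $m_1+m_2=m_P$ we get $(\theta_1+\theta_2)\,m_P=m_P$, so $\theta_1+\theta_2=1$ holds $m_P$-almost everywhere; since $T$ has no mass on parabolic leaves, $\vartheta>0$ holds $T\wedge\omega$-almost everywhere, hence $T\wedge\omega$ and $T\wedge\omega_P=4\vartheta\,T\wedge\omega$ are mutually absolutely continuous, and therefore $S_1+S_2=(\theta_1+\theta_2)T=T$. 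Extremality of $T$ then gives $S_i=c_iT$ with $c_i\ge0$, whence $m_i=c_i\,m_P$; so $m_P$ is an extremal positive harmonic measure.

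The only step with real content --- and the main obstacle --- is the harmonicity of $S$, i.e.\ the passage from ``$\theta\,m_P$ is a harmonic measure'' to ``$\theta_a h_a$ is harmonic on $\mu$-almost every plaque''; the rest is bookkeeping with positive currents of order $0$ and their disintegrations over the transversal. Inside that step the delicate points are measurability --- that $\theta$ restricts to a Borel function on $\mu$-almost every plaque, which is a Fubini statement for the disintegration of $m_P$ --- and the legitimacy of comparing the two disintegrations of $m$, which is exactly Proposition~\ref{prop_struc_current_unique}.
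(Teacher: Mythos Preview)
Your proof is correct and is precisely the argument the paper has in mind: the paper does not write out a proof at all, saying only that the lemma ``can be easily obtained using local description of currents,'' which is exactly the disintegration-and-Weyl's-lemma argument you carry out. Your treatment of the extremality clause is also the intended one, and your remark that the mutual absolute continuity of $T\wedge\omega$ and $T\wedge\omega_P$ uses the standing assumption (made just before the lemma) that $T$ has no mass on parabolic leaves is well placed.
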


This allows us to prove the following result as in Corollary \ref{cor_ergodic}. 

\begin{corollary} \label{cor_ergodic_c_bis}
Under the hypothesis of Proposition \ref{prop_subharm_c_bis} 
and with the notation of Corollary \ref{cor_ergodic_c},
$u_0^*$ is constant on the leaf $L_a$ for $m_P$-almost every $a$. Moreover, if 
$T$ is an extremal positive harmonic current, then $u_0^*$ is constant $m_P$-almost everywhere.
\end{corollary}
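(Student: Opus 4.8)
The plan is to mimic the proof of Corollary \ref{cor_ergodic} from the real case, replacing Proposition \ref{prop_subharm} by Proposition \ref{prop_subharm_c_bis}. Throughout, write $A$ for $-\Delta_P$ or $-\widetilde\Delta_P$, according to which semi-group $S(t)$ is under consideration, and recall that by Proposition \ref{prop_delta_max_c} the operator $A$ is maximal monotone on $L^2(m_P)$ and has closed graph, where $m_P=T\wedge\omega_P$; recall also that $\Dom(\Delta_P)=\Dom(\widetilde\Delta_P)$ and that $T$ is $\omega_P$-regular by the discussion preceding Proposition \ref{prop_subharm_c_bis}. First I would reduce to the case $u_0\in\Dc^0(M)$. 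Since $S(t)$, and hence the ergodic averages, are contractions in $L^p(m_P)$ and $\Dc^0(M)$ is dense in $L^p(m_P)$ (the measure $m_P$ being finite), the general statement follows: if $u_0^{(n)}\to u_0$ in $L^p(m_P)$ with $u_0^{(n)}\in\Dc^0(M)$, then $(u_0^{(n)})^*\to u_0^*$ in $L^p(m_P)$ by the contraction property, and the property of being constant on $m_P$-almost every leaf passes to the limit after extracting an $m_P$-a.e. convergent subsequence.

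So fix $u_0\in\Dc^0(M)$, which lies in $L^2(m_P)$, and set $u_R:=\frac1R\int_0^R S(t)u_0\,dt$. By the standard properties of semi-groups of contractions generated by a maximal monotone operator, $u_R$ belongs to $\Dom(A)$ and $A u_R=\frac1R\big(S(R)u_0-u_0\big)$; equivalently, using the Hille--Yosida theorem, $Au_R=-\frac1R\int_0^R\partial_t u(t,\cdot)\,dt=\frac1R u_0-\frac1R u(R,\cdot)$. Since $S(R)$ is a contraction in $L^2(m_P)$ and $\|u_0\|_{L^2(m_P)}<\infty$, this shows $Au_R\to 0$ in $L^2(m_P)$ as $R\to\infty$. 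On the other hand $u_R\to u_0^*$ in $L^2(m_P)$ by Corollary \ref{cor_ergodic_c}. Since the graph of $A$ is closed in $L^2(m_P)\times L^2(m_P)$, we conclude $u_0^*\in\Dom(\Delta_P)$ and $\Delta_P u_0^*=\widetilde\Delta_P u_0^*=0$.

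Now Proposition \ref{prop_subharm_c_bis} applies with the (trivially satisfied) inequality $\Delta_P u_0^*\ge 0$ and yields that $u_0^*$ is constant on $L_a$ for $m_P$-almost every $a$. If moreover $T$ is extremal, the same proposition --- or Lemma \ref{lemma_corr_c}, which guarantees that $m_P$ is an extremal positive harmonic measure, so that every measurable union of leaves is $m_P$-null or $m_P$-conull --- gives that $u_0^*$ is constant $m_P$-almost everywhere.

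The only point requiring genuine care is the reduction to $u_0\in\Dc^0(M)$ together with the verification that $u_R\in\Dom(A)$ and the formula for $Au_R$; the remaining complex-analytic input (the $\omega_P$-regularity of $T$, i.e. $i\tau\wedge\overline\tau\wedge T\le\omega_P\wedge T$) has already been absorbed into Propositions \ref{prop_delta_max_c} and \ref{prop_subharm_c_bis}, so the rest is a direct transcription of the real-variable argument.
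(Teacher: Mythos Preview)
Your proposal is correct and follows essentially the same route as the paper, which simply says the result is proved ``as in Corollary \ref{cor_ergodic}'' after noting Lemma \ref{lemma_corr_c}; you have faithfully transcribed that argument, replacing Proposition \ref{prop_subharm} by Proposition \ref{prop_subharm_c_bis}. One cosmetic slip: your two displayed formulas for $Au_R$ differ by a sign (the correct one is $Au_R=\frac1R\big(u_0-S(R)u_0\big)$, matching your second expression), but this is immaterial since only $\|Au_R\|_{L^2}\to 0$ is used.
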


The following result gives us the mixing for the operator $-\widetilde\Delta_P$. We can also obtain a pointwise
convergence using E.M. Stein's theorem.

\begin{theorem} 
Under the hypothesis of Proposition \ref{prop_subharm_c_bis}, if $S(t)$ is associated 
with $-\widetilde\Delta_P$ and if $T$ is extremal,  
$S(t)u_0\to \langle m_P,u_0\rangle$ in $L^p(m)$ when $t\to \infty$
for every $u_0\in L^p(m_P)$ with $1\leq p<\infty$.
\end{theorem}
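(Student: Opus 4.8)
This is the pluripotential counterpart of Theorem~\ref{th_mixing}, and I would run through the same four stages, checking that every ingredient used in the real case has an analogue already available here. Throughout, write $m_P:=T\wedge\omega_P$ and recall from the discussion preceding Proposition~\ref{prop_subharm_c_bis} that $T$ is $\omega_P$-regular (since $i\tau\wedge\overline\tau\wedge T\le\omega_P\wedge T$), so that Lemmas~\ref{lemma_e_tilde_c} and~\ref{lemma_e_e_tilde_c}, Proposition~\ref{prop_delta_max_c} and Theorem~\ref{th_heat_c} all apply with $\beta=\omega_P$, with $\Dom(\Delta_P)=\Dom(\widetilde\Delta_P)$, and $-\widetilde\Delta_P$ self-adjoint (it is symmetric by Lemma~\ref{lemma_e_tilde_c} and maximal monotone by Proposition~\ref{prop_delta_max_c}). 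First I would reduce to a convenient dense class: since $S(t)$ is an $L^p(m_P)$-contraction for every $1\le p\le\infty$ and fixes constants, and since $\langle m_P,S(t)u_0\rangle=\langle m_P,u_0\rangle$ by invariance, it suffices to treat $u_0\in\Dc^0(M)\cap L^2(m_P)$ with $\langle m_P,u_0\rangle=0$ and to prove $S(t)u_0\to0$ in $L^2(m_P)$; the general $L^p$ statement then follows by the same $L^\infty$-to-$L^p$ interpolation and density argument used at the end of the proof of Theorem~\ref{th_heat_real}.

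Next I would establish a uniform energy bound. Put $u(t,\cdot):=S(t)u_0$. Since $-\widetilde\Delta_P$ is self-adjoint, the last estimate of the Hille--Yosida theorem gives $\|\widetilde\Delta_P u(t,\cdot)\|_{L^2(m_P)}\le t^{-1}\|u_0\|_{L^2(m_P)}$, and $S(t)$ is an $L^2(m_P)$-contraction, so by Lemma~\ref{lemma_e_tilde_c} and Cauchy--Schwarz
$$i\int\partial u\wedge\dbar u\wedge T=\widetilde q(u,u)=-\int(\widetilde\Delta_P u)\,u\,T\wedge\omega_P\le\frac{1}{t}\,\|u_0\|_{L^2(m_P)}^2 .$$
Hence $\|u(t,\cdot)\|_{H^1_P}^2\le(1+t^{-1})\|u_0\|_{L^2(m_P)}^2$ stays bounded for $t\ge1$. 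Then I would identify the weak limits: along any sequence $t_n\to\infty$ for which $u(t_n,\cdot)$ converges weakly in $H^1_P(T)$ to some $u_\infty$, weak lower semicontinuity of the Hilbert seminorm $u\mapsto\bigl(i\int\partial u\wedge\dbar u\wedge T\bigr)^{1/2}$ together with the bound above force $i\int\partial u_\infty\wedge\dbar u_\infty\wedge T=0$; arguing as in Proposition~\ref{prop_subharm_c_bis} via the local description of $T$ (Proposition~\ref{prop_current_local_c}), $u_\infty$ is then constant on $L_a$ for $m_P$-almost every $a$. Since $T$ is extremal, $m_P$ is an extremal positive harmonic measure by Lemma~\ref{lemma_corr_c}, so every saturated measurable set is $m_P$-null or $m_P$-conull and $u_\infty$ is constant $m_P$-almost everywhere; moreover $\langle m_P,u_\infty\rangle=\lim_n\langle m_P,S(t_n)u_0\rangle=\langle m_P,u_0\rangle=0$, whence $u_\infty=0$. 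As $\{u(t,\cdot)\}_{t\ge1}$ is bounded in the Hilbert space $L^2(m_P)$, this shows $S(t)u_0\rightharpoonup0$ weakly in $L^2(m_P)$.

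Finally I would upgrade weak convergence to strong convergence using self-adjointness. Because $-\widetilde\Delta_P$ is self-adjoint, $S(t)$ is a self-adjoint operator on $L^2(m_P)$ (it is the limit of the self-adjoint resolvent powers $\bigl(I-tn^{-1}\widetilde\Delta_P\bigr)^{-n}$), so
$$\|S(t)u_0\|_{L^2(m_P)}^2=\langle S(2t)u_0,u_0\rangle_{L^2(m_P)}\longrightarrow0$$
by the weak convergence just obtained. Hence $S(t)u_0\to0=\langle m_P,u_0\rangle$ in $L^2(m_P)$, and the reduction of the first step yields $S(t)u_0\to\langle m_P,u_0\rangle$ in $L^p(m_P)$ for all $1\le p<\infty$; E.~M.~Stein's theorem on self-adjoint contractions of $L^p$ moreover gives $m_P$-almost everywhere convergence when $u_0\in L^1(m_P)$. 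The only place where there is real work beyond transcribing Section~\ref{section_heat_real} is the package of structural facts that make $\beta=\omega_P$ an admissible ``metric'' on $T$: finiteness of the Poincar{\'e} mass (Proposition~\ref{prop_poincare_mass} in the linearizable case, or the standing hypothesis here), $\omega_P$-regularity of $T$, self-adjointness of $-\widetilde\Delta_P$, and the transfer of extremality from $T$ to $m_P$ via Lemma~\ref{lemma_corr_c}; once these are in hand, the functional-analytic skeleton is exactly the one of Theorem~\ref{th_mixing}.
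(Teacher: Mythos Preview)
Your proposal is correct and follows essentially the same approach as the paper, which simply says to repeat the proof of Theorem~\ref{th_mixing} in the complex setting, noting only the identity $2i\int\partial u\wedge\dbar u\wedge T=\|\nabla u\|_{L^2(T\wedge\omega)}^2$ so that the flow-box argument ``$u_\infty$ is constant on almost every leaf'' transfers verbatim. You have spelled out the four stages (reduction, energy bound via the self-adjoint Hille--Yosida estimate, identification of weak limits using extremality, and the self-adjoint upgrade $\|S(t)u_0\|^2=\langle S(2t)u_0,u_0\rangle$) in more detail than the paper, and correctly flagged the structural inputs specific to $\beta=\omega_P$ (regularity of $T$, self-adjointness of $-\widetilde\Delta_P$, and the transfer of extremality from $T$ to $m_P$ via Lemma~\ref{lemma_corr_c}).
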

\proof
We follow closely the proof of Theorem \ref{th_mixing} using similar notation. In this case, we use flow boxes away from the singularities of the lamination.
We only need to 
notice that 
$$2i\int \partial u\wedge\dbar u\wedge T=\|\nabla u\|_{L^2(T\wedge\omega)}^2$$
where $\nabla$ is the gradient along the leaves with respect to the metric $\omega$.
\endproof

As in the real case, we have the following result.

\begin{proposition} \label{prop_dense_laplace_c}
Let $T=\int T'd\nu(T')$ be  as in Propositions \ref{prop_subharm_c_bis} and \ref{prop_choquet_c}. 
Then the closures of $\Delta_P(\Dc^0(X))$ and of $\widetilde\Delta_P(\Dc^0(X))$ in $L^p(m_P)$, 
$1\leq p\leq 2$, is the space of functions 
$u_0\in L^p(m_P)$ such that $\int u_0 T'\wedge\omega_P =0$ for $\nu$-almost every $T'$.  
In particular, if $T$ is an 
extremal positive harmonic current of finite Poincar{\'e}'s mass, then this space is the 
hyperplane of $L^p(m_P)$ defined by the equation $\int u_0 T\wedge\omega_P=0$.
\end{proposition}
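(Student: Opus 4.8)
The plan is to mirror the proof of Proposition \ref{prop_dense_laplace}, replacing the heat semi-group machinery of Section \ref{section_heat_real} by that of Section \ref{section_heat_complex}. I will only treat $\Delta_P$, since $\widetilde\Delta_P$ is handled in the same way using its own semi-group. First I would record that, by the discussion before Proposition \ref{prop_subharm_c_bis}, the current $T$ is $\omega_P$-regular, so Proposition \ref{prop_delta_max_c} and Theorem \ref{th_heat_c} apply with $\beta=\omega_P$: there is a positive, $m_P$-preserving contraction semi-group $S(t)$ on $L^2(m_P)$ generated by $-\Delta_P$, which extends to a positive contraction on every $L^r(m_P)$. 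Write $Z_\nu$ for the space of $u_0\in L^p(m_P)$ with $\int u_0\, T'\wedge\omega_P=0$ for $\nu$-almost every $T'$. The inclusion $\Delta_P(\Dc^0(X))\subset Z_\nu$ is immediate from the $\ddbar$-closedness of the $T'$ (componentwise, since $(\Delta_P u)\,T'\wedge\omega_P=i\ddbar u\wedge T'$ for $\nu$-a.e.\ $T'$ by disintegration of $(\Delta_P u)m_P$), and $Z_\nu$ is a closed subspace of $L^p(m_P)$. By Hahn--Banach duality it therefore remains to show that every $\theta\in L^q(m_P)$, $1/p+1/q=1$, annihilating $\Delta_P(\Dc^0(X))$ also annihilates $Z_\nu$; since $p\le 2$ and $m_P$ is finite, such a $\theta$ lies in $L^2(m_P)$.

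Next I would reinterpret this annihilation condition: $\int(\Delta_P u)\theta\, T\wedge\omega_P=0$ for all $u\in\Dc^0(X)$ says exactly that $\int\theta\, i\ddbar u\wedge T=0$, i.e.\ the signed current $\theta T$ is $\ddbar$-closed, equivalently $\theta m_P$ is a harmonic measure for $\Delta_P$. The key step is then a complex analogue of Lemma \ref{lemma_harm_pos}: arguing with $S(t)$, for $u_0\in\Dc^0(X)$ the function $\eta(t):=\int\theta\, S(t)u_0\, dm_P$ is of class $\Cc^1$ with $\eta'(t)=-\int\theta\,\Delta_P(S(t)u_0)\, dm_P=0$ (the identity $\int\theta\,\Delta_P v\, dm_P=0$ passing from $\Dc^0(X)$ to $v\in\Dom(\Delta_P)$ by density and the local description of $\theta T$ in flow boxes), so $S(t)(\theta m_P)=\theta m_P$; positivity of $S(t)$ on $L^1(m_P)$ then forces $S(t)(\theta^\pm m_P)=\theta^\pm m_P$, whence $\theta^\pm T$ are positive $\ddbar$-closed currents on $M$. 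Applying the same to $(\theta-c)m_P$ for every $c\in\R$, the truncations $(\theta-c)^\pm T$, and hence $\ind_{\{c_1<\theta\le c_2\}}T\le T$ for all real $c_1<c_2$, are positive $\ddbar$-closed currents, that is, positive harmonic currents on $X$.

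I would then disintegrate $m_P=\int m_{P,c}\, d\nu'(c)$ along the fibers of $\theta$, with $m_{P,c}$ a probability measure carried by $\{\theta=c\}$. Since $\ind_{\{c_1<\theta\le c_2\}}T$ is $\ddbar$-closed for almost all $c_1<c_2$ and $\ddbar$ is continuous on currents, a routine measure-theoretic argument (as in the proof of Proposition \ref{prop_dense_laplace}) shows $m_{P,c}$ is a harmonic measure for $\nu'$-almost every $c$; by Lemma \ref{lemma_corr_c} one writes $m_{P,c}=S_c\wedge\omega_P$ with $S_c$ a positive harmonic current, which Proposition \ref{prop_choquet_c} decomposes into extremals as $S_c=\int T'\, d\nu_c(T')$. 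Substituting into $T=\int S_c\, d\nu'(c)$ and comparing with the hypothesis $T=\int T'\, d\nu(T')$, the uniqueness of the Choquet decomposition in Proposition \ref{prop_choquet_c} gives $\nu=\int\nu_c\, d\nu'(c)$, up to the obvious normalizing constants. As $\theta$ equals $c$ at $m_{P,c}$-almost every point, it is constant $(T'\wedge\omega_P)$-almost everywhere for $\nu_c$-almost every $T'$, hence for $\nu$-almost every $T'$; therefore $\int u_0\theta\, dm_P=\int\bigl(\int u_0\theta\, T'\wedge\omega_P\bigr)d\nu(T')=0$ for every $u_0\in Z_\nu$, which proves $\overline{\Delta_P(\Dc^0(X))}=Z_\nu$ in $L^p(m_P)$. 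For extremal $T$ the measure $\nu$ is a Dirac mass and $Z_\nu$ is the hyperplane $\{\int u_0\, T\wedge\omega_P=0\}$, giving the last assertion; the case of $\widetilde\Delta_P$ is obtained verbatim, replacing $-\Delta_P$ by $-\widetilde\Delta_P$ (whose semi-group exists without any regularity assumption) and using $\int(\widetilde\Delta_P u)\,T\wedge\omega_P=0$ from Lemma \ref{lemma_e_tilde_c}.

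The main obstacle I anticipate is establishing the complex counterpart of Lemma \ref{lemma_harm_pos}, namely that $\ddbar$-closedness of the signed current $\theta T$ is inherited by its positive and negative parts and by its level-set truncations. This is precisely where the heat semi-group of $-\Delta_P$, and hence the $\omega_P$-regularity of $T$, is indispensable: unlike the real setting there is no continuity of the Poincar\'e metric to fall back on. A secondary, more bookkeeping, difficulty is reconciling the two disintegrations of $T$ through the uniqueness of Choquet decompositions, while keeping track of normalizations since $m_P$ need not be a probability measure.
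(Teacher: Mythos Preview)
Your proposal is correct and follows essentially the same approach as the paper: the paper's proof simply says to mirror Proposition \ref{prop_dense_laplace}, show that $\theta m_P$ restricted to $\{c\leq\theta\leq c'\}$ is harmonic, invoke Lemma \ref{lemma_corr_c} to pass to a harmonic current, disintegrate $T$ along the fibers of $\theta$, and conclude by the uniqueness in Proposition \ref{prop_choquet_c}. You have spelled out exactly these steps, including the complex analogue of Lemma \ref{lemma_harm_pos} via the semi-group $S(t)$, which the paper leaves implicit under ``follows the lines as in Proposition \ref{prop_dense_laplace}''.
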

\proof
The proof follows the lines as in Proposition \ref{prop_dense_laplace}. With the analogous notation, we obtain that $\theta m$ restricted to $\{c\leq\theta\leq c'\}$ is harmonic for all $c,c'$. By Lemma \ref{lemma_corr_c}, this measure is associated to a harmonic current. This allows us to disintegrate $T$ along the fibers of $\theta$ and to conclude using Proposition \ref{prop_choquet_c}. 
\endproof

\begin{remark}\rm
For all the above results, when the lamination has no singular points, 
the hypothesis that it is embedded in a complex manifold is
unnecessary. 
In general, it is enough to assume that locally near singular points,
the 
lamination can be embedded in a complex manifold.
\end{remark}

\section{Birkhoff's type  theorem for laminations} \label{section_birkhoff}

In this section, we will give an analogue  of  
Birkhoff's ergodic theorem in the  context of a compact lamination $(X,\Lc,E)$
with isolated singularities on a complex manifold $M$. In the case where the
set of singularities is empty, the property that the lamination is
embedded in a complex manifold is unnecessary. Let $T$ be a positive
harmonic current on $(X,\Lc,E)$. We have seen that $T$ is also a
positive $\ddbar$-closed current of bidimension $(1,1)$ on $M$. We
assume that $T$ has no mass on the union of parabolic leaves and that
$m_P:=T\wedge\omega_P$ is a probability measure where $\omega_P$ denotes
the Poincar{\'e} metric on the leaves. So, $m_P$ is a harmonic measure on $X$ with respect to $\omega_P$.

For any point $a\in X\setminus E$ such that the corresponding leaf
$L_a$ is hyperbolic, consider a universal covering map
$\phi_a:\D\rightarrow L_a$ such that $\phi_a(0)=a$. This map is
uniquely defined by $a$ up to a rotation on $\D$. Denote by
$r\D$ the disc of center 0 and of radius $r$ with $0<r<1$. In the
Poincar{\'e} metric, this is also the disc of center 0 and of radius 
$$R:=\log{1+r\over 1-r}\cdot$$
So, we will also denote by $\D_R$ this disc.
For all $0<R<\infty$, consider 
$$m_{a,R}:=\frac{1}{M_R}(\phi_a)_* \big(\log^+ \frac{r}{|\zeta|}\omega_P\big).$$
where $\log^+:=\max\{\log,0\}$, $\omega_P$ denotes also the Poincar{\'e} metric on $\D$ and 
$$M_R:= \int \log^+ \frac{r}{|\zeta|}\omega_P=\int \log^+ \frac{r}{|\zeta|} \frac{2}{(1-|\zeta|^2)^2}
id\zeta\wedge d\overline\zeta.$$
So, $m_{a,R}$ is a probability measure  which depends on $a,R$ but 
does not depend on the choice of $\phi_a$. 
Here is the main theorem in this section.

\begin{theorem}\label{th_birkhoff}
Let $(X,\Lc,E)$ be a compact lamination with isolated singularities in a complex manifold $M$ and
$\omega_P$ the Poincar{\'e} metric on the leaves. 
Let $T$ be an extremal positive harmonic current of Poincar{\'e} mass
$1$ on $(X,\Lc,E)$ without mass on the union of parabolic leaves.  
Then for almost every point $a\in X$ with respect to the measure
$m_P:=T\wedge\omega_P$, the measure $m_{a,R}$ defined above converges to $m_P$
when $R\to\infty$.  
\end{theorem}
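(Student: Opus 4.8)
The plan starts with the observation that, since $T$ is extremal, Lemma~\ref{lemma_corr_c} makes $m_P$ an extremal positive harmonic measure, so every leafwise-constant function in $L^2(m_P)$ is $m_P$-a.e.\ constant (Proposition~\ref{prop_subharm_c_bis}). Write $B_Ru(a):=\langle m_{a,R},u\rangle$; this is a positive operator with $B_R\mathbf 1=\mathbf 1$. I would first check that $m_P$ is $B_R$-invariant, i.e.\ $\int_X B_Ru\,dm_P=\langle m_P,u\rangle$ for $u\in\Dc^0(X)$; this is the analogue of the $S(t)$-invariance in Theorem~\ref{th_heat_c}, and should follow from the local structure of $T$ (Proposition~\ref{prop_current_local_c}), the harmonicity of $T$, and the symmetry of the leafwise Green function of a hyperbolic disc --- which by conformal invariance is, up to a constant, $\log^+\frac r{\rho_{\mathrm{ph}}}$ in the pseudo-hyperbolic distance --- together with a periodization over the deck group of each leaf. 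Combined with Jensen's inequality $|B_Ru|^2\le B_R(|u|^2)$, invariance gives that $B_R$ is a contraction of $L^p(m_P)$ for all $1\le p\le\infty$. Hence it is enough to prove $B_Ru(a)\to\langle m_P,u\rangle$ for $m_P$-a.e.\ $a$ for $u$ in a fixed countable family of continuous functions dense in $L^2(m_P)$; intersecting the conull sets and using that the $m_{a,R}$ are probability measures on the compact space $X$ then yields $m_{a,R}\to m_P$ weakly-$*$. Subtracting $\langle m_P,u\rangle\mathbf 1$, I must prove $B_Ru\to0$ $m_P$-a.e.

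\textbf{A Green--Jensen identity and norm convergence.} For $\varphi\in\Dc^0(X)$ and $a$ with $L_a$ hyperbolic, pull $\varphi$ back by the covering $\phi_a:\D\to L_a$. Since $\phi_a$ is a local isometry for the Poincar\'e metrics and, on the leaf, $\Delta_P\psi\cdot\omega_P=i\ddbar\psi$ (the definition of $\Delta_P=\Delta_\beta$ with $\beta=\omega_P$, the transverse density of $T$ cancelling), one gets $(\Delta_P\varphi)\circ\phi_a\cdot\omega_P=i\ddbar(\varphi\circ\phi_a)$ on $\D$. Using $i\ddbar\log^+\frac r{|\zeta|}=\pi(\sigma_r-\delta_0)$, with $\sigma_r$ the uniform probability on $\{|\zeta|=r\}$, and $\int\log^+\frac r{|\zeta|}\omega_P=M_R$, an integration by parts gives
$$B_R(\Delta_P\varphi)(a)=\frac{\pi}{M_R}\big(\langle\sigma_r,\varphi\circ\phi_a\rangle-\varphi(a)\big),\qquad\text{so}\qquad \|B_R(\Delta_P\varphi)\|_\infty\le\frac{2\pi\|\varphi\|_\infty}{M_R}\xrightarrow[R\to\infty]{}0,$$
since $M_R\to\infty$. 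By Proposition~\ref{prop_dense_laplace_c} (using that $T$ is $\omega_P$-regular and extremal), $\Delta_P(\Dc^0(X))$ is dense in $\{u_0\in L^2(m_P):\langle m_P,u_0\rangle=0\}$. For such a $u$, pick $\varphi_n$ with $\Delta_P\varphi_n\to u$ in $L^2(m_P)$; the $L^2(m_P)$-contraction property of $B_R$ gives $\|B_Ru\|_{L^2(m_P)}\le\|u-\Delta_P\varphi_n\|_{L^2(m_P)}+\|B_R(\Delta_P\varphi_n)\|_\infty$, so $B_Ru\to0$ in $L^2(m_P)$, and by interpolation in $L^p(m_P)$, $1\le p<\infty$.

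\textbf{Pointwise convergence --- the main obstacle.} To upgrade to $m_P$-a.e.\ convergence I would use the Banach--Calder\'on principle, which requires a maximal inequality. By a co-area computation, with $|\D_\rho|$ the Poincar\'e area of $\D_\rho$ and $w(\rho):=|\D_\rho|/\sinh\rho>0$,
$$B_Ru(a)=\Big(\int_0^R w(\rho)\,d\rho\Big)^{-1}\int_0^R w(\rho)\,A_\rho u(a)\,d\rho,\qquad A_\rho u(a):=\frac1{|\D_\rho|}\big\langle(\phi_a)_*(\mathbf 1_{\D_\rho}\omega_P),u\big\rangle,$$
so $\sup_{R>0}|B_Ru|\le\sup_{\rho>0}|A_\rho u|=:A^{*}u$, the leafwise ball-average maximal function. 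The heart of the theorem is then the weak-type bound $\|A^{*}u\|_{L^{1,\infty}(m_P)}\lesssim\|u\|_{L^1(m_P)}$ (a strong $L^2(m_P)$ bound would also do). My plan for this is to compare the ball averages with the heat semigroup $S(t)$ generated by $-\widetilde\Delta_P$: the leafwise Green function is the time-integral of the leafwise Dirichlet heat kernel, so $M_RB_R$ is a time-integral of Dirichlet-heat-smoothed data on the disc $L_{a,R}=\phi_a(\D_R)$, and as $R\to\infty$ one replaces this Dirichlet semigroup by $S(t)$ and $M_RB_Ru$ by the Ces\`aro average $\int_0^{cM_R}S(t)u\,dt$, the errors being controlled by the maximal function of the symmetric diffusion semigroup $S(t)$ (E.~M.~Stein's maximal theorem) and by the Hopf--Dunford--Schwartz maximal ergodic theorem. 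This then reduces the a.e.\ statement to Corollary~\ref{cor_ergodic_c_bis}. The step I expect to be genuinely hard is making this comparison rigorous despite the fact that the disc $L_{a,R}$ moves with the base point $a$ (leaves need not be simply connected, so these are not honest metric balls), which forces one to carry the deck-group symmetry of the first paragraph through the estimate and to use carefully the mere measurability of $a\mapsto\phi_a$ (via the Lusin-type statements of Section~\ref{section_riemann} and Proposition~\ref{prop_current_local_c}).

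\textbf{Conclusion.} Granting the maximal inequality, the Banach--Calder\'on principle applied to $(B_R)$ on $L^2(m_P)$, together with the uniform (hence a.e.) convergence $B_R(\Delta_P\varphi)\to0$ on the dense subspace $\Delta_P(\Dc^0(X))\oplus\R\mathbf 1$, gives $B_Ru(a)\to\langle m_P,u\rangle$ for $m_P$-a.e.\ $a$, for every $u\in L^2(m_P)$ (and, using the $L^1$-maximal bound, for every $u\in L^1(m_P)$). Applying this to a countable dense subfamily of $C^0(X)$ and intersecting the conull sets produces a single $m_P$-conull set on which $\langle m_{a,R},u\rangle\to\langle m_P,u\rangle$ for all such $u$; since the $m_{a,R}$ are probability measures on the compact space $X$, this is exactly the convergence $m_{a,R}\to m_P$ for $m_P$-a.e.\ $a$.
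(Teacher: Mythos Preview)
Your overall architecture is exactly the paper's: establish $m_P$-invariance of $B_R$ via the deck-group symmetry (Proposition~\ref{prop_B_L1}), get $L^p$ convergence from the density of $\Delta_P(\Dc^0)$ and the uniform estimate $B_R(\Delta_P\varphi)\to0$ (Theorem~\ref{th_ergodic_Lp}), then upgrade to a.e.\ convergence by a weak-type maximal inequality for $\widetilde Bu:=\limsup_R|B_Ru|$ combined with the Banach principle. Your Green--Jensen computation is the paper's $\langle i\ddbar T_{a,R},v\rangle$ step verbatim.

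Where you diverge is in the maximal inequality, and here your diagnosis of the difficulty is slightly off. You worry about the moving base point and the deck group, but the paper sidesteps this entirely by working on the \emph{model disc}: it defines $S_tu(a):=\int_\D p(0,\cdot,t)(u\circ\phi_a)\,\omega_P$ using the explicit Poincar\'e heat kernel $p$ on $\D$ (not the abstract Hille--Yosida semigroup of $-\widetilde\Delta_P$). Because $p(0,\cdot,t)$ is radial, the very same deck-group argument you used for $B_R$ shows $S_t$ is a contraction semigroup on $L^1(m_P)$ and $L^\infty(m_P)$, and then Dunford--Schwartz gives the maximal inequality for its Ces\`aro means $\widetilde S$ immediately (Lemma~\ref{lemma_heat_delta}). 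The genuine technical core (Lemma~\ref{lemma_limsup}) is a purely one-variable estimate on $\D$: using the identity $\frac{1}{2\pi}\log\frac{1}{|\zeta|}=\int_0^\infty p(0,\zeta,t)\,dt$ and Chavel's explicit formula for $p$, the paper proves
\[
\Big|B_R\widehat u(0)-\frac{2\pi}{M_R}\int_0^{M_R/2\pi} S_t\widehat u(0)\,dt\Big|\ \lesssim\ R^{-1/2}\sqrt{\log R}
\]
uniformly for $0\le\widehat u\le1$, whence $\widetilde Bu=\widetilde Su$ for bounded $u$ and the weak-type bound for $\widetilde B$ follows. So the hard step is not measurable dependence on $a$ or non-simply-connected leaves, but a quantitative comparison of the hyperbolic Green kernel with the time-integrated heat kernel on $\D$; your proposed route through the ball-average maximal function $A^*$ and the $-\widetilde\Delta_P$ semigroup would work in principle but is less direct and does not obviously avoid this same estimate.
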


For  every  $0<R<\infty$, we introduce  the operator $B_R$ by 
$$B_Ru(a):=\frac{1}{M_R}\int_{|\zeta|<1} \log^+
\frac{r}{|\zeta|}(\phi_a)^* (u\omega_P)=\langle
m_{a,R},u\rangle.$$
Note that for $u\in L^1(m_P)$, the function $B_Ru$ is defined
$m_P$-almost everywhere. So, the convergence in 
Theorem \ref{th_birkhoff} is equivalent to the convergence
$B_Ru(a)\rightarrow \langle m_P,u\rangle$ for $u$ continuous and for $m_P$-almost every $a$.

\begin{proposition}\label{prop_B_L1} 
Under the hypothesis of Theorem \ref{th_birkhoff}, 
for  every $u\in L^1(m_P)$, we have 
$$\int (B_R u) T\wedge\omega_P =\int u T\wedge\omega_P.$$
In particular, $B_R$ is positive and of norm $1$ in $L^p(m_P)$ for all
$1\leq p\leq \infty$.  
\end{proposition}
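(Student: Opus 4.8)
The plan is to reduce the identity $\int (B_R u)\, T\wedge\omega_P = \int u\, T\wedge\omega_P$ to a local computation in a flow box, using the disintegration of $T$ from Proposition \ref{prop_current_local_c}, and then to invoke the mean-value / Riesz-representation property of harmonic functions on the disc. First I would observe that it suffices to prove the identity for $u$ continuous (hence, by density and the fact that both sides are continuous linear functionals dominated by the $L^1(m_P)$-norm, for all $u\in L^1(m_P)$); the positivity and the bound $\|B_R\|_{L^p\to L^p}=1$ then follow formally: positivity is clear from the definition of $m_{a,R}$ as a push-forward of a positive measure, $B_R 1 = 1$ because $M_R$ is exactly the normalizing mass, and the stated identity with $u\equiv 1$ shows $B_R$ preserves $m_P$, so $B_R$ is an $L^1(m_P)$-contraction; combined with $B_R 1=1$ and positivity this gives an $L^\infty$-contraction, and interpolation (as in the proof of Theorem \ref{th_heat_real}) gives $L^p$ for $1\le p\le\infty$, while testing on $u\equiv 1$ shows the norm is exactly $1$.

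Next I would unwind the definition. Since $m_P = T\wedge\omega_P$ and, on the hyperbolic part, $T$ is locally $\int h_a\,[\B\times\{a\}]\,d\mu(a)$ with $h_a$ positive harmonic, the key point is to understand $\int (B_R u)\,T\wedge\omega_P$ via Fubini. Writing $B_Ru(b) = \frac{1}{M_R}\int_{\D}\log^+\frac{r}{|\zeta|}\,(\phi_b)^*(u\,\omega_P)$, the crucial identity I want is that for a positive harmonic function $h$ on the disc (the leafwise density of $T$ pulled back by $\phi_b$), averaging $B_R$ against $h\,\omega_P$ reproduces the average of $u$ against $h\,\omega_P$. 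Concretely, the mechanism is the identity $\ddc\big(\log^+\frac{r}{|\zeta|}\big) = \delta_0 - (\text{uniform measure on }|\zeta|=r)$ on $\D$ (up to the normalization built into $M_R$ and $\dc$), i.e. $\log^+\frac{r}{|\zeta|}$ is, up to constants, the Green-type kernel whose Laplacian is a point mass minus a circle average. Integrating by parts / using that $h$ (and hence $u$'s leafwise contributions weighted by $h$) is harmonic should convert the $B_R$-average back to the original average. More carefully, I expect the argument to run: restrict to a measurable family of plaques where the covering maps $\phi_a$ depend continuously on $a$ (Proposition \ref{prop_lusine} and Corollary \ref{cor_lusin}), use the harmonicity of $T$ to write the transverse-measure identity, and then use that $\int_\D \log^+\frac{r}{|\zeta|}\,\psi(\zeta)\,\omega_P(\zeta)$ for $\psi$ harmonic equals (after normalization by $M_R$) the value picking out the harmonic average — the point being that the operators $B_R$ are, by construction, adjoint-compatible with the harmonic structure. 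The cleanest route is probably: show $\langle T\wedge\omega_P, B_R u\rangle = \langle T\wedge\omega_P, u\rangle$ by writing both sides through the disintegration and reducing to the statement that for each relevant $a$, $\int h_a \cdot (\text{kernel}) \cdot (u\circ\phi)\,\omega_P$ symmetrizes because $h_a$ is harmonic and the kernel's $\ddc$ is a difference of a Dirac and a circular average, both of which integrate $h_a$ to the same value $h_a(0)$ by the mean value property.

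The main obstacle I anticipate is handling the push-forward $(\phi_a)_*$ carefully when $\phi_a$ is an infinite-sheeted covering: $m_{a,R}$ lives on $L_a\subset X$, not on $\D$, so when I integrate $B_Ru$ against $T\wedge\omega_P$ over $X$ I must account for all the preimages of a plaque under $\phi_a$ and for how the different transversal parameters $a$ organize these preimages — this is exactly the deck-transformation bookkeeping that makes the ``Birkhoff sum'' interpretation work, and it is where one uses that $T$ is directed by the lamination and harmonic (so the leafwise densities $h_a$ transform correctly under the covering group). A secondary technical point is the integrability needed to justify Fubini: one must know $m_P$ is finite (assumed) and that $\log^+\frac{r}{|\zeta|}\,\omega_P$ has finite mass $M_R<\infty$ on $\D$ (clear, since near $\zeta=0$ the logarithmic singularity is integrable against the smooth Poincaré area form, and the support is compactly contained in $\D$). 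Once the measure-theoretic setup is in place, the harmonic mean-value identity finishes the proof; I would present the covering-space bookkeeping as the heart of the argument and keep the harmonic-function computation as a short lemma-level remark.
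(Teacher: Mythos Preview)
Your overall plan matches the paper's: localize via the disintegration of $T$ and Corollary \ref{cor_lusin}, reduce to a single hyperbolic leaf with harmonic density $h$, lift to the universal cover, and combine the mean-value property with deck-transformation bookkeeping. The formal consequences (positivity, $B_R1=1$, $L^p$-contractivity via interpolation) are exactly as you say.

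Two points deserve sharpening. First, the paper inserts a useful simplification you omit: it observes that $B_R$ is an average of the simpler operators
\[
B'_t u(a):=\frac{1}{M'_t}\int_{\D_t}(\phi_a)^*(u\,\omega_P),\qquad M'_t:=\int_{\D_t}\omega_P,
\]
so it suffices to prove the invariance for $B'_R$, whose kernel is just the characteristic function of a hyperbolic disc. This makes the subsequent symmetry argument transparent. Your $\ddc$-of-the-log-kernel idea is a detour: all that is actually used is that the kernel is \emph{radial}, so integrating a harmonic function against it returns the value at the center. Second, you correctly flag the covering-space bookkeeping as the heart of the matter but do not say what the argument is. In the paper one chooses a measurable fundamental set $A\subset\D$ for the deck group $\Gamma$, rewrites $\int_A\widehat{B'_Ru}\,\widehat h\,\omega_P$ and $\int_A\widehat u\,\widehat h\,\omega_P$ (after expanding $\widehat h$ by its mean value over $\D_R$-balls) as integrals of the $\Gamma^2$-invariant form $\widehat u(\zeta)\widehat h(\eta)\,\omega_P(\zeta)\wedge\omega_P(\eta)$ over the regions $W=\{(\zeta,\eta):\eta\in A,\ \dist(\zeta,\eta)<R\}$ and its transpose $W'$, and then observes that each $(\phi_a,\phi_a)$-fiber meets $W$ and $W'$ in the same number of points because
\[
\#\{\gamma\in\Gamma:\dist(\gamma\zeta,\eta)<R\}=\#\{\gamma\in\Gamma:\dist(\zeta,\gamma^{-1}\eta)<R\}.
\]
This $\gamma\mapsto\gamma^{-1}$ bijection is the concrete content of the ``symmetrization'' you allude to, and without it the argument is incomplete.
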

\proof
Fix an $R>0$.
The positivity of $B_R$ is clear. Since $B_R$ preserves constant functions, 
its norm in $L^p(m_P)$ is at least equal to 1.
It is also clear that $B_R$ is an operator of norm 1 on $L^\infty(m_P)$. 
If $B_R$ is of norm 1 on $L^1(m_P)$, by interpolation \cite{Triebel}, its norm on $L^p(m_P)$ is also equal to 1. 
So, the second assertion is a consequence of the first one. Define
another operator $B'_R$ by
$$B'_Ru(a):=\frac{1}{M'_R}\int_{\D_R} (\phi_a)^*
(u\omega_P) \quad \mbox{where}\quad M'_R:=\int_{\D_R} (\phi_a)^*(\omega_P).$$
Note that $M_R'$ is the Poincar{\'e} area of $\D_R$ which is also the Poincar\'e area of $\phi_a(\D_R)$ counted with multiplicity. The operator
$B_R$ can be obtained as an average of $B'_t$ on $t\leq R$. So, it is
enough to prove the first assertion of the proposition for $B'_R$ instead of $B_R$.

We can assume that $u$ is positive and using a partition of unity, 
we can also assume that $u$ has support in a compact set of a flow box
$\U\simeq \B\times\T$ 
as in Section \ref{section_riemann}. Let $\T_1$ be the set of $a\in\T$
such that $L_a$ 
is hyperbolic. We will use the decomposition of $T$ and the notation
as in Proposition 
\ref{prop_current_local_c}. By hypothesis, we can assume that the measure $\mu$ has total
mass on $\T_1$. 
Now, we apply Corollary \ref{cor_lusin} to $\mu$ in stead of $\nu$ and 
$4\lambda R$ instead of $R$ for a fixed constant $\lambda$ large enough. 
Let $\Sigma_n(R)$ denote the union of $L_{a,R}$
for $a\in \S_n$. 
Define by induction 
the function $u_n$ as follows: $u_1$ is the restriction of $u$ to
$\Sigma_1(R)$ 
and $u_n$ is the restriction of
$u-u_1-\cdots-u_{n-1}$ to $\Sigma_n(R)$. We have $u=\sum u_n$. So, it
is enough 
to prove the proposition for each $u_n$. 

We use now the properties of $\S_n$ given in Corollary
\ref{cor_lusin}. 
The set $\Sigma_n(4\lambda R)$ is a smooth lamination and the restriction
$T_n$ 
of $T$ to $\Sigma_n(4\lambda R)$ is a positive harmonic current. 
Observe that $B_Ru_n$ vanishes outside $\Sigma_n(\lambda R)$ and
does not depend on the restriction of $T$ to $X\setminus\Sigma_n(2\lambda R)$. 
Since there is a natural projection from $\Sigma_n$ to the transversal $\S_n$, 
the extremal positive harmonic currents on $\Sigma_n$ are supported by
a leaf and defined by a harmonic function. Therefore, we can reduce the
problem 
to the case where $T=h[L_{a,4\lambda R}]$ with $a\in \S_n$ and $h$ is positive
harmonic on $L_{a,4\lambda R}$. 

Define $\widehat u:=u_n\circ \phi_a$, $\widehat h:=h\circ \phi_a$ and
$\widehat{B_R'u}:=(B_R'u)\circ\phi_a$. The function $\widehat h$ is
harmonic on $\D_{4\lambda R}$. 
Choose a 
measurable set $A\subset \D_{2\lambda R}$ such that $\phi_a$ defines a
bijection 
between $A$ and $L_{a,2\lambda R}$. 
Denote also by $\omega_P$  the Poincar{\'e} metric on $\D$ and
$\dist$ the associated distance. We first observe that 
$$\widehat{B_R'u}(0):={1\over M_R'}\int_{\dist(\zeta,0)<R}\widehat u(\zeta)\omega_P(\zeta).$$
If $\eta$ is a point in $\D$ and $\tau:\D\to\D$ is an automorphism such that $\tau(0)=\eta$, then $\phi_a\circ\tau$ is also a covering map of $L_a$ but it sends 0 to $\phi_a(\eta)$. We apply the above formula to this covering map. Since $\tau$ preserves $\omega_P$ and $\dist$, we obtain
$$\widehat{B_R'u}(\eta):={1\over M_R'}\int_{\dist(\zeta,\eta)<R}\widehat u(\zeta)\omega_P(\zeta).$$
Hence, we have to show the following identity
$$\int_A \Big[{1\over M_R'}\int_{\dist(\zeta,\eta)<R} \widehat
u(\zeta)\omega_P(\zeta)\Big]\widehat h(\eta)\omega_P(\eta) =
\int_A\widehat u(\zeta)\widehat h(\zeta)\omega_P(\zeta).$$

Let $W$ denote the set of points $(\zeta,\eta)\in\D^2$ such that
$\eta\in A$ and $\dist(\zeta,\eta)<R$. Let $W'$ denote the symmetric
of $W$ with respect to the diagonal, i.e. the set of $(\zeta,\eta)$
such that $\zeta\in A$ and $\dist(\zeta,\eta)<R$. Since $\widehat h$ is
harmonic, we have
$$\widehat h(\zeta)= {1\over M_R'}\int_{\dist(\zeta,\eta)<R} \widehat
h(\eta)\omega_P(\eta).$$
Therefore, our problem is to show that the integrals of $\Phi:=\widehat
u(\zeta)\widehat h(\eta)\omega_P(\zeta)\wedge\omega_P(\eta)$ on $W$ and
$W'$ are equal.

Consider the map 
$\phi:=(\phi_a,\phi_a)$ from $\D^2$ to $L_a^2$. The 
fundamental group $\Gamma:=\pi_1(L_a)$ can be identified with a group of
automorphisms of $\D$. Since, $\Gamma^2$ acts on
$\D^2$ and preserves the form $\Phi$, our problem is equivalent to
showing that each fiber of $\phi$ has the same number of points in $W$
and in $W'$. 
We only have to consider the fibers of points in $L_{a,2\lambda R}\times L_{a,2\lambda R}$ since $B_R'u_n$ is supported on $L_{a,\lambda R}$. 
Fix a point $(\zeta,\eta)\in A^2$ and consider the
fiber $F$ of $\phi(\zeta,\eta)$. By definition of $A$, the numbers of
points in $F\cap W$ and $F\cap W'$ are respectively equal to
$$\#\big\{ \gamma\in\Gamma,\ \dist(\gamma\cdot\zeta,\eta)<R\big\}
\quad \mbox{and}\quad \#\big\{ \gamma\in\Gamma,\ \dist(\zeta,\gamma\cdot\eta)<R\big\}.$$
Since $\Gamma$ preserves the Poincar{\'e} metric, the first set is
equal to 
$$\{ \gamma\in\Gamma,\ \dist(\zeta,\gamma^{-1}\cdot\eta)<R\big\}.$$
It is now clear that the two numbers are equal. This
completes the proof.
\endproof

We have the following ergodic theorem.

\begin{theorem} \label{th_ergodic_Lp}  
Under the hypothesis of Theorem \ref{th_birkhoff}, 
if $u$ is a function in $L^p(m_P)$, with $1\leq p <\infty$, then
$B_Ru$ converge in $L^p(m_P)$ towards a constant function
$u^*$  when $R\to \infty$. 
\end{theorem}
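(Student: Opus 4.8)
The strategy is to split $L^p(m_P)$ into the constants, which are fixed by every $B_R$, and a complement on which the $B_R$'s are seen to contract. First recall from Proposition \ref{prop_B_L1} that each $B_R$ is a positive operator of norm $1$ on every $L^p(m_P)$, $1\le p\le\infty$, that $\int(B_Ru)\,m_P=\int u\,m_P$, and (since $m_{a,R}$ is a probability measure) that $B_R$ fixes the constants. For a given $u\in L^p(m_P)$ set $u^*:=\int u\,dm_P$; I will show that $B_Ru\to u^*$ in $L^p(m_P)$, which gives the theorem.

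The heart of the argument is a Jensen-type identity. For $\varphi\in\Dc^0(X)$, the defining relation of $\Delta_P$ gives, leafwise, $(\Delta_P\varphi)\,\omega_P=i\ddbar\varphi$; pulling this back by a universal covering map $\phi_a\colon\D\to L_a$ with $\phi_a(0)=a$, and writing $\widehat\varphi:=\varphi\circ\phi_a$ (so that $\phi_a^*\omega_P$ is the Poincar\'e form on $\D$), one gets
\[
B_R(\Delta_P\varphi)(a)=\frac{1}{M_R}\int_{\D}\log^+\frac{r}{|\zeta|}\,i\ddbar\widehat\varphi(\zeta).
\]
Since $\log\frac{r}{|\zeta|}$ is, up to a constant, the Green function of $r\D$ with pole at $0$, and $\overline{r\D}\Subset\D$ where $\widehat\varphi$ is smooth, Green's (Jensen's) formula turns the last integral into $\tfrac12\bigl(\int_0^{2\pi}\widehat\varphi(re^{i\theta})\,d\theta-2\pi\widehat\varphi(0)\bigr)$, which is bounded in modulus by $2\pi\|\varphi\|_{\Cc^0}$, uniformly in $a$. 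As $R\to\infty$, i.e. $r\to1^-$, one has $M_R\to\infty$: indeed $\log^+\frac r{|\zeta|}\uparrow\log\frac1{|\zeta|}$ and $\int_\D\log\frac1{|\zeta|}\,\omega_P=\infty$, so monotone convergence applies. Hence
\[
\bigl\|B_R(\Delta_P\varphi)\bigr\|_{L^\infty(m_P)}\le\frac{2\pi\|\varphi\|_{\Cc^0}}{M_R}\xrightarrow[R\to\infty]{}0,
\]
and, $m_P$ being a probability measure, also $B_R(\Delta_P\varphi)\to0$ in $L^p(m_P)$ for every $p<\infty$.

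It remains to upgrade this to all mean-zero functions. Since $T$ is extremal, Proposition \ref{prop_dense_laplace_c} (case $p=2$) says that $\Delta_P(\Dc^0(X))$ is dense in the hyperplane $\{w\in L^2(m_P):\int w\,dm_P=0\}$. Given a bounded $w$ with $\int w\,dm_P=0$ and $\epsilon>0$, pick $\varphi$ with $\|w-\Delta_P\varphi\|_{L^2}<\epsilon$; then $\|B_Rw\|_{L^2}\le\|w-\Delta_P\varphi\|_{L^2}+\|B_R(\Delta_P\varphi)\|_{L^2}<2\epsilon$ for $R$ large, so $B_Rw\to0$ in $L^2(m_P)$. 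If $1\le p\le2$ this already gives $B_Rw\to0$ in $L^p(m_P)$; if $2<p<\infty$, the bound $|B_Rw|\le\|w\|_\infty$ ($B_R$ contracts $L^\infty$) yields $\|B_Rw\|_{L^p}^p\le\|w\|_\infty^{p-2}\,\|B_Rw\|_{L^2}^2\to0$. Finally, for arbitrary $u\in L^p(m_P)$ with $1\le p<\infty$, approximate $u-u^*$ in $L^p(m_P)$ by bounded mean-zero functions (truncation) and use that $B_R$ is a contraction on $L^p(m_P)$, obtaining $B_Ru=B_R(u-u^*)+u^*\to u^*$ in $L^p(m_P)$.

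The only genuinely delicate point is the displayed Jensen identity and the ``smoothing'' it encodes: $\Delta_P\varphi$ need not be bounded (only in $L^2(m_P)$), yet $B_R(\Delta_P\varphi)$ is $O(1/M_R)$ in sup norm. Making this rigorous requires checking that the leafwise relation $(\Delta_P\varphi)\,\omega_P=i\ddbar\varphi$, its pull-back under $\phi_a$, and the disintegration underlying $\Delta_P$ are all valid $m_P$-a.e.; beyond that the proof is soft functional analysis resting on Propositions \ref{prop_B_L1} and \ref{prop_dense_laplace_c}, and I expect no further obstacle.
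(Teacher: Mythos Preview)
Your proof is correct and follows essentially the same approach as the paper: compute $B_R(\Delta_P\varphi)$ via Green's formula to see it is $O(1/M_R)$ uniformly, then combine this with the density statement of Proposition \ref{prop_dense_laplace_c} and the $L^p$-contraction property of $B_R$ from Proposition \ref{prop_B_L1}. The only cosmetic difference is that the paper reduces to $p=1$ at the outset (using that for bounded $u$ convergence in $L^1$ forces convergence in every $L^p$) and then invokes Proposition \ref{prop_dense_laplace_c} directly with $p=1$, whereas you work through $L^2$ and treat $p\le 2$ and $p>2$ separately; the substance is identical.
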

\begin{proof}
We show that it is enough to consider the case where $p=1$.
By Proposition \ref{prop_B_L1}, it is enough to consider $u$ in a
dense subset of $L^p(m_P)$, e.g. $L^\infty(m_P)$. 
For $u$ bounded, we have $\|B_Ru\|_\infty\leq \|u\|_\infty$. Therefore, if $B_Ru\to u^*$ in $L^1(m_P)$ we have $B_Ru\to u^*$ in $L^p(m_P)$, $1\leq p<\infty$. 

Now, assume that $p=1$. Since $B_R$ preserves constant functions,
by Proposition \ref{prop_dense_laplace_c} applied to $p=1$, it is enough to consider
$u=\Delta_P v$ with $v\in\Dc^0(M)$. We have to show that $B_Ru$
converges to 0. 
Note that since $v$
is in $\Dc^0(M)$, the function $\Delta_P v$ is defined at every point
outside the parabolic leaves by the formula $(\Delta_P v)
\omega_P:=i\ddbar v$ on the leaves. 

Define 
$$T_{a,R}:=\frac{1}{M_R}(\phi_a)_* \big(\log^+ \frac{r}{|\zeta|}\big).$$
We have $m_{a,R}=T_{a,R}\wedge\omega_P$ and
$$B_Ru(a)= B_R(\Delta_Pv)(a) = \langle  T_{a,R},(\Delta_P v) \omega_P \rangle
= \langle  T_{a,R}, i\ddbar v \rangle=\langle i\ddbar T_{a,R},v\rangle.$$
Since $M_R\rightarrow\infty$, it is easy to see that the mass of
$\ddbar T_{a,R}$ tends to 0 uniformly on $a$. So, the last integral
tends to 0 uniformly on $a$. The result follows.\end{proof}

Let $p(x,y,t)$ be the heat kernel on the disc $\D$ with respect to the Poincar{\'e} metric 
$\omega_P$ on $\D$. This is a positive function on $\D^2\times \R^*_+$ 
smooth when $(x,y)$ is outside the diagonal of $\D^2$. It satisfies
$$\int_\D p(x,y,t) \omega_P (y)=1 \quad \mbox{and} \quad 
{1\over 2\pi}\log{\frac{1}{|y|}}  =\int_0^\infty p(0,y,t)dt.$$
The function $p(0,\cdot,t)$ is radial, see e.g. Chavel
\cite[p.246]{Chavel}.
Define the operator $S_t$ by 
$$S_tu(a):= \int_\D p(0,\cdot,t)(u\circ \phi_a)\omega_P.$$
Observe  that  the family  $S_t$ with $t>0$  is  a  semi-group of operators, i.e. $S_{t+t'}u=S_t\circ S_{t'}u$ for  $u\in \Dc^0(M)$.

\begin{lemma} \label{lemma_heat_delta}
The operator $S_t$ extends continuously to an operator of norm $1$ on  $L^p(m_P)$ for 
$1\leq p\leq\infty$. Moreover, there is a constant $c>0$ such that for 
all $\epsilon>0$ and $u\in L^1(m_P)$, 
we have 
$$m_P\big\{\widetilde Su>\epsilon\big\}\leq c\epsilon^{-1}\|u\|_{L^1(m_P)},$$
where the operator $\widetilde S$ is defined by 
$$\widetilde Su(a):=\limsup_{R\to\infty}\Big|\frac{1}{R}\int_0^R S_tu(a)dt\Big|.$$
\end{lemma}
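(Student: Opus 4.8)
The statement has two parts: that $S_t$ is a norm-one operator on every $L^p(m_P)$, and a weak-type $(1,1)$ maximal inequality for the time averages $A_Ru:=\frac1R\int_0^R S_tu\,dt$. The plan is to first show that $m_P$ is $S_t$-invariant and that $S_t$ is simultaneously a positive contraction of $L^1(m_P)$ and of $L^\infty(m_P)$, and then to invoke the maximal ergodic theorem for such semigroups. The basic computation, exactly parallel to the proof of Proposition \ref{prop_B_L1}, is that for $u\in\Dc^0(M)$ and a covering map $\phi_a$, pre-composing $\phi_a$ with the automorphism of $\D$ sending $0$ to $\eta$ and using the isometry invariance of the Poincar\'e heat kernel gives, in the coordinate $\eta$ on $\D$,
$$\widehat{S_tu}(\eta)=\int_\D p(\eta,\zeta,t)\,(u\circ\phi_a)(\zeta)\,\omega_P(\zeta);$$
differentiating under the integral sign one obtains that $S_tu$ is smooth along the leaves and satisfies there $\partial_t(S_tu)=\Delta_P(S_tu)$, where $(\Delta_P v)\,\omega_P:=i\ddbar v$.

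For part (i): since $p(0,\cdot,t)>0$ and $\int_\D p(0,\zeta,t)\,\omega_P=1$, the operator $S_t$ is positive and $S_t1=1$, hence $|S_tu|\le S_t|u|\le\|u\|_\infty$ and $\|S_t\|_{L^\infty}=1$. The crux is the invariance $\int S_tu\,T\wedge\omega_P=\int u\,T\wedge\omega_P$ for $u\in\Dc^0(M)$. I would prove it by differentiating in $t$: from the first paragraph,
$$\frac{d}{dt}\int S_tu\,T\wedge\omega_P=\int\Delta_P(S_tu)\,T\wedge\omega_P=\int i\ddbar(S_tu)\wedge T,$$
which vanishes because $T$ is $\ddbar$-closed; concretely, after a partition of unity subordinate to flow boxes one reduces, via the local disintegration $T=\int h_a[\B\times\{a\}]\,d\mu(a)$ of Proposition \ref{prop_current_local_c} and the harmonicity of the $h_a$, to $\int_\B i\ddbar(\chi\,S_tu)(\cdot,a)\,h_a=\int_\B(\chi\,S_tu)(\cdot,a)\,i\ddbar h_a=0$ plaque by plaque (no boundary terms, since the cut-offs are compactly supported in the flow box). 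Hence $t\mapsto\int S_tu\,T\wedge\omega_P$ is constant on $(0,\infty)$, and letting $t\to0^+$ (where $S_tu\to u$ pointwise, the heat kernel concentrating at the pole) identifies the constant with $\int u\,T\wedge\omega_P$. It follows that $\|S_tu\|_{L^1(m_P)}\le\int S_t|u|\,dm_P=\int|u|\,dm_P$, so $S_t$ extends to a positive contraction of $L^1(m_P)$; interpolating between $L^1$ and $L^\infty$ (\cite{Triebel}, as in Theorems \ref{th_heat_real} and \ref{th_heat_c}) yields a contraction of every $L^p(m_P)$, of norm exactly one because $S_t1=1$.

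For part (ii): the family $\{S_t\}_{t>0}$ is a semigroup (noted just before the lemma) of positive operators which, by part (i), are contractions of both $L^1(m_P)$ and $L^\infty(m_P)$, and it is strongly continuous on $L^1(m_P)$ at $t=0$ — this holds for $u\in\Dc^0(M)$ by pointwise convergence plus dominated convergence, and then for all $u\in L^1(m_P)$ by density and the uniform $L^1$-bound. Since $(t,a)\mapsto S_tu(a)$ is jointly measurable, the averages $A_Ru$ are well defined and $|A_Ru|\le A_R|u|$ by positivity. The maximal ergodic inequality of Hopf and Dunford--Schwartz for such semigroups (\cite{DunfordSchwartz}, cf.\ the reference used for Corollary \ref{cor_ergodic}) then provides a constant $c>0$ with
$$m_P\Big\{\sup_{R>0}A_R|u|>\epsilon\Big\}\le c\,\epsilon^{-1}\|u\|_{L^1(m_P)}\qquad(u\in L^1(m_P),\ \epsilon>0).$$
Because $\widetilde Su=\limsup_{R\to\infty}|A_Ru|\le\sup_{R>0}A_R|u|$, this gives the asserted inequality.

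The main obstacle is the vanishing $\int i\ddbar(S_tu)\wedge T=0$ in part (i): since the Poincar\'e metric need not be transversally continuous, $S_tu$ is only leafwise smooth and not a genuine test function on $M$, so one cannot apply $\ddbar T=0$ directly but must argue through the local structure of $T$ and Green's formula on plaques as above. Two further points are routine but should be checked: the legitimacy of differentiating under the integral in $t$, which follows from the standard bounds on $\partial_t p=\Delta_\zeta p$ for the hyperbolic heat kernel uniformly for $t$ in compact subsets of $(0,\infty)$; and the harmlessness of the singular set $E$, which being finite is $m_P$-negligible by Skoda's mass estimate (Lemma \ref{lemma_skoda}).
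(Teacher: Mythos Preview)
Your argument is correct, and for part (ii) it coincides with the paper's: both invoke the Dunford--Schwartz maximal ergodic lemma for semigroups of simultaneous $L^1$/$L^\infty$ contractions (Lemma VIII.7.11 in \cite{DunfordSchwartz}).

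For part (i), however, the paper takes a different and shorter route. Rather than differentiating in $t$ and wrestling with the fact that $S_tu$ is only leafwise smooth, the paper observes that $p(0,\cdot,t)$ is a \emph{radial} function on $\D$, and therefore $S_t$ can be written as an average (over $R$) of the operators $B_R'$ introduced in the proof of Proposition \ref{prop_B_L1}. Since that proposition already established $\int B_R' u\,dm_P=\int u\,dm_P$ --- via the combinatorial counting argument with the deck group $\Gamma$ --- the invariance of $m_P$ under $S_t$ follows immediately by averaging, with no regularity issue to address. Your approach via $\partial_t\int S_tu\,dm_P=\int i\ddbar(S_tu)\wedge T=0$ is conceptually natural and independent of Proposition \ref{prop_B_L1}, but the price is exactly the obstacle you flag: one must justify the integration by parts plaque-by-plaque because $S_tu\notin\Dc^0(M)$, and also justify differentiation under the integral. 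The paper's radial-decomposition trick buys a clean reduction to work already done; your heat-equation argument is more self-contained but carries more technical overhead.
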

\proof
It is clear that $S_t$ is positive and preserves constant
functions. Its norm on $L^\infty(m_P)$ is equal to 1. Since
$p(0,\cdot,t)$ is a radial function, 
as in Proposition \ref{prop_B_L1}, $S_t$ is an
average of $B_R'$. We then obtain in the same way the first assertion
of the lemma. The second one is a direct consequence of  Lemma VIII.7.11 in Dunford-Schwartz
\cite{DunfordSchwartz}. This lemma says that if $S_t$ is a semi-group
acting on $L^1(m)$ for some probability measure $m$ such that
$\|S_t\|_{L^1}\leq 1$,  $\|S_t\|_{L^\infty}\leq 1$ and $t\mapsto S_tu$
is measurable with respect to the Lebesgue measure on $t$, then 
$$m\big\{\widetilde Su>\epsilon\big\}\leq
c\epsilon^{-1}\|u\|_{L^1}$$
where $\widetilde S$ is defined as above.
\endproof

Consider also  the  operator $\widetilde B$ given by
$$\widetilde B u(a):=\limsup_{R\to\infty}| B_Ru(a)|.$$
We have the following lemma.

\begin{lemma}\label{lemma_limsup}
There is a constant $c>0$ such that for all $\epsilon>0$ and $u\in L^1(m_P)$ we have 
$$m_P\big\{\widetilde Bu>\epsilon \big\} \leq   c\epsilon^{-1} \|u\|_{L^1(m_P)}.$$ 
\end{lemma}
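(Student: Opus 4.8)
The plan is to compare the Birkhoff-type averaging operator $B_R$ with the heat-semigroup average $\frac{1}{R}\int_0^R S_t\,dt$, for which the maximal inequality is already available in Lemma \ref{lemma_heat_delta} via the Dunford--Schwartz lemma. The key identity on the disc is
$$\frac{1}{2\pi}\log^+\frac{r}{|\zeta|}=\frac{1}{2\pi}\log\frac1{|\zeta|}-\frac{1}{2\pi}\log\frac1{r}\cdot\mathbf{1}_{\{|\zeta|<r\}}+\Big(\text{a bounded, }r\text{-controlled error}\Big),$$
or more robustly the representation $\frac{1}{2\pi}\log\frac1{|\zeta|}=\int_0^\infty p(0,\zeta,t)\,dt$ already quoted. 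Since $M_R$ is comparable to $R$ (indeed $M_R\sim cR$ as $R\to\infty$), the kernel $\frac1{M_R}\log^+\frac{r}{|\zeta|}$ with $R=\log\frac{1+r}{1-r}$ can be written, up to a multiplicative constant tending to $1$, as an average $\frac1R\int_0^R(\text{something})\,dt$ of the heat kernels. Concretely: first truncate $\log\frac1{|\zeta|}=\int_0^\infty p(0,\zeta,t)\,dt$ at a ``hyperbolic cutoff'' corresponding to radius $r$, which is exactly what the $\log^+$ does; the resulting kernel is dominated by $\int_0^{R}p(0,\zeta,t)\,dt$ up to a bounded factor. This gives a pointwise bound $B_Ru(a)\le C\,\frac1R\int_0^R S_tu(a)\,dt$ for $u\ge0$, hence $\widetilde Bu\le C\,\widetilde Su$ pointwise for nonnegative $u$, and the claimed weak-type $(1,1)$ inequality for $\widetilde B$ follows immediately from Lemma \ref{lemma_heat_delta} by splitting a general $u\in L^1(m_P)$ into positive and negative parts.

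First I would verify the asymptotics $M_R\sim cR$ and, more importantly, establish the kernel domination on $\D$: show there is a constant $C>0$, independent of $r$, such that
$$\frac{1}{M_R}\log^+\frac{r}{|\zeta|}\ \le\ \frac{C}{R}\int_0^R p(0,\zeta,t)\,dt\qquad\text{for all }\zeta\in\D,\ R=\log\tfrac{1+r}{1-r}.$$
This is a one-variable computation comparing the (explicitly radial) heat kernel on the hyperbolic disc with the Green-type potential $\log^+\frac{r}{|\zeta|}$; one uses that $\int_0^\infty p(0,\zeta,t)\,dt=\frac{1}{2\pi}\log\frac1{|\zeta|}$ and that for small hyperbolic time the heat mass has not yet spread past hyperbolic radius $R$, so truncating the time integral at $R$ only throws away the part of $\log\frac1{|\zeta|}$ corresponding to $|\zeta|$ close to $1$, which is precisely the part killed by the $\log^+$ up to a bounded error. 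Next, I would push this inequality forward by $\phi_a$: since $S_tu(a)=\langle p(0,\cdot,t), (u\circ\phi_a)\,\omega_P\rangle$ and $B_Ru(a)=\frac{1}{M_R}\langle\log^+\frac{r}{|\zeta|},(u\circ\phi_a)\,\omega_P\rangle$, the pointwise kernel bound on $\D$ integrates against $(u\circ\phi_a)\omega_P$ (valid because $u\ge0$) to give $B_Ru(a)\le C\,\frac1R\int_0^R S_tu(a)\,dt$ for every $a$ in the hyperbolic part, which is $m_P$-full since $T$ has no mass on parabolic leaves. Taking $\limsup_{R\to\infty}$ yields $\widetilde Bu\le C\widetilde Su$ pointwise $m_P$-a.e. for $u\ge0$.

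Finally, for general $u\in L^1(m_P)$ write $u=u^+-u^-$; then $|B_Ru|\le B_R u^++B_Ru^-$ by positivity of $B_R$ (Proposition \ref{prop_B_L1}), so $\widetilde Bu\le \widetilde B u^++\widetilde B u^-\le C(\widetilde S u^++\widetilde S u^-)$, and
$$m_P\{\widetilde Bu>\epsilon\}\le m_P\{\widetilde Su^+>\tfrac{\epsilon}{2C}\}+m_P\{\widetilde Su^->\tfrac{\epsilon}{2C}\}\le \frac{2cC}{\epsilon}\big(\|u^+\|_{L^1(m_P)}+\|u^-\|_{L^1(m_P)}\big)=\frac{2cC}{\epsilon}\|u\|_{L^1(m_P)},$$
using Lemma \ref{lemma_heat_delta} twice. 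Replacing $2cC$ by $c$ gives the statement. The main obstacle I anticipate is the kernel comparison step on $\D$: one must be careful that the constant $C$ is genuinely uniform in $r$ (equivalently in $R$) as $r\to1$, which is where the precise asymptotics $M_R\sim cR$ and the hyperbolic-time localization of the heat kernel enter; everything else is a soft functional-analytic packaging of the already-proven maximal inequality for the semigroup.
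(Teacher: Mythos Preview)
Your overall strategy---reducing the maximal inequality for $\widetilde B$ to the one for $\widetilde S$ already provided by Lemma \ref{lemma_heat_delta}---is exactly the paper's plan, and the final packaging via $u=u^+-u^-$ is the same. The implementations differ at the crucial step, however, and your version leaves that step unproven.

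The paper does not attempt a pointwise kernel domination $\frac{1}{M_R}\log^+\frac{r}{|\zeta|}\le \frac{C}{R}\int_0^R p(0,\zeta,t)\,dt$. Instead it restricts to bounded $u$ (say $0\le u\le 1$) and proves the two-sided estimate
\[
\Big|B_Ru(a)-\frac{2\pi}{M_R}\int_0^{M_R/2\pi} S_tu(a)\,dt\Big|\le cR^{-1/2}\sqrt{\log R},
\]
which gives the \emph{equality} $\widetilde B u=\widetilde S u$ for bounded $u$. This estimate is not a soft ``one-variable computation'': it uses Chavel's explicit formula for $p(0,\zeta,t)$, splits $\D_R$ into the annulus $\D_R\setminus\D_{R'}$ with $R'=R-\sqrt{2R\log R}$ and the disc $\D_{R'}$, and controls the tail $\int_{M_R/2\pi}^\infty p(0,\zeta,t)\,dt$ on $\D_{R'}$ by a Laplace-type analysis of the exponent. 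Because the two-sided bound is only available for bounded $u$, the paper first writes a positive $u\in L^1(m_P)$ as $\sum_{i\ge 0} u_i$ with $u_i$ bounded and $\|u_i\|_{L^1}\le 4^{-i}$, applies Lemma \ref{lemma_heat_delta} to each $u_i$ at threshold $2^{-i-1}\epsilon$, and sums the resulting geometric series.

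Your one-sided kernel inequality, if true, would bypass the decomposition into bounded pieces and give $\widetilde Bu\le C\widetilde Su$ directly for $u\ge 0$. But the heuristic you offer (``truncating the time integral at $R$ only throws away the part of $\log\frac{1}{|\zeta|}$ corresponding to $|\zeta|$ close to $1$'') is not a proof: for $\zeta$ at hyperbolic distance $\rho$ close to $R$ the peak of $t\mapsto p(0,\zeta,t)$ sits near $t=\rho$ with width of order $\sqrt{\rho}$, and you must verify that $\int_0^R p\,dt$ still dominates $\log^+\frac{r}{|\zeta|}$ uniformly in that regime. Doing this rigorously requires the same hyperbolic heat-kernel estimates the paper carries out. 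So you have not avoided the hard analytic work; you have relocated it into an unverified inequality. If you can establish the kernel bound cleanly your packaging is slightly more economical, but as written the proposal has a genuine gap precisely where the paper invests its effort.
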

\proof
Since we can write $u=u^+-u^-$ with $\|u\|_{L^1(m_P)}=\|u^+\|_{L^1(m_P)}+\|u^-\|_{L^1(m_P)}$, it is enough to consider $u$ positive with $\|u\|_{L^1(m_P)}\leq 1$. 
Write $u=\sum_{i\geq 0} u_i$ with $u_i$ positive and bounded such that $\|u_i\|_{L^1(m_P)}\leq 4^{-i}$. 
We will show that  $\widetilde Su_i=\widetilde Bu_i$. This, together with 
Lemma \ref{lemma_heat_delta} applied to $u_i$ and to $2^{-i-1}\epsilon$ give the result.

So, in what follows, assume that $0\leq u\leq 1$. We show that $\widetilde Su=\widetilde Bu$. 
This assertion will be  an immediate consequence  of  the  following  estimate
$$\Big|B_Ru(a)-\frac{2\pi}{M_R}\int_0^{{M_R\over 2\pi}} S_tu(a)dt\Big|\leq  c R^{-1/2}\sqrt{\log R}$$
for $m_P$-almost every $a$ and $c$ a constant independent of $u$.
Observe that the integrals in the left hand side of the last inequality can be computed on $\D$ in term of $\widehat u:=u\circ \phi_a$ and the Poincar{\'e} metric $\omega_P$ on $\D$. So, in order to simplify the notation, we will work on $\D$. We have to show that
$$\Big|B_R\widehat u(0)-\frac{2\pi}{M_R}\int_0^{M_R\over 2\pi} S_t\widehat u(0)dt\Big|\leq  cR^{-1/2}\sqrt{\log R}$$
where
$$B_R\widehat u(0):=\frac{1}{M_R}\int_{\D_R} \log{\frac{r}{|\zeta|}}\widehat u\omega_P
\quad\mbox{and}\quad S_t\widehat u(0):= \int_\D p(0,\cdot,t)\widehat u\omega_P.$$

A  direct  computation shows that 
$|M_R-2\pi R|$ is bounded by a constant and the area of $\D_R$ is of order $e^R$ for $R$ large. Define
$$D_R\widehat u(0):=\frac{1}{M_R}\int_{\D_R} \log{\frac{1}{|\zeta|}}\widehat u\omega_P=
\frac{2\pi}{M_R}\int_{\D_R}\Big[\int_0^\infty p(0,\zeta,t)dt\Big] \widehat u\omega_P.$$
We have
$$|B_R\widehat u(0)-D_R\widehat u(0)|\leq {1\over M_R} \log r\int_{\D_R} \omega_P
\lesssim  {1\over M_R} (1-r)e^R\lesssim {1\over R}\cdot$$
Therefore, we can replace $B_R\widehat u(0)$ with $D_R\widehat u(0)$. We have
\begin{eqnarray*}
\lefteqn{\Big|\frac{2\pi}{M_R}\int_0^{M_R\over 2\pi} S_t\widehat u(0)dt-D_R\widehat u(0)\Big|=}\\
& = & \Big |\frac{2\pi}{M_R}\int_0^{M_R\over 2\pi} \Big[\int_\D p(0,\zeta,t)\widehat u\omega_P\Big]dt-  
\frac{2\pi}{M_R}\int_0^\infty \Big[\int_{\D_R} 
p(0,\zeta,t)\widehat u\omega_P\Big]dt\Big|\\
& \leq &  \frac{2\pi}{M_R} \int_{\D_R}\Big[\int_{M_R\over 2\pi}^\infty
p(0,\zeta,t) dt \Big]\omega_P  +
\frac{2\pi}{M_R} \int_{\D\setminus \D_R} \Big[\int_0^{M_R\over 2\pi}  p(0,\zeta,t) dt\Big]\omega_P.  
\end{eqnarray*}
The two last integrals are equal since using properties of the heat kernel $p(x,y,t)$ summarized above, 
we have
$$\frac{2\pi}{M_R}\int_\D\Big[\int_0^{M_R\over 2\pi}
p(0,\zeta,t)dt\Big]\omega_P= 1 =
\frac{2\pi}{M_R}\int_{\D_R} \Big[\int_{t=0}^\infty p(0,\zeta,t)dt\Big]\omega_P.$$
Therefore,  it remains to check that
$$\frac{2\pi}{M_R}\int_{\D_R} 
\Big[\int_{M_R\over 2\pi}^\infty  p(0,\zeta,t) dt \Big] \omega_P\lesssim  R^{-1/2}\sqrt{\log R}.$$

To prove this,  split the integral over $\D_R$ into two 
integrals over $\D_R\setminus\D_{R'}$ and over $\D_{R'}$ 
where  $R':= R-R^{1/2}\sqrt{2\log R}$. 
Using the properties of the heat kernel and some direct computation, e.g. $\log|\zeta|\simeq |\zeta|-1$, 
we can bound the first part by
$$\frac{2\pi}{M_R}  \int_{\D_R\setminus \D_{R'}} \log {\frac{1}{|\zeta|}} \omega_P\lesssim
\frac{R-R'}{M_R}\lesssim \frac{R-R'}{R}\lesssim R^{-1/2}\sqrt{\log R}.$$

For the second part, we claim that
$$\int_{M_R\over 2\pi}^{\infty}  p(0,\zeta,t) dt\leq
c{R^{1/2}\over \sqrt{\log R}} \log { \frac{1}{|\zeta|}} \quad  \mbox{for}\quad
\zeta\in\D_{R'}$$
where $c>0$ is a constant independent of $R$ and $\zeta$.
Taking this for granted, since
 the Poincar{\'e} area of $\D_R$ is of order $e^R$,
using the definition of $M_R$, we get
$$\frac{1}{M_R}  
\int_{\D_R} \log {\frac{1}{|\zeta| }} \omega_P=1-{1\over M_R}\log r\int_{\D_R}
\omega_P\lesssim 1+{1\over R}(1- r) e^R\lesssim 1.$$
Therefore, our second integral to estimate is bounded by a constant times
$${1\over \sqrt{\log R}}   R^{-1/2}   \frac{1}{M_R}  
\int_{\D_R} \log {\frac{1}{|\zeta| }} \omega_P\lesssim
R^{-1/2}{1\over \sqrt{\log R}}\lesssim R^{-1/2}\sqrt{\log R}.$$
Now, it remains to prove the above claim.

Denote by $\rho$ the Poincar{\'e} distance between 0 and $\zeta$. It is given by the formula
$$\rho:=\log {1+|\zeta|\over 1-|\zeta|}\cdot$$
Recall a formula in Chavel \cite[p.246]{Chavel} 
$$p(0,\zeta,t)={\sqrt{2}e^{-t/4} \over (2\pi t)^{3/2}}
\int_{\rho}^{\infty}{s  e^{-{s^2 \over 4t}}\over \sqrt{\cosh s-\cosh\rho}}ds.$$
So, we have $\sqrt{\cosh s-\cosh\rho}\gtrsim e^{s/2}$ when $s\geq \rho+1$ and 
$\sqrt{\cosh s-\cosh\rho}\gtrsim e^{s/2}\sqrt{s-\rho}$ for $\rho<s\leq \rho+1$. 
Since $|M_R-2\pi R|$ is bounded by a constant, the integral in the claim is bounded up to some constants by
\begin{eqnarray*}
\lefteqn{\int_R^\infty\Big[{\sqrt{2}e^{-t/4} \over (2\pi t)^{3/2}}
\int_\rho^\infty {se^{-{s^2 \over 4t}} \over \sqrt{\cosh s-\cosh \rho}}ds\Big]dt}\\
&\lesssim& \int_R^\infty\Big[{\sqrt{2}e^{-t/4} \over (2\pi
  t)^{3/2}} \int_\rho^\infty {s e^{-{s^2\over 4t}} \over e^{s/2}}ds\Big] dt  +
 \int_R^\infty\Big[{\sqrt{2}e^{-t/4} \over (2\pi t)^{3/2}}
\int_\rho^{\rho+1}{s e^{-{s^2\over 4t}}\over e^{s/2} 
\sqrt{s-\rho}}ds\Big] dt\\
&\lesssim&   \int_R^\infty {1 \over \sqrt{t}}\Big[ \int_\rho^\infty
{s\over 2\sqrt{t}} e^{-\big({s\over 2\sqrt{t}} +
{\sqrt{t}\over 2}\big)^2}d\big({s\over 2\sqrt{t}}\big)\Big]dt
+\int_R^\infty\Big[{\sqrt{2}e^{-t/4}\over (2\pi
  t)^{3/2}} {(\rho+1)  e^{-{\rho^2\over 4t}} \over e^{\rho/2}}\Big]dt\\
&\lesssim&   \int_R^\infty {1\over \sqrt{t}} e^{- \big ({\rho\over 2\sqrt{t}} 
+{\sqrt{t} \over 2}\big)^2}dt \lesssim e^{-\rho}\int_R^\infty {1\over \sqrt{t}} e^{- \big
  ({\rho\over 2\sqrt{t}} -{\sqrt{t} \over 2}\big)^2}dt.
\end{eqnarray*}
We used here that $\rho\leq R\leq t$. 
Observe that the function $(t,\rho)\mapsto {\sqrt{t} \over 2}-
{\rho\over 2\sqrt{t}}$ is  increasing in $t$ and  decreasing  in
$\rho$. Therefore,
$$ {\sqrt{t} \over 2}-{\rho\over 2\sqrt{t}}\geq \frac{R-R'}{2\sqrt{R}}\gtrsim \sqrt{\log R}$$
for $\rho\leq  R'\leq R$ and $t\geq R$.   
We can bound the last expression in the above sequence of inequalities
by a constant times
$$e^{-\rho}{1\over \sqrt{\log R}}\int_R^\infty  e^{- \big
  [{\rho\over 2\sqrt{t}} -{\sqrt{t} \over 2}\big]^2} 
\Big[{\sqrt{t} \over 2}- {\rho\over 2\sqrt{t}}\Big]   d \Big[{\sqrt{t}
  \over 2}- {\rho\over 2\sqrt{t}}\Big] =  {e^{-\rho} \over \sqrt{\log R}} 
e^{-\frac{(R-R')^2}{4R}} .$$ 
Finally, we obtain the claim using that $R':= R-R^{1/2}\sqrt{2\log R}$ and
$$e^{-\rho}= {1-|\zeta|\over 1+|\zeta|}\lesssim\log {\frac{1}{|\zeta| }}$$  
for $\zeta\in\D_{R'}$. This completes the proof.
\endproof

\noindent{\it Proof of Theorem \ref{th_birkhoff}.}  Let $u$ be a function in $L^1(m_P)$. It is enough to 
show that $B_Ru(a)\to \langle m_P,u\rangle$ for $m_P$-almost every $a$.
Since this is true when $u$ is constant, we can assume   without loss of generality that  $\langle m_P,u\rangle=0$.
Fix a constant $\epsilon>0$ and define 
$E_\epsilon(u):=\big\{\widetilde Bu\geq \epsilon \big\}$.
To prove  the theorem  it suffices  to show  that  $m_P(E_\epsilon(u))=0$.

By Proposition  \ref{prop_dense_laplace_c},  $\Delta_P(\Dc^0(M))$ is  dense
in the hyperplane of functions with mean 0 in $L^1(m_P)$.   
Consequently,  for  every $\delta>0$  we  can choose
a smooth function $v$ such that $\|\Delta_P v-u\|_{L^1(m_P)}<\delta$.   
We  have  
$$E_{\epsilon}(u)\subset E_{\epsilon/2}(u-\Delta_Pv)\cup E_{\epsilon/2}(\Delta_Pv).$$
Therefore,
$$m_P\big(E_{\epsilon}(u)\big)\leq m_P\big(E_{\epsilon/2}(u-\Delta_Pv)\big) +
m_P\big(E_{\epsilon/2}(\Delta_Pv)\big).$$
We have
$$B_R(\Delta_Pv)(a)= \langle T_{a,R}, i \ddbar v\rangle = \langle i\ddbar T_{a,R},  v\rangle.$$
The last integral tends to 0 uniformly on $a$ since the mass of $\ddbar T_{a,R}$ satisfies this property.
Hence, $m_P\big(E_{\epsilon/2}(\Delta_Pv)\big)=0$.

On the other hand, by Lemma  \ref{lemma_limsup},  we  have 
\begin{eqnarray*}
m_P\big(E_{\epsilon/2}(u-\Delta_Pv)\big) &=&
m_P\big(\widetilde B(u-\Delta_P v)>\epsilon/2\big)\\
&\leq& 2c\epsilon^{-1}\|u-\Delta_Pv\|_{L^1(m_P)} \leq 2c\epsilon^{-1}\delta. 
\end{eqnarray*}
Since $\delta$  is  arbitrary,  we  deduce  from the last  estimate  that
$m_P\big(E_{\epsilon}(u)\big)=0$.
This completes the proof of the theorem.
\hfill  $\square$

\begin{remark} \rm
When $T$ is not extremal, for $u$ a function in $L^p(m_P)$, $1\leq p<\infty$, we still have the convergence of $B_Ru$ pointwise $m_P$-almost everywhere and also in $L^p(m_P)$ to a function which is constant on the leaves but not necessarily constant globally. This property can be deduced from Theorems \ref{th_birkhoff} and \ref{th_ergodic_Lp} using the decomposition of $T$ into extremal currents and that $B_R$ has norm 1 in $L^p(m_P)$.
\end{remark}

If $\phi_a:\D\rightarrow L_a$ is a universal covering map over a hyperbolic leaf $L_a$ with $\phi_a(0)=a$, 
define
$$T'_{a,R}:= (\phi_a)_* \big(\log^+ \frac{r}{|\zeta|}\big).$$
We have the following result which can be applied to foliations in $\P^k$ with linearizable isolated singularities.

\begin{theorem} Under the hypothesis of Theorem \ref{th_birkhoff}, we have the convergence 
$$\|T'_{a,R}\|^{-1} T'_{a,R}\to \|T\|^{-1} T$$
for almost every point $a\in M$ with respect to $T\wedge\omega$.
\end{theorem}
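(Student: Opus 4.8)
The plan is to reduce everything to Theorem~\ref{th_birkhoff} tested against a countable family of smooth $(1,1)$-forms. First observe that $T'_{a,R}=M_R\,T_{a,R}$, where $T_{a,R}:=M_R^{-1}(\phi_a)_*\big(\log^+\tfrac r{|\zeta|}\big)$, so that $\|T'_{a,R}\|^{-1}T'_{a,R}=\|T_{a,R}\|^{-1}T_{a,R}$ and it is enough to prove that $T_{a,R}\to T$ weakly and $\|T_{a,R}\|\to\|T\|$ for $m_P$-almost every $a$. Note also that $m_P=T\wedge\omega_P=4\vartheta\,(T\wedge\omega)$ and, since $T$ carries no mass on the parabolic leaves, where $\vartheta$ vanishes, the measures $m_P$ and $T\wedge\omega$ have the same negligible sets; hence ``$m_P$-almost everywhere'' coincides with ``$T\wedge\omega$-almost everywhere'', as in the statement.

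Next I would attach to a smooth $(1,1)$-form $\alpha$ on $M$ its leafwise ratio $u_\alpha$, the function characterised on the hyperbolic part of $X$ by $\alpha=u_\alpha\,\omega_P$ on the leaves; equivalently $u_\alpha=\widetilde u_\alpha/(4\vartheta)$, where $\widetilde u_\alpha:=\alpha/\omega$ is a bounded continuous function on $X$. Since $|u_\alpha|\,m_P=|\widetilde u_\alpha|\,(T\wedge\omega)$ and $T$ has finite total mass, $u_\alpha\in L^1(m_P)$ with $\|u_\alpha\|_{L^1(m_P)}\le\|\widetilde u_\alpha\|_\infty\|T\|$. Because $\alpha$ and $u_\alpha\omega_P$ have the same restriction to the leaves, and $T_{a,R}$ and $T$ are directed by the lamination, one gets
$$\langle T_{a,R},\alpha\rangle=\frac1{M_R}\int_{|\zeta|<1}\log^+\tfrac r{|\zeta|}\,(\phi_a)^*(u_\alpha\omega_P)=B_Ru_\alpha(a),\qquad \langle T,\alpha\rangle=\langle m_P,u_\alpha\rangle.$$
By Theorem~\ref{th_birkhoff} applied to $u_\alpha\in L^1(m_P)$, the left-hand side $B_Ru_\alpha(a)$ converges to $\langle m_P,u_\alpha\rangle=\langle T,\alpha\rangle$ for $m_P$-almost every $a$; denote by $N_\alpha$ the corresponding negligible set.

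Then I would fix a countable family $\{\alpha_j\}_{j\ge0}$ of smooth $(1,1)$-forms, with $\alpha_0:=\omega$, dense for the $\Cc^0$-norm among the $(1,1)$-forms supported in a fixed compact neighbourhood of $X$, and set $N:=\bigcup_j N_{\alpha_j}$, an $m_P$-negligible (hence $T\wedge\omega$-negligible) set. For $a\notin N$ one has $\langle T_{a,R},\alpha_j\rangle\to\langle T,\alpha_j\rangle$ as $R\to\infty$ for every $j$; taking $j=0$ gives $\|T_{a,R}\|=\langle T_{a,R},\omega\rangle\to\|T\|$, so $\sup_{R\ge R_0}\|T_{a,R}\|<\infty$ for some $R_0=R_0(a)$. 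Since $T_{a,R}$ and $T$ are positive of order $0$, for any $(1,1)$-form $\beta$ of class $\Cc^0$ and any $j$,
$$\big|\langle T_{a,R}-T,\beta\rangle\big|\le\big(\|T_{a,R}\|+\|T\|\big)\|\beta-\alpha_j\|_{\Cc^0}+\big|\langle T_{a,R}-T,\alpha_j\rangle\big|;$$
letting $R\to\infty$ and then $j\to\infty$ yields $\langle T_{a,R},\beta\rangle\to\langle T,\beta\rangle$, i.e. $T_{a,R}\to T$ weakly. Together with $\|T_{a,R}\|\to\|T\|>0$ (the mass $\|T\|=\langle T,\omega\rangle$ is positive because $m_P$ is a probability measure, so $T\ne0$) and $\|T'_{a,R}\|^{-1}T'_{a,R}=\|T_{a,R}\|^{-1}T_{a,R}$, this gives $\|T'_{a,R}\|^{-1}T'_{a,R}\to\|T\|^{-1}T$ for $T\wedge\omega$-almost every $a$.

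The only slightly delicate point is the integrability $u_\alpha\in L^1(m_P)$, which must hold in spite of $\omega_P$ being merely transversally measurable and unbounded near parabolic leaves and near $E$; this is precisely what the identity $|u_\alpha|\,m_P=|\widetilde u_\alpha|\,(T\wedge\omega)$ and the finiteness of the total mass of $T$ take care of, together with the remark that $m_P$ and $T\wedge\omega$ are mutually absolutely continuous. The remaining steps — from convergence on a countable dense family of test forms plus a uniform mass bound to weak convergence of currents — are routine.
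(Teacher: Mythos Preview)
Your proof is correct and follows essentially the same approach as the paper: write a smooth test form $\alpha$ as $u_\alpha\omega_P$ along the leaves, check that $u_\alpha\in L^1(m_P)$ because $\alpha\wedge T$ has finite mass, apply (the proof of) Theorem~\ref{th_birkhoff} to get $\langle T_{a,R},\alpha\rangle=B_Ru_\alpha(a)\to\langle m_P,u_\alpha\rangle=\langle T,\alpha\rangle$, take $\alpha=\omega$ for the mass, and pass to a countable dense family. Your write-up is in fact more careful than the paper's on the equivalence of $m_P$- and $T\wedge\omega$-null sets and on the density step.
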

\proof
Let $\alpha$ be a smooth $(1,1)$-form on $M$. We have to show that 
$$\|T'_{a,R}\|^{-1}\langle T'_{a,R},\alpha\rangle\to \|T\|^{-1}\langle T,\alpha\rangle$$
for almost every $a$ with respect to $T\wedge\omega$ or equivalently with respect to $m_P$ 
since $T$ has no mass on the set of parabolic leaves.
This property applied to a dense countable family of $\alpha$ gives the result. The restriction of $\alpha$ to the lamination can be written as $\alpha=\varphi\omega_P$ where $\varphi$ is a $L^1(m_P)$ because $\alpha\wedge T$ is a finite measure. 
Now, we have seen in the proof of Theorem \ref{th_birkhoff} that $\langle m_{a,R},\varphi\rangle$ converges to $\langle m_P,\varphi\rangle=\langle T,\alpha\rangle$. It follows that 
$$M_{R}^{-1}\langle T'_{a,R},\alpha\rangle\to \langle T,\alpha\rangle.$$
It remains to show that 
$$M_{R}^{-1}\|T'_{a,R}\|\rightarrow \|T\|.$$
But this is a consequence of the above convergence applied to $\omega$ instead of $\alpha$. 
\endproof

\begin{remark}\rm
We have seen that $\|T'_{a,R}\|\simeq M_{R}$ for almost every $a$ with respect to $T\wedge\omega$. The estimate implies that
$$\int_{r\D} |\phi_a'(z)|^2\log{r\over |z|} idz\wedge d\overline z \simeq \log {1\over 1-r}$$
which gives a quantitative information on the behavior of $\phi_a$ near the singularities. 
When the lamination admits no positive closed invariant current, it was shown in  \cite[Th. 5.3]{FornaessSibony1} that the left hand side integral tends to infinity for every $a$. 
\end{remark}

\small

\noindent
T.-C. Dinh, UPMC Univ Paris 06, UMR 7586, Institut de
Math{\'e}matiques de Jussieu, 4 place Jussieu, F-75005 Paris,
France.\\
{\tt  dinh@math.jussieu.fr}, {\tt http://www.math.jussieu.fr/$\sim$dinh}

\medskip

\noindent
V.-A.  Nguy{\^e}n, Math{\'e}matique-B{\^a}timent 425, UMR 8628, Universit{\'e} Paris-Sud,
91405 Orsay, France. {\tt VietAnh.Nguyen@math.u-psud.fr}

\medskip

\noindent
N. Sibony, Math{\'e}matique-B{\^a}timent 425, UMR 8628, Universit{\'e} Paris-Sud,
91405 Orsay, France. {\tt Nessim.Sibony@math.u-psud.fr}

\end{document}